\numberwithin{equation}{section}
\newtheorem{theorem}{Theorem}[section]
\newtheorem{lemma}[theorem]{Lemma}
\newtheorem{prop}[theorem]{Proposition}
\newtheorem{cor}[theorem]{Corollary}
\theoremstyle{remark}
\newtheorem{definition}[theorem]{Definition}
\newtheorem{example}[theorem]{Example}
\newtheorem{remark}[theorem]{Remark}
\newtheorem*{xrem}{Remark}
\newcommand{\bC}{\mathbb{C}}
\newcommand{\bN}{\mathbb{N}}
\newcommand{\bQ}{\mathbb{Q}}
\newcommand{\bP}{\mathbb{P}}
\newcommand{\bR}{\mathbb{R}}
\newcommand{\bZ}{\mathbb{Z}}
\newcommand{\cC}{\mathcal{C}}
\newcommand{\cO}{\mathcal{O}}
\newcommand{\cP}{\mathcal{P}}
\newcommand{\cR}{\mathcal{R}}
\newcommand{\cS}{\mathcal{S}}
\newcommand{\cT}{\mathcal{T}}
\newcommand{\cV}{\mathcal{V}}
\newcommand{\Cli}{\cC^{\textit{li}}}
\newcommand{\et}{\quad\text{and}\quad}
\newcommand{\oF}{\overline{F}}
\newcommand{\oG}{\overline{G}}
\newcommand{\oT}{\overline{T}}
\newcommand{\Qbar}{\bar{\bQ}}
\newcommand{\ssi}{\quad\Longleftrightarrow\quad}
\newcommand{\un}{\mathbf{n}}
\newcommand{\uu}{\mathbf{u}}
\newcommand{\ux}{\mathbf{x}}
\newcommand{\uy}{\mathbf{y}}
\newcommand{\uz}{\mathbf{z}}
\begin{document}

\baselineskip=15pt 

\title[Simultaneous rational approximation]
{Simultaneous approximation to a real number and to its cube by rational numbers}
\author[S. Lozier]{St\'ephane Lozier}
\author[D. Roy]{Damien ROY}
\address{
   D\'epartement de Math\'ematiques\\
   Universit\'e d'Ottawa\\
   585 King Edward\\
   Ottawa, Ontario K1N 6N5, Canada}
\email[St\'ephane Lozier]{slozi062@uottawa.ca}
\email[Damien Roy]{droy@uottawa.ca}
\subjclass[2010]{Primary 11J13; Secondary 11J04, 11J82}
\keywords{height, algebraic numbers, approximation to real numbers,
exponent of approximation, simultaneous approximation}
\thanks{Research partially supported by NSERC}

\begin{abstract}
It is known that, for each real number $\xi$ such that $1,\xi,\xi^2$ are linearly independent over $\bQ$, the uniform exponent of simultaneous approximation to $(1,\xi,\xi^2)$ by rational numbers is at most $(\sqrt{5}-1)/2 \cong 0.618$ and that this upper bound is best possible.  In this paper, we study the analogous problem for $\bQ$-linearly independent triples $(1,\xi,\xi^3)$, and show that, for these, the uniform exponent of simultaneous approximation by rational numbers is at most $2(9+\sqrt{11})/35 \cong 0.7038$.  We also establish general properties of the sequence of minimal points attached to such triples that are valid for smaller values of the exponent.
\end{abstract}


\maketitle

\section{Introduction}
\label{sec:intro}

In order to construct approximations to real numbers by algebraic integers of bounded degree, H.~Davenport and W.~M.~Schmidt were led to study, through a duality argument, the problem of uniform approximation by rational numbers to consecutive powers of real numbers \cite{DSb}.  To describe their result, although in a slightly weaker form, fix a positive integer $n$ and a point $\Xi=(\xi_0,\dots,\xi_n) \in \bR^{n+1}$ with $\xi_0\neq 0$.  We say that a real number $\lambda\ge 0$ is a \emph{uniform exponent of approximation} to $\Xi$ (by rational numbers) if there exists a constant $c=c(\Xi)>0$ such that the system of inequations
\[
 |x_0| \le X
 \et
 \max_{1\le i\le n} |x_0\xi_i-x_i\xi_0| \le cX^{-\lambda}
\]
admits a non-zero solution $\ux=(x_0,\dots,x_n)\in\bZ^{n+1}$ for each real number $X\ge 1$.  Let $\lambda(\Xi)$ denote the supremum of these exponents $\lambda$.  Then, Theorems 1a, 2a and 4a of \cite{DSb} can essentially be summarized as follows:

\begin{theorem}[Davenport and Schmidt, 1969]
Let $n\ge 2$ be an integer and let $\xi\in\bR$ such that the point $\Xi=(1,\xi,\dots,\xi^n) \in \bR^{n+1}$ has $\bQ$-linearly independent coordinates.  Then, we have
\begin{equation}
 \label{intro:eq:DS}
 \lambda(\Xi)
 \le
 \begin{cases}
 1/\gamma \cong 0.618 &\text{if $n=2$,}\\
  1/2 &\text{if $n=3$,}\\
  [n/2]^{-1} &\text{if $n\ge 4$,}
 \end{cases}
\end{equation}
where $\gamma=(1+\sqrt{5})/2$ denotes the golden ratio, and $[n/2]$ stands for the integer part of $n/2$.
\end{theorem}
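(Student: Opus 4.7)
The plan is to study, for each real number $X \geq 1$, a non-zero integer point $\ux_X = (x_0,\ldots,x_n) \in \bZ^{n+1}$ minimizing $L(\ux) := \max_{1\le i\le n}|x_0\xi_i - x_i|$ subject to $|x_0|\le X$. As $X$ grows, these \emph{minimal points} form a discrete sequence $(\ux_k)_{k\ge 1}$ with strictly increasing first-coordinate magnitudes $X_k := |x_{k,0}|$ and strictly decreasing approximation quantities $L_k := L(\ux_k)$; the assumption $\lambda(\Xi) > \lambda$ translates into $L_k \le c\,X_{k+1}^{-\lambda}$ for all sufficiently large~$k$. Proving an upper bound on $\lambda(\Xi)$ therefore reduces to showing that this sequence cannot grow too slowly: for some fixed~$m$, $X_{k+m}$ must exceed a prescribed power of~$X_k$.

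The central mechanism is to manufacture, from tuples of consecutive minimal points, non-zero integer quantities whose absolute value lies between $1$ and an explicit product of $X_j$'s and $L_j$'s; combining the resulting inequalities forces the desired growth. For $n \ge 4$, the required integers are $(n+1)\times(n+1)$ determinants built from $[n/2]+1$ consecutive minimal points together with auxiliary integer vectors obtained as $2\times 2$ Pl\"ucker-type minors of pairs of these points. A short rank argument produces enough such vectors to fill the ambient dimension $n+1$ regardless of any coincidental linear dependencies between successive $\ux_j$, and the resulting size inequalities give $\lambda \le 1/[n/2]$.

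For $n = 2$ and $n = 3$, the sharper bounds require exploiting the monomial relations $\xi_i\xi_j = \xi_{i+j}$ to build \emph{quadratic} integer invariants of pairs of minimal points, such as $x_0 y_2 + y_0 x_2 - 2 x_1 y_1$ for $n = 2$. A direct substitution $x_i = x_0\xi^i + O(L_k)$ shows these invariants have size of order $\max(X_k,X_\ell)\max(L_k,L_\ell)$ while remaining integers, and their non-vanishing ultimately rests on $\bQ$-linear independence of $1,\xi,\ldots,\xi^n$. For $n = 2$, performing a case analysis according to whether three consecutive minimal points $(\ux_{k-1},\ux_k,\ux_{k+1})$ span all of $\bR^3$, and in the degenerate case using the quadratic invariant to supply the missing constraint, produces a Fibonacci-type two-step recursion $\log X_{k+1} \ge \alpha\log X_k + \beta\log X_{k-1}$. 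Solving this together with $L_k \le c\,X_{k+1}^{-\lambda}$ forces $\lambda \le 1/\gamma$, and a shorter variant of the same scheme handles $n = 3$.

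The main obstacle is the case $n = 2$: naive three-point determinants alone do not yield a bound better than $\lambda \le 1$, so the golden-ratio bound depends crucially on the quadratic invariants and on the rank dichotomy for triples of consecutive minimal points. The delicate step is verifying non-vanishing of the constructed integer quantities under the $\bQ$-linear independence hypothesis and propagating the estimates through the two-step recursion without loss of constants. By contrast, the $n \ge 4$ bound is a comparatively clean consequence of multilinear algebra applied to the minimal-point sequence, and the $n = 3$ bound sits between the two regimes, requiring the quadratic invariants but no multi-step recursion.
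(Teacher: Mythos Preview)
The paper does not prove this theorem. It is stated in the introduction as a summary of Theorems~1a, 2a and 4a of Davenport and Schmidt~\cite{DSb}, serving only as background and motivation for the paper's own work on $\lambda(1,\xi,\xi^3)$. There is therefore no ``paper's own proof'' to compare your proposal against.

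That said, a brief remark on your sketch relative to the original Davenport--Schmidt arguments. Your high-level outline is broadly faithful to~\cite{DSb}: the minimal-point sequence, the use of determinants of blocks of consecutive minimal points for $n\ge 4$, and the polarized quadratic form $x_0y_2+x_2y_0-2x_1y_1$ together with a dichotomy on the rank of triples $(\ux_{k-1},\ux_k,\ux_{k+1})$ for $n=2$ are all ingredients of their proofs. However, what you have written is a plan, not a proof: the key non-vanishing assertions (that the relevant determinants and quadratic invariants are not identically zero along the sequence), the precise size estimates, and the passage from the two-step inequality to the bound $\lambda\le 1/\gamma$ are all asserted rather than carried out. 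In particular, for $n=2$ the Davenport--Schmidt argument does not proceed via a Fibonacci recursion on $\log X_k$ as you describe; it hinges on showing that the quadratic invariant $x_{k,0}x_{k,2}-x_{k,1}^2$ is eventually non-zero (their Lemma~2) and then combining the resulting lower bound $1\le |x_{k,0}x_{k,2}-x_{k,1}^2|\ll X_kL_k$ with the three-term determinant estimate at indices $k\in I$. If you intend to supply a self-contained proof, those steps need to be written out; as it stands the proposal is an accurate roadmap but not a proof.
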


The problem remains to determine, for each $n\ge 1$, the supremum $\lambda_n$ of $\lambda(1,\xi,\dots,\xi^n)$ as $\xi$ runs through all real numbers which are not algebraic over $\bQ$ of degree $\le n$.  By Dirichlet's theorem on simultaneous approximation \cite[Ch.~II, Thm.~1A]{Sc}, we know that $\lambda_n\ge 1/n$ for each $n\ge 1$.  When $n=1$, this estimate is sharp: we have $\lambda_1=1$ since $\lambda(1,\xi)=1$ for each $\xi\in\bR\setminus\bQ$.  However, it is shown in \cite{Rb} that $\lambda_2=1/\gamma >1/2$.  So, \eqref{intro:eq:DS} is optimal for $n=2$.  For larger integers $n$, the value of $\lambda_n$ is unknown, but there have been some recent improvements upon \eqref{intro:eq:DS}.  In \cite{La}, M.~Laurent proved that $\lambda_n\le \lceil n/2 \rceil^{-1}$ for each $n\ge 3$, where $\lceil n/2 \rceil$ denotes the smallest integer greater than or equal to $n/2$.  Moreover, it is shown in \cite{Rc} that $\lambda_3\le (1 + 2\gamma - \sqrt{1+4\gamma^2})/2 \cong 0.4245$.

One goal of the present paper is to prove the following result of similar nature.

\begin{theorem}
 \label{intro:thm:main}
Let $\xi\in\bR$ such that $1,\xi,\xi^3$ are linearly independent over $\bQ$.  Then, we have
\[
 \lambda(1,\xi,\xi^3) \le \mu := \frac{2(9+\sqrt{11})}{35} \cong 0.7038.
\]
\end{theorem}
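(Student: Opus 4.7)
The proof will proceed by contradiction through the method of minimal points, in the tradition of Davenport--Schmidt. Suppose, toward a contradiction, that there exists $\xi$ with $1, \xi, \xi^3$ linearly independent over $\bQ$ and $\lambda := \lambda(\Xi) > \mu$, where $\Xi = (1,\xi,\xi^3)$. Attach to $\Xi$ the canonical sequence of minimal points $(\mathbf{x}_i)_{i \ge 1} \subset \bZ^3 \setminus \{0\}$, indexed so that $X_i = |x_{i,0}|$ is strictly increasing to $\infty$ and $L_i = \max(|x_{i,0}\xi - x_{i,1}|,\, |x_{i,0}\xi^3 - x_{i,2}|)$ is strictly decreasing to $0$. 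The uniformity assumption yields $L_i \le c\, X_{i+1}^{-\lambda}$ for some $c>0$ and all $i$. Writing $\mathbf{x}_i = x_{i,0}\Xi + \mathbf{u}_i$ with $\mathbf{u}_i = (0, y_{i,1}, y_{i,2})$ and $|y_{i,j}| \le L_i$ sets the stage for evaluating algebraic expressions at $\mathbf{x}_i$.

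Three families of integer-valued quantities will be combined. First, the cubic form $P(\mathbf{x}) = x_1^3 - x_0^2 x_2$ vanishes at $\Xi$, and a Taylor expansion gives $|P(\mathbf{x}_i)| \ll X_i^2 L_i$; being an integer, $P(\mathbf{x}_i)$ is either zero or of absolute value at least $1$. Second, for indices $i<j<k$, the determinant $\det(\mathbf{x}_i, \mathbf{x}_j, \mathbf{x}_k)$, evaluated via column operations against $\Xi$, satisfies $|\det| \ll X_k L_i L_j$ and is at least $1$ in absolute value whenever the three points are linearly independent. Third, since $\lambda > 1/2$ forces any two consecutive minimal points to be $\bZ$-linearly independent, the cross product $\mathbf{x}_i \wedge \mathbf{x}_{i+1}$ is a non-zero integer vector of norm $\ll X_{i+1} L_i$ lying approximately in the plane orthogonal to $\Xi$. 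Combining just the first two bounds over consecutive triples yields only $\lambda \le \sqrt{3}-1 \cong 0.732$, which is too weak to recover $\mu$.

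To reach the sharper bound $\mu$, the strategy is to carry out a case analysis along two axes: whether $P(\mathbf{x}_i)$ vanishes, and whether three consecutive minimal points are linearly dependent. In the degenerate case $P(\mathbf{x}_i)=0$, the point lies on the rational cubic $x_1^3 = x_0^2 x_2$, whose integer points are parameterized up to scalars as $(a^3, a^2 b, b^3)$: the coordinates of $\mathbf{x}_i$ are then rigidly determined by a rational approximation $b/a$ of $\xi$, which can be chained across several indices. In the non-degenerate case, the bound $|P(\mathbf{x}_i)| \ge 1$ is directly available. The same dichotomy applied to linear (in)dependence of three consecutive minimal points yields specific algebraic identities among their coordinates. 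Tracking these constraints over a long window of indices will force $\lambda$ to satisfy the quadratic inequality $35\lambda^2 - 36\lambda + 8 \le 0$, whose larger root is precisely $\mu$, contradicting the assumption $\lambda > \mu$.

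The principal obstacle is the combinatorial bookkeeping in this last step: in scenarios where $P$ vanishes on several consecutive minimal points, or where linear dependences propagate, one must identify an auxiliary algebraic invariant --- for instance, a form built from wedge products of consecutive minimal points, or a suitable covariant of the cubic $P$ --- to supply the missing integer lower bound. Verifying that in every scenario the accumulated inequalities close up into the same quadratic constraint, and that no configuration yields a smaller bound, is where the delicate algebra sits. Once done, the contradiction with $\lambda > \mu$ completes the proof of Theorem~\ref{intro:thm:main}.
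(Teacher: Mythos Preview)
Your setup is correct and matches the paper: minimal points, the cubic form $\varphi(\ux)=x_0^2x_2-x_1^3$ (your $-P$), the determinant and cross-product estimates, and the observation that these alone give only $\lambda\le\sqrt{3}-1$. You also correctly identify the target quadratic $35\lambda^2-36\lambda+8=0$.

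However, the proposal is not a proof: the entire difficulty is left open. Your suggested case split---whether $P(\ux_i)=0$ and whether consecutive triples are dependent---is not the one that drives the argument. In fact the paper disposes of both cases almost immediately: if $\lambda>2/3$ then $\varphi(\ux_i)\neq 0$ for all large $i$ (so the branch ``$P$ vanishes on several consecutive minimal points'' is empty), and the set $I$ of indices where $\ux_{i-1},\ux_i,\ux_{i+1}$ are independent is infinite and is simply the indexing set one works with. Neither dichotomy produces the inequality you need.

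What is actually required, and what your outline does not supply, is a family of \emph{bihomogeneous} polynomials in two triples $(\ux,\uy)$ evaluated at consecutive pairs $(\ux_i,\ux_j)$ with $i<j$ in $I$, together with a divisibility mechanism. Concretely: writing $\ux_j=p_i\ux_i+q_i\ux_{i+1}$, one builds polynomials $P$ in a graded ring $\cR=\bQ[T,F,S^2V]$ (with $S=\varphi(\ux)$, $T=\Phi(\ux,\ux,\uy)$, etc.) that lie in ideals $J^{(k)}$ forcing $q_i^k\mid P(\ux_i,\ux_j)$. The paper constructs four specific such polynomials $F$, $D^{(2)}$, $D^{(3)}$, $D^{(6)}$, proves via separate arguments that each is nonzero at $(\ux_i,\ux_j)$ for large $i$ when $\lambda\ge 0.7034$, and the resulting bound $|q_i|^6\ll |T_iF_i^4|$ is what finally yields $35\lambda^2-36\lambda+8\le 0$. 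None of this machinery---the ring $\cR$, the ideals $J^{(k)}$, the integers $q_i$, or the non-vanishing proofs---appears in your proposal; you explicitly flag ``identifying an auxiliary algebraic invariant'' as the principal obstacle and leave it unresolved. That is precisely the content of the theorem.
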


This estimate refines the upper bound $\lambda(1,\xi,\xi^3) \le 5/7 \cong 0.714$ established by the first author in \cite[Thm.~10.5]{Lo}, but it is not best possible neither.  The method that we present in this paper is capable of lowering it, possibly down to $(1+3\sqrt{5})/11 \cong 0.7007$ but we have not been able to go so far.

Before saying a word on this method, we mention two ``generic'' consequences of Theorem \ref{intro:thm:main}.  The first one follows from a simple adaptation of the arguments of Davenport and Schmidt in \cite[\S2]{DSb}.  Upon defining the \emph{height} $H(\alpha)$ of an algebraic number $\alpha$ as the largest absolute value of the coefficients of its irreducible polynomial in $\bZ[T]$, it reads as follows.

\begin{cor}
Let $\xi\in\bR$ such that $1,\xi,\xi^3$ are linearly independent over $\bQ$ and let $\tau < 1+1/\mu \cong 2.421$.  Then, there exists infinitely many algebraic integers $\alpha$ which are roots of polynomials of the form $T^4+aT^3+bT+c$ in $\bZ[T]$ and satisfy $|\xi-\alpha|\le H(\alpha)^{-\tau}$.
\end{cor}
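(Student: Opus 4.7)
The corollary follows by adapting the transference argument of Davenport and Schmidt in \cite[\S 2]{DSb}: that argument converts an upper bound $\mu$ on $\lambda(1,\xi,\xi^2)$ into the existence of infinitely many algebraic integer approximations of $\xi$ with exponent $1+1/\mu$, via polynomials built from the sequence of minimal points. Here the analogous construction for $(1,\xi,\xi^3)$ should produce polynomials of the prescribed form $T^4+aT^3+bT+c$: the vanishing $T^2$-coefficient encodes the absence of $\xi^2$ from the triple, and the $T^4$-term arises through the identity $\xi\cdot\xi^3=\xi^4$, which replaces the role played by $\xi\cdot\xi^2=\xi^3$ in the classical setting.

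Fix $\tau < 1 + 1/\mu$ and choose $\lambda\in(\mu,\,1/(\tau-1))$. By Theorem~\ref{intro:thm:main}, $\lambda(1,\xi,\xi^3)\le\mu<\lambda$, so in the sequence of minimal points $\uu_k=(X_k,Y_k,Z_k)\in\bZ^3$ for $(1,\xi,\xi^3)$, ordered with $X_k>0$ increasing and $L_k:=\max(|Y_k-X_k\xi|,\,|Z_k-X_k\xi^3|)$ decreasing, one has $\limsup_k L_kX_{k+1}^\lambda=\infty$; in particular there are infinitely many indices $k$ for which $L_kX_{k+1}^\lambda$ is large. For each such $k$ I would construct an integer polynomial $P_k(T)=T^4+a_kT^3+b_kT+c_k$ from a suitable $\bZ$-linear combination involving $\uu_k$ (and, if necessary, $\uu_{k+1}$), using the primary identities $Y_k=X_k\xi+O(L_k)$ and $Z_k=X_k\xi^3+O(L_k)$ together with the derived approximation $Y_kZ_k=X_k^2\xi^4+O(X_kL_k)$ to supply the missing $\xi^4$-information. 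The integer coefficients $(a_k,b_k,c_k)$ must be chosen so that the $T^2$-coefficient vanishes identically and so that $H(P_k)$ and $|P_k(\xi)|$ satisfy sharp bounds in terms of the minimal-point data $(X_k,L_k,X_{k+1},L_{k+1})$.

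Given such $P_k$, the root $\alpha_k$ of $P_k$ closest to $\xi$ satisfies $|\xi-\alpha_k|\ll |P_k(\xi)|/|P_k'(\alpha_k)|$; classical estimates (using that for large $k$ the root $\alpha_k$ is simple and $|P_k'(\alpha_k)|$ is bounded below in terms of $H(P_k)$) then convert this into $|\xi-\alpha_k|\le H(\alpha_k)^{-(1+1/\lambda)}\le H(\alpha_k)^{-\tau}$. The hypothesis that $1,\xi,\xi^3$ are $\bQ$-linearly independent forces $P_k(\xi)\neq 0$ and prevents the $\alpha_k$ from accumulating at a single algebraic value, so $H(\alpha_k)\to\infty$ along the selected subsequence, yielding infinitely many distinct approximations. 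The main obstacle is the construction in the preceding paragraph: calibrating the integer combination so that the $T^2$-coefficient vanishes by design while both $H(P_k)$ and $|P_k(\xi)|$ remain simultaneously sharp. This is exactly the ``simple adaptation'' referred to in the paper, and amounts to transcribing the computations of \cite[\S 2]{DSb} with $\xi^2$ replaced by $\xi^3$ throughout, and with $\xi^4=\xi\cdot\xi^3$ used in place of $\xi^3=\xi\cdot\xi^2$ wherever the higher monomial enters.
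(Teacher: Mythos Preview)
The paper gives no proof of this corollary beyond the sentence ``follows from a simple adaptation of the arguments of Davenport and Schmidt in \cite[\S 2]{DSb}'', so your sketch is already more detailed than what the paper provides, and it is aimed in the right direction: you correctly identify that the missing $T^2$-coefficient reflects the absence of $\xi^2$ from the triple $(1,\xi,\xi^3)$, that the product identity $\xi\cdot\xi^3=\xi^4$ replaces $\xi\cdot\xi^2=\xi^3$ in the construction, and that the minimal-point sequence together with the failure of $\lambda$ to be a uniform exponent (your $\limsup L_kX_{k+1}^\lambda=\infty$) drives the argument. This is exactly the adaptation the paper has in mind.

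That said, your proposal is candid that the actual construction of $P_k$ is left open (``the main obstacle''), and the passage on extracting a root --- in particular the lower bound on $|P_k'(\alpha_k)|$ --- is asserted rather than argued. Both issues are genuinely handled in \cite[\S 2]{DSb}: there the polynomial is produced via a geometry-of-numbers argument in which the minimal point enters as one of the linear constraints (this is what forces the leading coefficient to be bounded, hence essentially monic), and the root-extraction step uses a careful case analysis rather than a bare mean-value estimate. So your sketch is a faithful high-level summary of the intended proof, but a reader carrying it out would still need to transcribe those two pieces from \cite{DSb} with $\xi^2\mapsto\xi^3$ and verify that nothing breaks; the paper simply asserts that nothing does.
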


The second consequence is a version of Gel'fond's transcendence criterion for lacunary polynomials.  It follows from a direct application of Jarn\'{\i}k's transference principle \cite[Thm.~1]{Ja}, and is in fact equivalent to Theorem \ref{intro:thm:main}.

\begin{cor}
Let $\xi\in\bR$ and let $\tau > 1/(1-\mu) \cong 3.376$.  Suppose that, for each sufficiently large real number $X$, there exists a non-zero polynomial $P(T) = aT^3+bT+c \in \bZ[T]$ with $\max\{|a|,|b|,|c|\} \le X$ and $|P(\xi)|\le X^{-\tau}$.  Then $1,\xi,\xi^3$ are linearly dependent over $\bQ$.
\end{cor}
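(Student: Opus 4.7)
The plan is to deduce this corollary from Theorem \ref{intro:thm:main} via Jarník's transference identity, as the authors suggest. I argue by contraposition: suppose $1,\xi,\xi^3$ are linearly independent over $\bQ$, and that for every sufficiently large $X$ there is a non-zero $P(T)=aT^3+bT+c\in\bZ[T]$ with $\max\{|a|,|b|,|c|\}\le X$ and $|P(\xi)|\le X^{-\tau}$. Setting $\Xi=(1,\xi,\xi^3)$, this hypothesis says precisely that the \emph{uniform dual exponent} $\hat\omega(\Xi)$ — the supremum of those $\omega\ge 0$ for which the system $\max\{|a|,|b|,|c|\}\le X$, $|a\xi^3+b\xi+c|\le X^{-\omega}$ admits a non-zero integer solution $(a,b,c)\in\bZ^3$ for every sufficiently large $X$ — satisfies $\hat\omega(\Xi)\ge\tau$.

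Next I would invoke Jarník's identity for the uniform simultaneous and dual exponents of a $\bQ$-linearly independent triple of real numbers: for $\Xi=(1,\xi,\xi^3)$,
\[
 \lambda(\Xi) \;=\; 1 - \frac{1}{\hat\omega(\Xi)}.
\]
This is the content of \cite{Ja} in the three-variable case (i.e.\ $n=2$ in the notation of the introduction), and applies here directly since $\Xi$ has three $\bQ$-linearly independent coordinates. Combining it with $\hat\omega(\Xi)\ge\tau$ and the hypothesis $\tau>1/(1-\mu)$ gives
\[
 \lambda(\Xi) \;\ge\; 1-\frac{1}{\tau} \;>\; 1-(1-\mu) \;=\; \mu,
\]
contradicting Theorem \ref{intro:thm:main}. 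Hence $1,\xi,\xi^3$ must be linearly dependent over $\bQ$.

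The only conceptual obstacle is quoting the correct form of Jarník's identity: one must be careful to use the \emph{uniform} exponents on both sides, since for the ordinary (non-uniform) exponents only an inequality of Khintchine type holds in general, and that inequality would not be strong enough to close the argument. Once the right transference identity is in hand, everything reduces to the one-line manipulation of inequalities performed above, which is precisely why the corollary is not merely a consequence of but equivalent to Theorem \ref{intro:thm:main}: the same identity, read in the opposite direction, converts a lower bound on $\lambda(\Xi)$ back into a lower bound on $\hat\omega(\Xi)$.
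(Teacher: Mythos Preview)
Your proof is correct and follows exactly the approach indicated by the paper, which does not give a detailed argument but simply states that the corollary follows from a direct application of Jarn\'{\i}k's transference principle \cite[Thm.~1]{Ja}. You have correctly identified that the relevant result is Jarn\'{\i}k's \emph{identity} $\lambda(\Xi)=1-1/\hat\omega(\Xi)$ for $\bQ$-linearly independent triples (rather than a mere Khintchine-type inequality), and the one-line manipulation with $\tau>1/(1-\mu)$ is precisely what is needed.
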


The search of an optimal upper bound for the values $\lambda(1,\xi,\xi^3)$ from Theorem \ref{intro:thm:main} fits in the following general framework.  Let $\cC$ be a closed algebraic subset of $\bP^n(\bR)$ of dimension one defined by homogeneous polynomials of $\bQ[x_0,\dots,x_n]$, and let $\Cli$ denote the set of points $P$ of $\cC$ whose representatives $\Xi = (\xi_0,\dots,\xi_n) \in \bR^{n+1}$ have  $\bQ$-linearly independent coordinates.  Since $\lambda(a\Xi)=\lambda(\Xi)$ for each $a\in\bR^*$, we may define $\lambda(P)=\lambda(\Xi)$ independently of the choice of $\Xi$.  Then, the question is to determine the least upper bound $\lambda(\cC)$ of the numbers $\lambda(P)$ with $P\in\Cli$.

For example, for a fixed integer $k\ge 2$, let $\cC_{1,k}$ denote the zero locus of the polynomial $x_0^{k-1}x_2-x_1^k$ in $\bP^2(\bR)$.  Then $\Cli_{1,k}$ consists of the points of $\bP^2(\bR)$ having a set of $\bQ$-linearly independent homogeneous coordinates of the form $(1,\xi,\xi^k)$, and so $\lambda(\cC_{1,k})$ is the supremum of the numbers $\lambda(1,\xi,\xi^k)$ with $\xi\in\bR$ and $1,\xi,\xi^k$ linearly independent over $\bQ$.  Then, for $k=2$, we have $\lambda(\cC_{1,2})=1/\gamma\cong 0.618$ by \cite[Thm.~1.1]{Rb}, while for $k=3$, the above Theorem \ref{intro:thm:main} gives $\lambda(\cC_{1,3})\le \mu \cong 0.7038$. In this context, it would be interesting to know if there exist curves $\cC$ for which $\lambda(\cC)$ is arbitrarily close to 1.

The proof of Theorem \ref{intro:thm:main} goes first by attaching to the triple $(1,\xi,\xi^3)$ a sequence of minimal points $(\ux_i)_{i\ge 1}$ from $\bZ^3$, as in \cite{DSb}.  A simple but crucial property of this sequence is that $\ux_{i-1}$, $\ux_i$ and $\ux_{i+1}$ are linearly independent for infinitely many indices $i\ge 2$.  For such $i$, let $j$ be the next integer with the same property. Then, the points $\ux_i,\ux_{i+1},\dots,\ux_j$ all lie in the same $2$-dimensional subspace of $\bR^3$.  Initially and for a long time, we tried to construct explicit auxiliary polynomials $P$ with integer coefficients vanishing at triples or even quadruples of these points, including points coming before $\ux_i$ or after $\ux_j$, but this soon became very complicated.  We will not mention these constructions here (except for the polynomial $g$ in Section \ref{sec:F}) because we discovered that it is in fact much more efficient to deal simply with the pairs $(\ux_i,\ux_j)$, provided that we take into account the content of their cross products $\ux_i\wedge\ux_j$, namely the gcd of its coordinates, denoted $|q_i|$ for an integer $q_i$ defined in Section \ref{sec:search}.  Assuming a lower bound $\lambda(1,\xi,\xi^3)>\lambda_0$, the idea is to construct polynomials $P\in\bZ[\ux,\uy]$ for which the integer $|P(\ux_i,\ux_j)|$ is relatively small for analytic reasons, and divisible by a certain power $q_i^k$ of $q_i$ for algebraic reasons.  If it happens that $|P(\ux_i,\ux_j)|<|q_i|^k$, then we conclude that $P(\ux_i,\ux_j)=0$.  On the other hand, if we can show that $P(\ux_i,\ux_j)\neq 0$ by some arithmetic argument, then we obtain $|q_i|^k \le |P(\ux_i,\ux_j)|$ which imposes constrains on the growth of the points $\ux_i$ and $\ux_j$.  The details concerning the construction of such polynomials are explained in Section \ref{sec:search}.

The most basic polynomial in this respect is $\varphi(\ux)=x_0^2x_2-x_1^3$, which defines the curve $\cC_{1,3}$.  In Section \ref{sec:prelim}, we show that, if $\lambda(1,\xi,\xi^3) > 2/3$, then $\varphi(\ux_i)\neq 0$ for each sufficiently large $i$.  Then, imitating the proof of Theorem 1a from \cite{DSb}, we conclude, as a first approximation, that $\lambda(1,\xi,\xi^3)\le \sqrt{3}-1 \cong 0.732$.  The next most important polynomial is $F$ introduced and studied in Section \ref{sec:F}. Then come $D^{(2)}$, $D^{(3)}$ and $D^{(6)}$ introduced in Section \ref{sec:search}, with the property that $D^{(k)}(\ux_i,\ux_j)$ is divisible by $q_i^k$ for each pair $(i,j)$ as above and $k=2,3,6$. They are the simplest polynomials that we found.  Assuming $\lambda(1,\xi,\xi^3)>0.6985$, it appears that, for $i$ large enough, none of them vanishes at the point $(\ux_i,\ux_j)$.  This is proved for $F$ in Section \ref{sec:NVF}, for $D^{(2)}$ in Section \ref{sec:NVD2}, and for both $D^{(3)}$ and $D^{(6)}$ in Section \ref{sec:NVD3D6}.  Then, the lower bound for $\lambda(1,\xi,\xi^3)$ given by Theorem \ref{intro:thm:main} is proved in Section \ref{sec:NVD3D6} on the basis of these non-vanishing results.

In Section \ref{sec:pol}, we show that, if $\lambda(1,\xi,\xi^3) > (1+3\sqrt{5})/11 \cong 0.7007$, then there exists a non-zero polynomial $P$ which vanishes at $(\ux_i,\ux_j)$ for infinitely many pairs $(i,j)$ as above.  This non-explicit construction suggests that we probably have $\lambda(1,\xi,\xi^3) \le (1+3\sqrt{5})/11$ because, if such a polynomial relation exists, we would expect it to be relatively simple, but we already ruled out the simplest ones.

All polynomials that we construct come from a graded factorial ring $\cR \subset \bQ[\ux,\uy]$ defined in Section \ref{sec:search}.  When $\lambda(1,\xi,\xi^3)>2/3$, it is shown in Section \ref{sec:search:remark} that any two relatively prime homogeneous elements of $\cR$ have only finitely many common zeros of the form $(\ux_i,\ux_j)$.  This suggests a natural way to avoid the delicate non-vanishing results.  In Section \ref{sec:special}, we give an example where this strategy applies.  However, in the general situation of Section \ref{sec:pol} we have not been able to put it in practice.

%
%

\section{Preliminaries}
\label{sec:prelim}

From now on, we fix a real number $\xi$ such that $1,\xi,\xi^3$ are linearly independent over $\bQ$.  For each $\ux=(x_0,x_1,x_2)\in\bR^3$, we set
\[
 \|\ux\| = \max\{|x_0|,|x_1|,|x_2|\},
 \quad
 L(\ux) = L_\xi(\ux) = \max\{|x_1-x_0\xi|,|x_2-x_0\xi^3|\}.
\]
We also fix choices of $\lambda>0$ and $c>0$ such that, for each sufficiently large real number $X$, there exists a non-zero point $\ux\in\bZ^3$ with
\begin{equation}
 \label{prelim:mainhyp}
 \|\ux\| \le X \et L(\ux) \le c X^{-\lambda}.
\end{equation}
Our goal is to show that $\lambda\le 2(9+\sqrt{11})/35$.  We proceed in several steps.  In what follows, whenever we use the Vinogradov symbols $\gg$, $\ll$, or their conjunction $\asymp$, the implied constants depend only on $\xi$, $\lambda$ and $c$.

We first note that an argument similar to that of Davenport and Schmidt in \cite[\S 3]{DSb} shows the existence of a sequence of non-zero points $(\ux_i)_{i\ge 1}$ in $\bZ^3$ such that, upon writing
\[
 X_i = \|\ux_i\| \et L_i = L(\ux_i)
\]
for each $i\ge 1$, we have
\begin{itemize}
 \item[a)] $X_1 < X_2 < X_3 < \cdots$,
 \item[b)] $1/2 > L_1 > L_2 > L_3 > \cdots$,
 \item[c)] if $L(\ux) < L_i$ for some $\ux\in\bZ\setminus\{0\}$ and some $i\ge 1$, then $\|\ux\|\ge X_{i+1}$.
\end{itemize}
Such a sequence is unique up to its first terms and up to multiplication of each of its terms by $\pm 1$ because, for $\ux,\uy\in\bZ$ with $L(\ux)=L(\uy)<1/2$, we have $\uy=\pm \ux$.  We call it a sequence of \emph{minimal points} for the triple $(1,\xi,\xi^3)$.  The construction of Davenport and Schmidt is slightly different in that they use the absolute value of the first coordinate of each point instead of its norm, but it can be checked that the resulting sequences are the same modulo the above equivalence relation.

Fix $(\ux_i)_{i\ge 1}$, $(X_i)_{i\ge 1}$ and $(L_i)_{i\ge 1}$ as above.  Then, our main hypothesis \eqref{prelim:mainhyp} is equivalent to
\begin{equation}
 \label{prelim:hypLiXip}
 L_i \le c X_{i+1}^{-\lambda}
\end{equation}
for each sufficiently large $i$.  Moreover, the points $\ux_i$ are primitive and so, they are two by two linearly independent over $\bQ$.  We write their coordinates in the form
\[
 \ux_i = (x_{i,0},x_{i,1},x_{i,2})\,.
\]
Then, for each $i\ge 1$, the condition $L_i<1/2$ from b) implies that $x_{i,0}\neq 0$.

\begin{definition}
For each $i\ge 1$, we denote by $W_i = \langle \ux_i,\ux_{i+1}\rangle_\bR$ the two-dimensional subspace of $\bR^3$ generated by $\ux_i$ and $\ux_{i+1}$.
\end{definition}

Following \cite[Ch.~I, \S 4]{Sc}, when $W$ is a subspace of $\bR^n$ generated by elements of $\bQ^n$, we define its \emph{height} $H(W)$ as the covolume (or determinant) of the lattice $W\cap \bZ^n$ in $W$.  The next lemma is a first step in estimating the height of the subspaces $W_i$ of $\bR^3$.

\begin{lemma}
\label{prelim:lemma:HWi}
For each $i\ge 1$, the set $\{\ux_i,\ux_{i+1}\}$ is a basis of $W_i\cap \bZ^3$ and we have $H(W_i) \asymp \|\ux_i\wedge\ux_{i+1}\|$.
\end{lemma}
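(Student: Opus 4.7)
The plan is to treat the basis assertion and the height estimate separately, with the second being essentially a routine consequence of the first combined with equivalence of norms.

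For the basis assertion, I would argue by contradiction. Since $\ux_i$ and $\ux_{i+1}$ are $\bR$-linearly independent, they form a $\bQ$-basis of the two-dimensional $\bQ$-vector space $W_i\cap\bQ^3$, so every element of $W_i\cap\bZ^3$ has a unique expression $a\ux_i+b\ux_{i+1}$ with $a,b\in\bQ$. The set
\[
 \Lambda = \{(a,b)\in\bQ^2 : a\ux_i+b\ux_{i+1}\in\bZ^3\}
\]
is a lattice in $\bR^2$ containing $\bZ^2$. The claim is equivalent to $\Lambda=\bZ^2$. If $\Lambda\supsetneq\bZ^2$, then the non-trivial finite group $\Lambda/\bZ^2$ admits a non-zero class with a representative $(a,b)\in\Lambda$ satisfying $|a|,|b|\le 1/2$ and $(a,b)\neq(0,0)$. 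The corresponding point $\uy=a\ux_i+b\ux_{i+1}$ is a non-zero element of $\bZ^3$ with
\[
 \|\uy\| \le |a|X_i+|b|X_{i+1} \le \tfrac{1}{2}(X_i+X_{i+1}) < X_{i+1}
\]
by property~a), and
\[
 L(\uy) \le |a|L_i+|b|L_{i+1} \le \tfrac{1}{2}(L_i+L_{i+1}) < L_i
\]
by property~b). But property~c) applied to $\uy$ then yields $\|\uy\|\ge X_{i+1}$, a contradiction.

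For the height estimate, once we know that $\{\ux_i,\ux_{i+1}\}$ is a $\bZ$-basis of $W_i\cap\bZ^3$, the covolume of this rank-two lattice in $W_i$ equals the Euclidean length of the wedge $\ux_i\wedge\ux_{i+1}$, viewed as an element of $\wedge^2\bR^3\cong\bR^3$. Since the paper's sup norm $\|\cdot\|$ on $\bR^3$ is equivalent to the Euclidean norm on this three-dimensional space, with absolute comparison constants, we obtain $H(W_i) \asymp \|\ux_i\wedge\ux_{i+1}\|$.

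There is no real obstacle here; both parts are standard. The only subtlety worth underlining is that both strict inequalities in the contradiction step rely exactly on the strict monotonicity built into the definition of a minimal point sequence ($X_i<X_{i+1}$ and $L_{i+1}<L_i$), without which the fundamental-domain argument would degenerate.
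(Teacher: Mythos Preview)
Your proof is correct and is essentially the standard Davenport--Schmidt argument that the paper invokes by citation: the paper does not write out a proof but refers to \cite[Lemma 2]{DSa} and \cite[Lemma 4.1]{Rb}, whose content is precisely the fundamental-domain/minimality contradiction you give, followed by the observation that $H(W_i)$ is the Euclidean norm of $\ux_i\wedge\ux_{i+1}$, hence comparable to its sup norm. You have simply made explicit what the references contain.
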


The first assertion follows by a direct adaptation of the arguments of Davenport and Schmidt in their proof of \cite[Lemma 2]{DSa} (see \cite[Lemma 4.1]{Rb}).  It implies that the cross product $\ux_i\wedge\ux_{i+1}$ is a primitive point of $\bZ^3$ and that $H(W_i)$ is its Euclidean norm.

The proof of the next lemma is similar to that of \cite[Lemma 4.1]{Rb} but requires some adjustments as its scope is more general.

\begin{lemma}
\label{prelim:lemma:xjwedgexi}
For any integers $1\le i<j$, we have $\|\ux_i\wedge\ux_j\| \asymp X_jL_i$.
\end{lemma}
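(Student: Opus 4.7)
The plan is to write each minimal point in the form $\ux_k = x_{k,0}\Xi + \epsilon_k$, where $\Xi = (1,\xi,\xi^3)$ and $\epsilon_k = (0, \epsilon_{k,1}, \epsilon_{k,2})$, so that $\|\epsilon_k\| = L_k$ by definition; the condition $L_k < 1/2$ forces $|x_{k,0}| \asymp X_k$. Expanding the wedge product by bilinearity and using $\Xi \wedge \Xi = 0$ gives the key identity
\[
 \ux_i \wedge \ux_j = -\Xi \wedge \mathbf{w} + \epsilon_i \wedge \epsilon_j,
 \qquad \mathbf{w} := x_{j,0}\epsilon_i - x_{i,0}\epsilon_j.
\]
Crucially, the same $\mathbf{w}$ also equals $x_{j,0}\ux_i - x_{i,0}\ux_j$, which is a nonzero integer vector (by linear independence of $\ux_i, \ux_j$) of the form $(0, w_1, w_2)$ with $w_k = x_{j,0}\epsilon_{i,k} - x_{i,0}\epsilon_{j,k}$.

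For the upper bound, the triangle inequality yields $\|\ux_i \wedge \ux_j\| \ll \|\mathbf{w}\| + L_iL_j \ll X_jL_i + X_iL_j + L_iL_j$. Since $X_iL_j < X_jL_i$ (from $X_i < X_j$ and $L_j < L_i$) and $L_iL_j \ll 1 \ll X_jL_i$ for $i$ large, this reduces to $\ll X_jL_i$.

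For the lower bound, observe that for any $\uv = (0, a, b)$ we have $\Xi \wedge \uv = (\xi b - \xi^3 a, -b, a)$, so $\|\Xi \wedge \uv\| \ge \|\uv\|$. Combined with $\|\epsilon_i \wedge \epsilon_j\| \ll L_iL_j$, the reverse triangle inequality gives $\|\ux_i \wedge \ux_j\| \ge \|\mathbf{w}\| - O(L_iL_j)$, so the task reduces to proving $\|\mathbf{w}\| \gg X_jL_i$. Choosing $k_0 \in \{1,2\}$ with $|\epsilon_{i,k_0}| = L_i$, we have
\[
 \|\mathbf{w}\| \ge |w_{k_0}| \ge |x_{j,0}|L_i - |x_{i,0}|L_j \gg X_jL_i - X_iL_j,
\]
which delivers the required bound provided the ratio $X_iL_j/(X_jL_i)$ stays bounded away from $1$.

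The main obstacle is the delicate regime where $X_iL_j$ is asymptotically comparable to $X_jL_i$, forcing both $X_j/X_i$ and $L_i/L_j$ close to $1$. This case is handled by exploiting the integer structure of $\mathbf{w}$ (so $\|\mathbf{w}\| \ge 1$) together with Minkowski-type inequalities such as $X_kL_k^2 \ll 1$ and the hypothesis $L_k \le c X_{k+1}^{-\lambda}$ with $\lambda > 0$, following the approach of \cite[Lemma~4.1]{Rb}, and yielding the required lower bound with implicit constants depending on $\xi$, $\lambda$, and $c$.
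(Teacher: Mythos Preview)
Your setup and upper bound are essentially identical to the paper's: you write $\ux_k=x_{k,0}\Xi+\epsilon_k$ (the paper writes $\Delta_k$ for your $\epsilon_k$), expand the wedge product, and isolate the vector $\mathbf{w}=x_{j,0}\epsilon_i-x_{i,0}\epsilon_j = x_{j,0}\ux_i-x_{i,0}\ux_j$. The paper likewise observes that two coordinates of $\ux_i\wedge\ux_j$ agree (up to sign) with coordinates of $\mathbf{w}$, giving $\|\ux_i\wedge\ux_j\|\ge\|\mathbf{w}\|$, and reduces to bounding $\|\mathbf{w}\|$ from below.

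The gap is in your ``delicate regime''. The tools you cite --- the integrality bound $\|\mathbf{w}\|\ge 1$, the Minkowski estimate $X_kL_k^2\ll 1$, and the hypothesis $L_k\le cX_{k+1}^{-\lambda}$ --- do not by themselves yield $\|\mathbf{w}\|\gg X_jL_i$. In fact, $\|\mathbf{w}\|\le |x_{j,0}|L_i+|x_{i,0}|L_j\ll X_jL_i$ shows $X_jL_i\gg 1$, not $\ll 1$, so the integrality bound is useless here; and nothing in your list forces $X_jL_i$ to be bounded in that regime. You also never invoke the defining property (c) of the sequence of minimal points, and that is precisely the missing ingredient.

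The paper's proof closes the gap as follows. Assuming (after normalizing $x_{\ell,0}>0$) that $\|\ux_i\wedge\ux_j\|<x_{j,0}L_i/3$, the reverse triangle inequality on $\mathbf{w}$ forces $x_{i,0}L_j>(2/3)x_{j,0}L_i$, hence $x_{j,0}<(3/2)x_{i,0}$. For $j$ large this gives $\|\ux_j-\ux_i\|<\|\ux_i\|$, and now \emph{minimality of $\ux_i$} yields $L(\ux_j-\ux_i)=\|\epsilon_j-\epsilon_i\|>L_i$. Rewriting $\mathbf{w}=-x_{i,0}(\epsilon_j-\epsilon_i)+(x_{j,0}-x_{i,0})\epsilon_i$ and applying the reverse triangle inequality again gives
\[
 \frac{x_{j,0}L_i}{3}>\|\mathbf{w}\|> x_{i,0}L_i-(x_{j,0}-x_{i,0})L_i=(2x_{i,0}-x_{j,0})L_i,
\]
which contradicts $x_{j,0}<(3/2)x_{i,0}$. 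So the contradiction comes from minimality, not from Minkowski or growth estimates; you should replace your final paragraph with this argument.
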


\begin{proof}
The estimate $\|\ux_i\wedge\ux_j\| \ll X_jL_i$ is a well-known fact which follows, for example, from \cite[Lemma 3.1(i)]{Rb}.  We claim that, for $1\le i<j$ with $j$ large enough, we have $\|\ux_i\wedge\ux_j\| \ge |x_{j,0}|L_i/3$.  As $|x_{j,0}|\asymp X_j$, this will suffice to complete the proof of the lemma.  To prove this claim, we set $\Xi=(1,\xi,\xi^3)$ and define $\Delta_\ell=\ux_\ell-x_{\ell,0}\Xi$ for each $\ell\ge 1$.  We also assume, as we may, that $x_{\ell,0}> 0$ for each $\ell\ge 1$.  Then, the last two coordinates of $x_{i,0}\Delta_j-x_{j,0}\Delta_i$ coincide, up to sign, with coordinates of $\ux_i\wedge\ux_j$ and so we have
\begin{equation}
 \label{prelim:lemma:xjwedgexi:eq}
 \|\ux_i\wedge\ux_j\| \ge \|x_{i,0}\Delta_j-x_{j,0}\Delta_i\|.
\end{equation}
Now, suppose that $\|\ux_i\wedge\ux_j\| < x_{j,0}L_i/3$.  Since $\|\Delta_\ell\| = L_\ell$ for each $\ell\ge 1$, we deduce from \eqref{prelim:lemma:xjwedgexi:eq} that $x_{i,0}L_j > (2/3)x_{j,0}L_i$, and so $x_{j,0}< (3/2)x_{i,0}$.  Assuming that $j$ is large enough, this implies that $\|\ux_j-\ux_i\| < \|\ux_i\|$.  Then, since $\ux_i$ is a minimal point, we conclude that $L_i < L(\ux_j-\ux_i) = \|\Delta_j-\Delta_i\|$ and the inequality \eqref{prelim:lemma:xjwedgexi:eq} yields
\[
 x_{j,0}L_i/3
   > \| x_{i,0}(\Delta_j-\Delta_i) - (x_{j,0}-x_{i,0})\Delta_i \|
   > x_{i,0}L_i - (x_{j,0}-x_{i,0})L_i
\]
in contradiction with $x_{j,0}< (3/2)x_{i,0}$.  Thus our hypothesis forces $j$ to be bounded.
\end{proof}

Combining the two previous results, we get:

\begin{cor}
\label{prelim:cor:HWi}
For any $i\ge 1$, we have $H(W_i) \asymp X_{i+1}L_i$.
\end{cor}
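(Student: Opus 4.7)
The plan is immediate: the corollary is the concatenation of the two preceding lemmas. First I would invoke Lemma \ref{prelim:lemma:HWi}, which furnishes the estimate
\[
 H(W_i) \asymp \|\ux_i \wedge \ux_{i+1}\|.
\]
Then I would apply Lemma \ref{prelim:lemma:xjwedgexi} in the special case $j = i+1$ (noting that the hypothesis $1 \le i < j$ is satisfied), which yields
\[
 \|\ux_i \wedge \ux_{i+1}\| \asymp X_{i+1} L_i.
\]
Combining these two asymptotic equivalences gives $H(W_i) \asymp X_{i+1} L_i$, as required.

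There is essentially no obstacle here — the corollary is pure bookkeeping, and the implied constants (depending only on $\xi$, $\lambda$, $c$) propagate through the two $\asymp$'s without difficulty, since the product of two two-sided asymptotic equivalences is again a two-sided asymptotic equivalence. The only mild caveat is that Lemma \ref{prelim:lemma:xjwedgexi} is stated ``for any integers $1 \le i < j$'' with the understanding (from its proof) that the lower bound really holds only for $j$ (hence $i+1$) large enough; but by adjusting the implicit constant one absorbs the finitely many remaining indices, which is standard and tacit in the use of the symbol $\asymp$ throughout the paper. Thus the proof is a single two-line chain of equivalences, and no further ingredient is needed.
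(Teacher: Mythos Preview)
Your proof is correct and is exactly the paper's approach: the paper simply states ``Combining the two previous results, we get'' before this corollary, referring to Lemmas \ref{prelim:lemma:HWi} and \ref{prelim:lemma:xjwedgexi} with $j=i+1$. Your remark about absorbing finitely many small indices into the implied constant is the standard tacit convention, as you note.
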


\begin{definition}
We denote by $I$ the set of all integers $i\ge 2$ such that $\ux_{i-1},\ux_i,\ux_{i+1}$ are linearly independent.
\end{definition}

The same argument as in the proof of \cite[Lemma 5]{DSb} shows that $I$ is an infinite set.  We order its elements by increasing magnitude.

A result of Schmidt \cite[Ch.~I, Lemma 8A]{Sc} shows that $H(S\cap T)H(S+T) \le H(S)H(T)$ for any pair of subspaces $S$ and $T$ of $\bR^n$ generated by elements of $\bQ^n$.  Applying it to the present situation, like in \cite[\S 3]{Rc}, we obtain:

\begin{prop}
\label{prelim:prop:height}
For any pair of consecutive elements $i<j$ in $I$, we have
\[
 X_j \ll (X_{i+1}X_{j+1})^{1-\lambda}
 \et
 1 \ll X_{j+1}L_jL_{j-1}.
\]
\end{prop}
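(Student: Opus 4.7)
The plan is to apply Schmidt's lemma $H(S\cap T)H(S+T)\le H(S)H(T)$ to the subspaces $W_{j-1}$ and $W_j$, and then read the two estimates off Corollary \ref{prelim:cor:HWi} in two different ways.

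First I would use the definition of $I$ to establish the identification $W_i=W_{i+1}=\cdots=W_{j-1}$. Indeed, since $i$ and $j$ are consecutive in $I$, none of the integers $k$ with $i<k<j$ belongs to $I$, which means that $\ux_{k-1},\ux_k,\ux_{k+1}$ are linearly dependent. Because $\ux_{k-1}$ and $\ux_k$ are linearly independent (minimal points are pairwise independent), this forces $\ux_{k+1}\in W_{k-1}$, hence $W_k=W_{k-1}$. Chaining these equalities yields $W_i=W_{j-1}$.

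Next, since $j\in I$, the points $\ux_{j-1},\ux_j,\ux_{j+1}$ are linearly independent, so $\ux_{j+1}\notin W_{j-1}$ and therefore $W_{j-1}\neq W_j$. Both being 2-dimensional rational subspaces of $\bR^3$, we get $W_{j-1}+W_j=\bR^3$ and $W_{j-1}\cap W_j$ is 1-dimensional; since $\ux_j$ lies in both, this intersection is exactly $\langle\ux_j\rangle_\bR$. As $\ux_j$ is a primitive element of $\bZ^3$ (being a minimal point), we have $H(\langle\ux_j\rangle_\bR)\asymp X_j$, and obviously $H(\bR^3)=1$. Schmidt's inequality then gives
\[
 X_j \;\ll\; H(W_{j-1})\,H(W_j).
\]

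Now the two estimates are obtained by expressing $H(W_{j-1})$ in two different ways via Corollary \ref{prelim:cor:HWi}. For the first inequality, I use $W_{j-1}=W_i$ so that $H(W_{j-1})=H(W_i)\asymp X_{i+1}L_i$, while $H(W_j)\asymp X_{j+1}L_j$. Combining with the hypothesis \eqref{prelim:hypLiXip}, namely $L_i\ll X_{i+1}^{-\lambda}$ and $L_j\ll X_{j+1}^{-\lambda}$, this yields $X_j\ll (X_{i+1}X_{j+1})^{1-\lambda}$. For the second inequality, I instead apply Corollary \ref{prelim:cor:HWi} directly at index $j-1$ to obtain $H(W_{j-1})\asymp X_jL_{j-1}$, so that $X_j\ll X_jL_{j-1}\cdot X_{j+1}L_j$, and dividing through by $X_j$ gives $1\ll X_{j+1}L_jL_{j-1}$.

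The proof is short and the only delicate ingredient is the identification $W_i=W_{j-1}$; everything else is a direct application of previously established results. There is no real obstacle: once one sees that one should feed $W_{j-1}$ and $W_j$ into Schmidt's lemma, the two desired bounds fall out immediately upon reading $H(W_{j-1})$ through the two available formulas.
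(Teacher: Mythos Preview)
Your proof is correct and follows essentially the same approach as the paper: apply Schmidt's inequality to the pair $W_{j-1}=W_i$ and $W_j$, identify $W_{j-1}\cap W_j=\langle\ux_j\rangle_\bR$ and $W_{j-1}+W_j=\bR^3$, and then read off the two estimates from Corollary~\ref{prelim:cor:HWi} using respectively $H(W_{j-1})=H(W_i)\asymp X_{i+1}L_i$ and $H(W_{j-1})\asymp X_jL_{j-1}$. Your explicit chaining argument for $W_i=\cdots=W_{j-1}$ is a welcome addition that the paper leaves implicit.
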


\begin{proof}
For such $i$ and $j$, we have $W_i\cap W_j = \langle \ux_j\rangle_\bR$ and $W_i+W_j = \bR^3$. Since $H(\langle \ux_j \rangle_\bR)$ is the Euclidean norm of $\ux_j$, we deduce from the inequality of Schmidt recalled above that $X_j \ll H(W_i) H(W_j)$.  The conclusion then follows from Corollary \ref{prelim:cor:HWi} using $L_i\ll X_{i+1}^{-\lambda}$ and $L_j\ll X_{j+1}^{-\lambda}$ to get the first estimate, and using $W_i = W_{j-1}$ to get the second one.
\end{proof}

An alternative proof of the second estimate goes by observing that the determinant of the three points $\ux_{j-1},\ux_j,\ux_{j+1}$ is non-zero and by estimating from above its absolute value as in \cite[Lemma 4]{DSb}.

\begin{cor}
\label{prelim:cor:approximable}
We have $X_j^{\lambda/(1-\lambda)} \ll X_{j+1}$ and $L_j \ll X_j^{-\lambda^2/(1-\lambda)}$ for any $j\in I$.  In particular, if $\xi$ or $\xi^3$ is badly approximable, then we must have $\lambda \le (\sqrt{5}-1)/2\cong 0.618$.
\end{cor}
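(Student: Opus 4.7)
The plan is to extract both estimates directly from Proposition \ref{prelim:prop:height} combined with the main hypothesis \eqref{prelim:hypLiXip}, and then to obtain the bound on $\lambda$ by pitting the second estimate against a one-dimensional lower bound on $L_j$. The key preliminary observation is that consecutive elements $i<j$ of $I$ automatically satisfy $i+1\le j$, hence $X_{i+1}\le X_j$. Substituting this into $X_j\ll (X_{i+1}X_{j+1})^{1-\lambda}$ collapses the right-hand side to $(X_jX_{j+1})^{1-\lambda}$; isolating $X_j$ then yields $X_j^\lambda \ll X_{j+1}^{1-\lambda}$, which is the first claim $X_j^{\lambda/(1-\lambda)}\ll X_{j+1}$.

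For the second estimate, I would simply feed this back into \eqref{prelim:hypLiXip} at index $j$, getting $L_j\le cX_{j+1}^{-\lambda}\ll X_j^{-\lambda^2/(1-\lambda)}$. For the ``in particular'' statement, suppose $\xi$ (or $\xi^3$) is badly approximable, so that there exists $c_0>0$ with $|q\eta-p|\ge c_0/|q|$ for every $(p,q)\in\bZ\times(\bZ\setminus\{0\})$, where $\eta$ is the relevant real. Applying this to the pair $(x_{j,1},x_{j,0})$ (resp.\ $(x_{j,2},x_{j,0})$) arising from a minimal point $\ux_j$, and using that $|x_{j,0}|\asymp X_j$ (which follows since $L_j\to 0$ forces the last two coordinates to be of the same order as $x_{j,0}\xi$ and $x_{j,0}\xi^3$), one obtains $L_j\gg X_j^{-1}$. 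Combining this with $L_j\ll X_j^{-\lambda^2/(1-\lambda)}$ along the infinite (hence unbounded) sequence $(X_j)_{j\in I}$ forces $\lambda^2/(1-\lambda)\le 1$, i.e.\ $\lambda^2+\lambda-1\le 0$, which gives $\lambda\le (\sqrt{5}-1)/2=1/\gamma$.

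There is no real obstacle in this corollary: once Proposition \ref{prelim:prop:height} is in hand, the first two estimates are a one-line algebraic manipulation, and the last deduction is standard. The only small point requiring care is the comparison $|x_{j,0}|\asymp X_j$ used at the end, which is immediate from $L_j<1/2$ and $x_{j,0}\ne 0$ for each $j\ge 1$.
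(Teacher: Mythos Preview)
Your argument is correct and matches the paper's proof essentially line for line: reduce via $X_{i+1}\le X_j$ in the first estimate of Proposition~\ref{prelim:prop:height}, feed the resulting bound on $X_{j+1}$ into $L_j\ll X_{j+1}^{-\lambda}$, and conclude by comparing with $L_j\gg X_j^{-1}$ in the badly approximable case. The only cosmetic omission is that your opening step presupposes $j$ has a predecessor $i$ in $I$; the paper disposes of the first element of $I$ explicitly, but of course a single index is absorbed by the implied constants.
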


\begin{proof}
We may assume that $j$ is not the first element of $I$.  Then, upon denoting by $i$ the preceding element of $I$, we have $X_{i+1}\le X_j$ and the first estimate of the proposition leads to $X_j^\lambda \ll X_{j+1}^{1-\lambda}$, thus $L_j \ll X_{j+1}^{-\lambda} \ll X_j^{-\lambda^2/(1-\lambda)}$.  This proves the first assertion.  Now, if $\xi$ or $\xi^3$ is badly approximable, then we also have $L_j\gg X_j^{-1}$ for each $j\ge 1$.  In particular, this holds for each $j\in I$, thus $\lambda^2/(1-\lambda) \le 1$ and so $\lambda \le (\sqrt{5}-1)/2$.
\end{proof}

\begin{cor}
\label{prelim:cor:XjLj}
Suppose that $\lambda\ge 2/3$.  Then, for each $j\in I$, we have $X_j^2\ll X_{j+1}$ and $L_j\ll L_{j-1}^2$.
\end{cor}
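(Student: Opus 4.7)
The plan is to deduce both bounds directly from Corollary \ref{prelim:cor:approximable} and Proposition \ref{prelim:prop:height}, using the key observation that the hypothesis $\lambda \ge 2/3$ forces the exponent $\lambda/(1-\lambda)$ to be at least $2$.

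For the first estimate, I would simply invoke Corollary \ref{prelim:cor:approximable}, which gives $X_j^{\lambda/(1-\lambda)} \ll X_{j+1}$ for $j \in I$. Since $\ux_j$ is a non-zero integer point, $X_j \ge 1$, and since $\lambda/(1-\lambda) \ge 2$ when $\lambda \ge 2/3$, we have $X_j^2 \le X_j^{\lambda/(1-\lambda)} \ll X_{j+1}$, as required.

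For the second estimate, I would combine the two facts we have about $L_j$: the defining inequality \eqref{prelim:hypLiXip}, which rewrites as $X_{j+1} \ll L_j^{-1/\lambda}$, and the second bound from Proposition \ref{prelim:prop:height}, namely $1 \ll X_{j+1} L_j L_{j-1}$. Substituting the first into the second gives
\[
 1 \ll L_j^{-1/\lambda} L_j L_{j-1} = L_j^{(\lambda-1)/\lambda} L_{j-1},
\]
so $L_j^{(1-\lambda)/\lambda} \ll L_{j-1}$, that is, $L_j \ll L_{j-1}^{\lambda/(1-\lambda)}$. Since property b) guarantees $L_{j-1} < 1/2 < 1$, and since $\lambda/(1-\lambda) \ge 2$, we conclude $L_{j-1}^{\lambda/(1-\lambda)} \le L_{j-1}^2$, hence $L_j \ll L_{j-1}^2$.

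The proof presents no real obstacle — it is a direct specialization of the previously established inequalities to the regime $\lambda \ge 2/3$. The only subtlety is remembering to use $X_j \ge 1$ when comparing powers of $X_j$ (so that a larger exponent gives a larger quantity) and to use $L_{j-1} < 1$ when comparing powers of $L_{j-1}$ (so that a larger exponent gives a smaller quantity); the hypothesis $\lambda \ge 2/3$ is precisely what makes these comparisons go in the direction needed.
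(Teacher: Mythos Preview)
Your proof is correct and follows essentially the same route as the paper's: both derive the first estimate directly from Corollary~\ref{prelim:cor:approximable} and the second by combining $X_{j+1}\ll L_j^{-1/\lambda}$ with the bound $1\ll X_{j+1}L_jL_{j-1}$ from Proposition~\ref{prelim:prop:height}. The only cosmetic difference is that the paper specializes $1/\lambda\le 3/2$ before substituting (getting $1\ll L_j^{-1/2}L_{j-1}$ directly), whereas you carry the general exponent through and specialize $\lambda/(1-\lambda)\ge 2$ at the end.
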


\begin{proof}
The first estimate follows directly from the preceding corollary.  Since $X_{j+1} \ll L_j^{-1/\lambda} \le L_j^{-3/2}$, the second estimate of Proposition \ref{prelim:prop:height} yields $1\ll L_j^{-1/2}L_{j-1}$ and so $L_j\ll L_{j-1}^2$.
\end{proof}

Up to now, all of the above applies not only to the triple $(1,\xi,\xi^3)$ but also to any $\bQ$-linearly independent triple of real numbers of the form $(1,\xi,\eta)$.  The polynomials that we now introduce are specific to the present study.

\begin{definition}
Let $\ux=(x_0,x_1,x_2)$, $\uy=(y_0,y_1,y_2)$ and $\uz=(z_0,z_1,z_2)$ be triples of indeterminates.  We set
\[
 \varphi(\ux) = x_0^2x_2-x_1^3
 \et
 \Phi(\ux,\uy,\uz) = x_0y_0z_2 + x_0y_2z_0 + x_2y_0z_0 - 3x_1y_1z_1.
\]
\end{definition}

The cubic form $\varphi$ satisfies $\varphi(1,\xi,\xi^3)=0$ and for that reason plays in the present context a role which is analog to that of the quadratic form $x_0x_2-x_1^2$ in \cite[\S 3]{DSb}.  The polynomial $\Phi$ is the symmetric trilinear form for which
\[
 \Phi(\ux,\ux,\ux) = 3 \varphi(\ux).
\]
The next result, analogous to Lemma 2 of \cite{DSb}, is the main result of this section.

\begin{theorem}
 \label{prelim:thm:phi}
Suppose that $\lambda>2/3$.  Then we have $\varphi(\ux_i)\neq 0$ for each sufficiently large index $i$.
\end{theorem}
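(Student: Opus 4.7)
The plan is to argue by contradiction. Suppose $\varphi(\ux_i)=0$ for some $i$. Since $\ux_i\in\bZ^3$ is primitive and $x_{i,0}^2x_{i,2}=x_{i,1}^3$, writing $x_{i,1}/x_{i,0}=p/q$ in lowest terms with $q>0$ forces (after possibly replacing $\ux_i$ by $-\ux_i$) $\ux_i=(q^3,pq^2,p^3)$; hence $X_i\asymp q^3$ and $L_i\asymp q^2|q\xi-p|$, so that $p/q$ is an excellent rational approximation to $\xi$.

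The central object will be the integer $N:=\Phi(\ux_i,\ux_i,\ux_{i+1})$. Using the explicit form of $\ux_i$, I compute
\[
\Phi(\ux_i,\ux_i,\uz)=q^3\bigl(q^3 z_2-3p^2q z_1+2p^3 z_0\bigr),
\]
so $q^3\asymp X_i$ divides $N$. Expanding $\ux_{i+1}=x_{i+1,0}\Xi+\Delta_{i+1}$ about $\Xi=(1,\xi,\xi^3)$ and exploiting the crucial factorization $\Phi(\ux_i,\ux_i,\Xi)=q^3(q\xi-p)^2(q\xi+2p)\asymp L_i^2$, while bounding $\Phi(\ux_i,\ux_i,\Delta_{i+1})=q^4(q^2d_{i+1,2}-3p^2d_{i+1,1})\ll X_i^2L_{i+1}$, gives the analytic estimate
\[
|N|\ll X_{i+1}L_i^2+X_i^2L_{i+1}.
\]

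I would then restrict to indices $i\in I$, an infinite set. If $N\ne 0$, then $|N|\ge q^3\asymp X_i$, so $X_i\ll X_{i+1}L_i^2+X_i^2L_{i+1}$. Corollary~\ref{prelim:cor:XjLj} gives $X_{i+1}\gg X_i^2$ (since $\lambda\ge 2/3$), and combined with $\lambda>1/2$ this shows $X_{i+1}L_i^2/X_i\ll X_i^{-2\lambda}$ and $X_iL_{i+1}\ll X_i\cdot X_{i+1}^{-\lambda}\ll X_i^{1-2\lambda}$, both tending to $0$ as $i\to\infty$, a contradiction.

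The trickier case will be $N=0$. Then $\ux_{i+1}$ lies in the plane $V:=\{\uz:2p^3z_0-3p^2qz_1+q^3z_2=0\}$, which is the tangent plane to the cone $\{\varphi=0\}$ at $\ux_i$; since $\ux_i\in V$ as well, $W_i=V$. The primitive normal to $V$ has Euclidean norm $\asymp q^3$, hence $H(V)\asymp X_i$, and comparing with Corollary~\ref{prelim:cor:HWi} ($H(W_i)\asymp X_{i+1}L_i$) forces the sharp relation $L_i\asymp X_i/X_{i+1}$, which together with $L_i\le cX_{i+1}^{-\lambda}$ gives $X_{i+1}\gg X_i^{1/(1-\lambda)}$. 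To close this subcase, I would iterate the previous analysis: taking the next index $j\in I$ after $i$, we have $\ux_{j+1}\notin W_i=V$, so $\Phi(\ux_i,\ux_i,\ux_{j+1})$ is a nonzero multiple of $q^3$, giving $X_i\ll X_{j+1}L_i^2+X_i^2L_{j+1}$. Combining this with the rapid growth $X_{i+1}\gg X_i^{1/(1-\lambda)}$, with Proposition~\ref{prelim:prop:height}, and with the fact that $\varphi(\ux_j)$ is also a nonzero integer (else the analysis applied simultaneously to $\ux_i$ and $\ux_j$ already gives a contradiction via the resulting bound on $\|\ux_i\wedge\ux_j\|$) should produce the required inconsistency.

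The main obstacle is clearly the case $N=0$: the tight relation $L_i\asymp X_i/X_{i+1}$ is a priori compatible with the other preliminary estimates, and ruling it out demands a careful chaining of the height bounds together with a judicious choice of auxiliary minimal point.
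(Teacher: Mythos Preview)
Your approach is much heavier than what is needed, and it leaves two genuine gaps. The paper's proof is a single paragraph with no case split and works for \emph{every} $i\ge 2$: writing $\ux_i=(p^3,p^2q,q^3)$ with $\gcd(p,q)=1$, it looks at $\un=\ux_{i-1}\wedge\ux_i$. Since $\ux_i$ is not orthogonal to $(0,1,0)$, the first or third coordinate of $\un$ is non-zero; but the first is a multiple of $q$ and the third a multiple of $p^2$, so $\|\un\|\ge\min\{|q|,p^2\}\gg X_i^{1/3}$. Lemma~\ref{prelim:lemma:xjwedgexi} gives $\|\un\|\asymp X_iL_{i-1}\ll X_i^{1-\lambda}$, whence $1/3\le 1-\lambda$, contradicting $\lambda>2/3$ for large $i$.

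The first gap in your argument is the restriction to $i\in I$. The theorem asserts $\varphi(\ux_i)\neq 0$ for \emph{all} large $i$, and there is no reason the hypothetical vanishing indices should lie in $I$; yet your $N\neq 0$ case genuinely needs Corollary~\ref{prelim:cor:XjLj} (i.e.\ $X_{i+1}\gg X_i^2$) to make $X_iL_{i+1}=o(1)$, and that corollary is only available for $i\in I$. The paper avoids this by using $\ux_{i-1}$ instead of $\ux_{i+1}$, so that only $L_{i-1}\ll X_i^{-\lambda}$ is needed. The second gap is that your $N=0$ case is not closed. The relation $L_i\asymp X_i/X_{i+1}$ you obtain is perfectly compatible with $L_i\ll X_{i+1}^{-\lambda}$; passing to $\Phi(\ux_i,\ux_i,\ux_{j+1})$ gives $X_i\ll X_{j+1}L_i^2+X_i^2L_{j+1}$, but you then need upper bounds on $X_{j+1}$ and lower bounds on $X_{j+2}$ that you never establish, and the parenthetical about $\varphi(\ux_j)$ is not an argument. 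The moral is that the polarized form $\Phi(\ux_i,\ux_i,\cdot)$ carries divisibility by $q^3\asymp X_i$, which is too coarse; the cross product carries divisibility by $\min(|p|^2,|q|)\asymp X_i^{1/3}$ in a single non-zero coordinate, and that is exactly the right scale to beat $X_i^{1-\lambda}$.
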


\begin{proof}  Suppose that $\varphi(\ux_i)=0$ for some integer $i\ge 2$.  Then, $\ux_i$ takes the form $\ux_i=(p^3,p^2q,q^3)$ for some non-zero coprime integers $p$ and $q$.  The vector $\un = \ux_{i-1}\wedge\ux_i$ is non-zero and orthogonal to $\ux_i$. However, as $p^2q\neq 0$, the vector $\ux_i$ is not orthogonal to $(0,1,0)$. So the first or the third coordinate of $\un$ is non-zero.  As the first coordinate of $\un$ is an integer multiple of $q$ and the third an integral multiple of $p^2$, this implies that $\|\un\| \ge \min\{|q|,p^2\} \gg X_i^{1/3}$.  On the other hand, Lemma \ref{prelim:lemma:xjwedgexi} gives $\|\un\| \asymp X_i L_{i-1} \ll X_i^{1-\lambda}$.  Combining these estimates gives $X_i^{1-\lambda} \gg X_i^{1/3}$ and so $i$ is bounded from above.
\end{proof}

\begin{definition}
We set $\Xi = (1,\xi,\xi^3)$ and, for each $i\ge 1$, we put
\[
 \delta(\ux_i) = \Phi(\ux_i,\Xi,\Xi) = 2x_{i,0}\xi^3 - 3x_{i,1}\xi^2 + x_{i,2}
 \et
 \delta_i = |\delta(\ux_i)|.
\]
\end{definition}

These quantities $\delta(\ux_i)$ are useful in dealing with limited developments of polynomials involving the function $\Phi$ as the next result illustrates.

\begin{cor}
\label{prelim:cor:est_varphi}
Suppose that $\lambda >2/3$.  For each sufficiently large $i$, we have
\[
 |\varphi(\ux_i)| \asymp X_i^2\delta_i
 \et
 X_i^{-2} \ll \delta_i \ll L_i.
\]
\end{cor}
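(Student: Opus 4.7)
The plan is to perform a Taylor-type expansion of $\varphi$ at the point $\Xi$, using the symmetric trilinear form $\Phi$ and the decomposition $\ux_i = x_{i,0}\Xi + \Delta_i$, where
\[
\Delta_i = \ux_i - x_{i,0}\Xi = \bigl(0,\, x_{i,1}-x_{i,0}\xi,\, x_{i,2}-x_{i,0}\xi^3\bigr)
\]
satisfies $\|\Delta_i\| = L_i$. Since $\Phi(\Xi,\Xi,\Xi) = 3\varphi(\Xi) = 0$ and $\Phi(\Delta_i,\Xi,\Xi) = \Phi(\ux_i,\Xi,\Xi) = \delta(\ux_i)$, expanding $3\varphi(\ux_i) = \Phi(\ux_i,\ux_i,\ux_i)$ by trilinearity gives
\[
\varphi(\ux_i) = x_{i,0}^2\,\delta(\ux_i) + x_{i,0}\,\Phi(\Delta_i,\Delta_i,\Xi) + \tfrac{1}{3}\Phi(\Delta_i,\Delta_i,\Delta_i).
\]
Since $|x_{i,0}|\asymp X_i$ and $\|\Delta_i\|=L_i$, the last two terms are $O(X_iL_i^2)$ and $O(L_i^3)$ respectively, so one obtains the key expansion
\[
\varphi(\ux_i) = x_{i,0}^2\,\delta(\ux_i) + O\bigl(X_iL_i^2 + L_i^3\bigr).
\]

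The upper bound $\delta_i \ll L_i$ is immediate from the algebraic identity
\[
\delta(\ux_i) = (x_{i,2}-x_{i,0}\xi^3) - 3\xi^2(x_{i,1}-x_{i,0}\xi),
\]
whose two summands are each $O(L_i)$. For the lower bound $X_i^{-2} \ll \delta_i$, I would invoke Theorem~\ref{prelim:thm:phi}: for $i$ large, $\varphi(\ux_i)\in\bZ\setminus\{0\}$, so $|\varphi(\ux_i)|\ge 1$. From $L_i \le cX_{i+1}^{-\lambda} \le cX_i^{-\lambda}$, the hypothesis $\lambda > 2/3$ gives $X_iL_i^2 \ll X_i^{1-2\lambda}\to 0$ and $L_i^3 \ll X_i^{-3\lambda}\to 0$, so the error in the key expansion is at most $1/2$ for $i$ large. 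This forces $|x_{i,0}|^2 \delta_i \ge 1/2$, i.e.\ $\delta_i \gg X_i^{-2}$.

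The equivalence $|\varphi(\ux_i)| \asymp X_i^2\delta_i$ is then a direct byproduct of the key expansion: the main term $x_{i,0}^2\,\delta(\ux_i)$ has absolute value $\asymp X_i^2\delta_i$, which is bounded below by a positive constant, while the error is $o(1)$ and hence negligible by comparison, yielding both $|\varphi(\ux_i)| \ll X_i^2\delta_i$ and $|\varphi(\ux_i)| \gg X_i^2\delta_i$. The argument is essentially mechanical once the key expansion is in hand; the one point demanding a bit of care is that the single hypothesis $\lambda > 2/3$ is being used twice---once to apply Theorem~\ref{prelim:thm:phi} (supplying the non-vanishing of $\varphi(\ux_i)$) and once to drive the quadratic error $X_iL_i^2$ below $1$ in the integrality argument. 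I do not foresee a deeper obstacle.
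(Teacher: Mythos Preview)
Your proof is correct and follows essentially the same approach as the paper: the same decomposition $\ux_i = x_{i,0}\Xi + \Delta_i$, the same trilinear expansion of $\varphi(\ux_i)$ via $\Phi$, and the same use of Theorem~\ref{prelim:thm:phi} together with $X_iL_i^2 = o(1)$ to conclude. The only cosmetic differences are that you track the $L_i^3$ term separately (the paper absorbs it into $O(X_iL_i^2)$ since $L_i<1$) and you write out the identity $\delta(\ux_i) = (x_{i,2}-x_{i,0}\xi^3) - 3\xi^2(x_{i,1}-x_{i,0}\xi)$ explicitly rather than noting $\delta(\ux_i)=\Phi(\Delta_i,\Xi,\Xi)=O(L_i)$.
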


\begin{proof}
Put $\Delta_i=\ux_i-x_{i,0}\Xi$, so that $L_i=\|\Delta_i\|$.  By the multilinearity of $\Phi$ and the fact that $\Phi(\Xi,\Xi,\Xi)=3\varphi(\Xi)=0$, we find
\[
 \begin{aligned}
 \delta(\ux_i)
  &= \Phi(\Delta_i,\Xi,\Xi) = \cO(L_i),\\
 \varphi(\ux_i)
  &= x_{i,0}^2\Phi(\Delta_i,\Xi,\Xi) + x_{i,0}\Phi(\Delta_i,\Delta_i,\Xi) + \varphi(\Delta_i)
   = x_{i,0}^2\delta(\ux_i) + \cO(X_iL_i^2).
 \end{aligned}
\]
For all sufficiently large values of $i$, Theorem \ref{prelim:thm:phi} gives $\varphi(\ux_i)\neq 0$.  As $\varphi(\ux_i)$ is an integer and as $X_iL_i^2\ll X_i^{1-2\lambda}=o(1)$, we deduce that $|\varphi(\ux_i)| \asymp X_i^2\delta_i$ for each $i$ with $\varphi(\ux_i)\neq 0$ and so, $\delta_i\gg X_i^{-2}$ for the same values of $i$.
\end{proof}

\begin{cor}
 \label{prelim:cor:first_est_lambda}
We have $\lambda \le \sqrt{3}-1 \cong 0.732$.
\end{cor}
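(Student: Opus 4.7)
The plan is to finish a Davenport--Schmidt style argument using the cubic form $\varphi$, now that Theorem \ref{prelim:thm:phi} supplies the needed non-vanishing.

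First, I would dispatch the case $\lambda \le 2/3$ trivially: since $2/3 < \sqrt{3}-1$, there is nothing to do in that range, so I may assume $\lambda > 2/3$ from here on. Next, I would pick an index $j \in I$ large enough that both Theorem \ref{prelim:thm:phi} and Corollary \ref{prelim:cor:est_varphi} apply at $\ux_j$; since $I$ is infinite, such $j$ exist. By Theorem \ref{prelim:thm:phi}, $\varphi(\ux_j)$ is a non-zero integer, whence $|\varphi(\ux_j)| \ge 1$, and Corollary \ref{prelim:cor:est_varphi} then gives
\[
 1 \le |\varphi(\ux_j)| \asymp X_j^2\, \delta_j \ll X_j^2\, L_j,
\]
which yields the key lower bound $L_j \gg X_j^{-2}$.

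The second step would be to match this against the upper bound $L_j \ll X_j^{-\lambda^2/(1-\lambda)}$ from Corollary \ref{prelim:cor:approximable}, which also applies because $j \in I$. Comparing the two estimates forces $\lambda^2/(1-\lambda) \le 2$, equivalently $\lambda^2 + 2\lambda - 2 \le 0$, whose positive root is $\sqrt{3}-1$; hence $\lambda \le \sqrt{3}-1$, as claimed.

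I do not expect any real obstacle here: the substantive work has already been packaged into Theorem \ref{prelim:thm:phi} (which drives the lower bound on $L_j$) and into the height estimates behind Proposition \ref{prelim:prop:height} (which drive the upper bound). The only minor bookkeeping point is that the upper estimate on $L_j$ demands an index belonging to $I$, but since $I$ is an infinite set this is immediate.
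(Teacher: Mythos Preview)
Your proof is correct and follows essentially the same route as the paper: both combine the lower bound $1\ll X_j^2 L_j$ coming from Corollary~\ref{prelim:cor:est_varphi} (under the harmless assumption $\lambda>2/3$) with the growth estimate of Corollary~\ref{prelim:cor:approximable} for $j\in I$ to obtain $\lambda^2\le 2(1-\lambda)$. The only cosmetic difference is that the paper passes through $X_{j+1}$ (writing $X_{j+1}^\lambda\ll X_j^2$ and $X_j^\lambda\ll X_{j+1}^{1-\lambda}$) whereas you invoke the $L_j\ll X_j^{-\lambda^2/(1-\lambda)}$ form of Corollary~\ref{prelim:cor:approximable} directly.
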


\begin{proof}
By Corollary \ref{prelim:cor:est_varphi}, we have $1 \ll X_j^2\delta_j \ll X_j^2L_j$ for each large enough index $j$.  As $L_j\ll X_{j+1}^{-\lambda}$, this gives $X_{j+1}^{\lambda} \ll X_j^2$.  When $j\in I$, Corollary \ref{prelim:cor:approximable} also gives $X_j^\lambda \ll X_{j+1}^{1-\lambda}$.  Combining the two estimates and letting $j$ go to infinity within $I$, we conclude that $\lambda^2\le 2(1-\lambda)$ and so $\lambda \le \sqrt{3}-1$.
\end{proof}

As we will see this upper bound is not optimal and we will improve it in what follows.  We end this section with a general estimate which will be useful for this purpose.

\begin{prop}
\label{prelim:prop:est_Phi}
Suppose that $\lambda>2/3$.  For any choice of integers $i\le j\le k$ with $i\in I$ large enough, we have $\Phi(\ux_i,\ux_j,\ux_k)\neq 0$ and
\[
 |\Phi(\ux_i,\ux_j,\ux_k)| \asymp X_jX_k\delta_i \asymp \frac{X_jX_k}{X_i^2} |\varphi(\ux_i)|.
\]
\end{prop}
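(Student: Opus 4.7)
The plan is to expand $\Phi(\ux_i,\ux_j,\ux_k)$ by multilinearity with respect to the decomposition $\ux_\ell = x_{\ell,0}\Xi+\Delta_\ell$, where $\|\Delta_\ell\|=L_\ell$ for $\ell\in\{i,j,k\}$. Using the vanishings $\Phi(\Xi,\Xi,\Xi)=3\varphi(\Xi)=0$ and the identity $\Phi(\Delta_\ell,\Xi,\Xi)=\delta(\ux_\ell)$, this yields
\[
 \Phi(\ux_i,\ux_j,\ux_k) = x_{j,0}x_{k,0}\,\delta(\ux_i) + x_{i,0}x_{k,0}\,\delta(\ux_j) + x_{i,0}x_{j,0}\,\delta(\ux_k) + E,
\]
where $E$ is a sum of monomials each of which contains at least two factors among $\Delta_i,\Delta_j,\Delta_k$. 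Combining $|x_{\ell,0}|\asymp X_\ell$ with the orderings $X_i\le X_j\le X_k$ and $L_k\le L_j\le L_i$ gives $|E|\ll X_kL_iL_j$.

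Under $\lambda>2/3$, for $i\in I$ sufficiently large Theorem~\ref{prelim:thm:phi} and Corollary~\ref{prelim:cor:est_varphi} deliver $\delta_i\gg X_i^{-2}$, while Corollary~\ref{prelim:cor:approximable} provides $L_i\ll X_i^{-\lambda^2/(1-\lambda)}$ and $X_{i+1}\gg X_i^{\lambda/(1-\lambda)}$. A direct computation then gives
\[
 \frac{|E|}{X_jX_k\delta_i}\ll\frac{L_i^2X_i^2}{X_j}\ll X_i^{2-2\lambda^2/(1-\lambda)}=o(1),
\]
the exponent being negative because $\lambda>(\sqrt{5}-1)/2$. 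For the second main term, if $j>i$ the estimates $\delta_j\ll L_j\le L_i\ll X_i^{-\lambda^2/(1-\lambda)}$ and $X_j\ge X_{i+1}\gg X_i^{\lambda/(1-\lambda)}$ yield
\[
 \frac{X_iX_k\delta_j}{X_jX_k\delta_i}\ll X_i^{3-(\lambda^2+\lambda)/(1-\lambda)}=o(1),
\]
since $\lambda^2+4\lambda>3$ precisely when $\lambda>-2+\sqrt{7}$, a condition ensured by $\lambda>2/3$. The same argument controls the third main term when $k>i$.

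These estimates already give the upper bound $|\Phi(\ux_i,\ux_j,\ux_k)|\ll X_jX_k\delta_i$. For the matching lower bound I will distinguish the configurations $i=j=k$, $i=j<k$, $i<j=k$ and $i<j<k$. The crux is that when $j=i$ (resp.\ $k=i$) the term $x_{i,0}x_{k,0}\delta(\ux_j)$ (resp.\ $x_{i,0}x_{j,0}\delta(\ux_k)$) coincides with the first main term $x_{j,0}x_{k,0}\delta(\ux_i)$ as an expression, so the two add constructively rather than possibly cancel, while any remaining main term with index strictly greater than $i$ is $o(X_jX_k\delta_i)$ by the estimates above. The diagonal case $i=j=k$ is directly handled by $\Phi(\ux_i,\ux_i,\ux_i)=3\varphi(\ux_i)$ together with Corollary~\ref{prelim:cor:est_varphi}. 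In every configuration we obtain $|\Phi(\ux_i,\ux_j,\ux_k)|\asymp X_jX_k\delta_i$, and in particular $\Phi(\ux_i,\ux_j,\ux_k)\ne 0$. The equivalent form $|\Phi|\asymp(X_jX_k/X_i^2)|\varphi(\ux_i)|$ follows from $|\varphi(\ux_i)|\asymp X_i^2\delta_i$.

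The main obstacle is bookkeeping: one must keep track of which "main" terms become identical as expressions in the boundary cases (so they add rather than cancel), and verify the quantitative exponent inequality $\lambda^2+4\lambda>3$ underlying the smallness claims. No new ingredient beyond Theorem~\ref{prelim:thm:phi} and the corollaries of this section is needed once the expansion is laid out.
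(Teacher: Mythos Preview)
Your proof is correct and follows essentially the same approach as the paper: expand $\Phi$ by multilinearity around $\Xi$, isolate the three $\delta$-terms plus an error $\cO(X_kL_iL_j)$, and show that when $i\in I$ the first term dominates (with constructive coincidence in the boundary cases $j=i$ or $k=i$). The only cosmetic difference is that the paper draws on Corollary~\ref{prelim:cor:XjLj} (so $X_i^2\ll X_{i+1}$ and $L_i\ll X_i^{-4/3}$) whereas you use Corollary~\ref{prelim:cor:approximable} (so $X_i^{\lambda/(1-\lambda)}\ll X_{i+1}$ and $L_i\ll X_i^{-\lambda^2/(1-\lambda)}$); both sets of estimates suffice once $\lambda>2/3$.
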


\begin{proof}
In view of Corollary \ref{prelim:cor:est_varphi}, it suffices to show that $|\Phi(\ux_i,\ux_j,\ux_k)| \asymp X_jX_k\delta_i$ for all triples of integers $i\le j\le k$ with $i\in I$ large enough.  Using the multilinearity of the function $\Phi$ as in the proof of Corollary \ref{prelim:cor:est_varphi}, we find
\[
 \Phi(\ux_i,\ux_j,\ux_k)
   = x_{j,0}x_{k,0}\delta(\ux_i) + x_{i,0}x_{k,0}\delta(\ux_j) + x_{i,0}x_{j,0}\delta(\ux_k) + \cO(X_kL_iL_j),
\]
assuming only $1\le i\le j\le k$.  In the right hand side of this equality, the first three summands may not be distinct.  To conclude, we simply need to show that, when $i\in I$, we have $X_i\delta_j = o(X_j\delta_i)$ if $i<j$, $X_i\delta_k = o(X_k\delta_i)$ if $i<k$, and $L_iL_j = o(X_j\delta_i)$.  The estimates for $\delta_i,\delta_j,\delta_k$ provided by Corollary \ref{prelim:cor:est_varphi} reduce the problem to showing
\[
 X_iL_j = o(X_jX_i^{-2}) \ \text{if $i<j$,}
 \ \
 X_iL_k = o(X_kX_i^{-2}) \ \text{if $i<k$,}
 \ \
 L_iL_j = o(X_jX_i^{-2}).
\]
The third estimate is clear because $L_iL_j \le L_i^2 \ll X_i^{-4/3}$ and $X_jX_i^{-2} \ge X_i^{-1}$.  To prove the first estimate, we note that, by Corollary \ref{prelim:cor:XjLj}, we have $X_i^2 \ll X_{i+1}$ and $L_i\ll L_{i-1}^2 \ll X_i^{-4/3}$ since $\lambda>2/3$ and $i\in I$.  Thus, when $i<j$, we obtain $X_iL_j \le X_iL_i = o(1)$ while $X_jX_i^{-2} \ge X_{i+1}X_i^{-2} \gg 1$.  The proof of the second estimate is the same.
\end{proof}

%
%

\section{The polynomial $F$ and the point $\psi$}
\label{sec:F}

We introduce two new actors in the present study.

\begin{definition}
For triples of indeterminates $\ux$ and $\uy$, we set:
\begin{align*}
 F(\ux,\uy)
   &= \Phi(\ux,\ux,\uy)^2 - 4\varphi(\ux)\Phi(\ux,\uy,\uy),\\
  \psi(\ux,\uy)
   &= \Phi(\ux,\ux,\uy)\ux-2\varphi(\ux)\uy.
\end{align*}
\end{definition}

This section is devoted to estimates for $F$.  In particular, we will show that $|F(\ux_i,\ux_j)|$ is a relatively small integer for each pair of consecutive indices $i<j$ in $I$, and that, if it is non-zero for infinitely many such pairs, then $\lambda>5/7$.  We also provide estimates for $\psi$ which we view as an analog of the point $[\ux,\ux,\uy]$ defined in \cite[\S2]{Rb}, which plays a crucial role in the study \cite{Rb} of simultaneous approximation to a real number and to its square (see \cite[Cor.~5.2]{Rb}).  The polynomial $F$ however has no analog in \cite{Rb}.

Recalling that $\Xi=(1,\xi,\xi^3)$ and using the notation of Section 2 for the coordinates of points, we first establish the following formulas.

\begin{lemma}
\label{F:lemma:Fpsi}
We have the identities
\begin{itemize}
 \item[(i)] $F(\ux,\Xi) = F(\ux-x_0\Xi,\Xi)$,
 \item[(ii)] $\varphi(\psi(\ux,\uy)) = -\varphi(\ux)\Phi(\ux,\ux,\uy)F(\ux,\uy) - 8\varphi(\ux)^3\varphi(\uy)$.
\end{itemize}
\end{lemma}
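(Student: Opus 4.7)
The plan is to prove both identities by direct polynomial expansion, exploiting only the trilinearity and symmetry of $\Phi$ together with the relations $\varphi(\uu) = \tfrac{1}{3}\Phi(\uu,\uu,\uu)$ and $\varphi(\Xi) = 0$ (equivalently, $\Phi(\Xi,\Xi,\Xi) = 0$). Both assertions are purely formal identities in the coordinates of $\ux$ and $\uy$; no analytic or arithmetic input is needed.

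For part (i), I would set $\Delta = \ux - x_0\Xi$ and adopt the abbreviations $A := \Phi(\Delta,\Xi,\Xi)$, $B := \Phi(\Delta,\Delta,\Xi)$, $C := \varphi(\Delta)$. Expanding $\ux = \Delta + x_0\Xi$ by trilinearity and using $\Phi(\Xi,\Xi,\Xi) = 0$, the three quantities entering $F(\ux,\Xi)$ become
\begin{align*}
\Phi(\ux,\ux,\Xi) &= B + 2x_0 A, \\
\Phi(\ux,\Xi,\Xi) &= A, \\
\varphi(\ux) &= C + x_0 B + x_0^2 A.
\end{align*}
Substituting these into $F(\ux,\Xi) = \Phi(\ux,\ux,\Xi)^2 - 4\varphi(\ux)\Phi(\ux,\Xi,\Xi)$, the expansion $(B + 2x_0 A)^2 = B^2 + 4 x_0 AB + 4 x_0^2 A^2$ exactly cancels the $x_0$-dependent part of $4A(C + x_0 B + x_0^2 A)$, leaving $B^2 - 4AC = F(\Delta,\Xi)$, which is the desired identity.

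For part (ii), I would write $\psi = a\ux + b\uy$ with $a := \Phi(\ux,\ux,\uy)$ and $b := -2\varphi(\ux)$, then compute $\varphi(\psi) = \tfrac{1}{3}\Phi(\psi,\psi,\psi)$ by symmetric trilinear expansion. Using $\Phi(\ux,\ux,\ux) = 3\varphi(\ux)$, $\Phi(\uy,\uy,\uy) = 3\varphi(\uy)$ and $\Phi(\ux,\ux,\uy) = a$, this gives
\begin{align*}
\varphi(\psi) &= a^3 \varphi(\ux) + a^2 b\,\Phi(\ux,\ux,\uy) + a b^2\,\Phi(\ux,\uy,\uy) + b^3 \varphi(\uy) \\
 &= a^3 \varphi(\ux) + a^3 b + a b^2\,\Phi(\ux,\uy,\uy) + b^3 \varphi(\uy).
\end{align*}
Substituting $b = -2\varphi(\ux)$ into the first three summands produces
\[
 -a\,\varphi(\ux)\bigl[a^2 - 4\varphi(\ux)\Phi(\ux,\uy,\uy)\bigr]
 = -\varphi(\ux)\,\Phi(\ux,\ux,\uy)\,F(\ux,\uy),
\]
while the last summand becomes $-8\varphi(\ux)^3\varphi(\uy)$. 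Adding these yields the claimed identity.

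There is essentially no obstacle: both parts reduce to formal bookkeeping. The only point requiring a little care is keeping track of the binomial coefficients $\binom{3}{k}$ arising in the symmetric expansion of $\Phi$ into monomials in $\ux$ and $\uy$, but this is entirely routine.
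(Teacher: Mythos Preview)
Your proof is correct. For part (ii) your argument is essentially identical to the paper's: both expand $\varphi(a\ux+b\uy)$ by trilinearity and then specialize $a=\Phi(\ux,\ux,\uy)$, $b=-2\varphi(\ux)$.

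For part (i) the approaches differ slightly. You substitute $\ux=\Delta+x_0\Xi$ and verify the cancellation by hand, which works perfectly. The paper instead observes that $F(\ux,\Xi)$ is the discriminant of the quadratic polynomial $\varphi(\ux+T\Xi)=\varphi(\ux)+\Phi(\ux,\ux,\Xi)T+\Phi(\ux,\Xi,\Xi)T^2$ in $T$ (the cubic term $\varphi(\Xi)T^3$ vanishes), and then invokes the invariance of the discriminant under the translation $T\mapsto T-x_0$. The paper's route is conceptually cleaner and explains \emph{why} the identity holds, whereas your direct expansion is more self-contained and avoids appealing to any external fact about discriminants. Either is fine here.
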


Note that $\varphi(\psi(\ux,\uy))$ simplifies to $- 8\varphi(\ux)^3\varphi(\uy)$ for points $\ux,\uy\in\bZ^3$ with $F(\ux,\uy)=0$, a fact that is interesting to compare with \cite[Lemma 2.1(i)]{Rb}.

\begin{proof}
By the multilinearity of $\Phi$, we have, for indeterminates $a$ and $b$,
\[
 \varphi(a\ux+b\uy) = a^3\varphi(\ux) + a^2b\Phi(\ux,\ux,\uy) + ab^2\Phi(\ux,\uy,\uy) + b^3\varphi(\uy).
\]
Substituting $\Phi(\ux,\ux,\uy)$ for $a$ and $-2\varphi(\ux)$ for $b$ in this identity yields (ii). We also note that $F(\ux,\Xi)$ is the discriminant of
\[
 \varphi(\ux+T\Xi) = \varphi(\ux) + \Phi(\ux,\ux,\Xi)T + \Phi(\ux,\Xi,\Xi)T^2
\]
viewed as a polynomial in $T$.  Then (i) follows from the fact that the discriminant of a polynomial $p(T)$ stays invariant under the change of variable $T\mapsto T-x_0$.
\end{proof}

In the course of this research, we were also lead to work with polarized versions of $F$.  In particular the polynomial
\[
 g(\ux,\uu,\uy)
   = \Phi(\ux,\uu,\uy)\Phi(\ux,\ux,\uy) - \Phi(\ux,\ux,\uu)\Phi(\ux,\uy,\uy) - \varphi(\ux)\Phi(\uu,\uy,\uy),
\]
involving a third triple of indeterminates $\uu$, was playing a central role until we discovered the approach that will be presented in the next section.  We simply mention its existence in case it comes back in future investigations.

\begin{prop}
\label{F:prop:est_F}
For any $\ux,\uy\in\bZ^3$, we have
\[
 \begin{aligned}
  F(\ux,\uy) =
   &-4x_0^3y_0\delta(\ux)\delta(\uy) + \cO\big( \|\uy\|^2L(\ux)^4 + \|\ux\|^4L(\uy)^2 \big),\\
  \|\psi(\ux,\uy)\|
   &\ll \|\ux\|\/\|\uy\|\/L(\ux)^2 + \|\ux\|^3L(\uy).
 \end{aligned}
\]
\end{prop}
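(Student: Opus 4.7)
The plan is to perform a Taylor-style expansion of both $F(\ux,\uy)$ and $\psi(\ux,\uy)$ around $\Xi=(1,\xi,\xi^3)$. Writing $\Delta_x:=\ux-x_0\Xi$ and $\Delta_y:=\uy-y_0\Xi$ (so that $\|\Delta_x\|=L(\ux)$ and $\|\Delta_y\|=L(\uy)$), the multilinearity of $\Phi$, the identity $\Phi(\Xi,\Xi,\Xi)=0$, and the formula $\delta(\uu)=\Phi(\uu-u_0\Xi,\Xi,\Xi)$ yield
\begin{align*}
 \Phi(\ux,\ux,\uy) &= x_0^2\delta(\uy)+2x_0y_0\delta(\ux)+R_1,\\
 \Phi(\ux,\uy,\uy) &= y_0^2\delta(\ux)+2x_0y_0\delta(\uy)+R_2,\\
 \varphi(\ux) &= x_0^2\delta(\ux)+R_3,
\end{align*}
where $R_1,R_2,R_3$ are explicit sums of trilinear $\Phi$-expressions in $\Xi,\Delta_x,\Delta_y$ satisfying $R_1\ll \|\ux\|L(\ux)L(\uy)+\|\uy\|L(\ux)^2$, $R_2\ll \|\uy\|L(\ux)L(\uy)+\|\ux\|L(\uy)^2$, and $R_3\ll\|\ux\|L(\ux)^2$.

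For the bound on $\psi$, I substitute into $\psi=\Phi(\ux,\ux,\uy)\ux-2\varphi(\ux)\uy$. The two leading $\delta(\ux)$-contributions combine as $2x_0y_0\delta(\ux)\ux-2x_0^2\delta(\ux)\uy=2x_0\delta(\ux)(y_0\ux-x_0\uy)$, and the auxiliary vector $y_0\ux-x_0\uy=y_0\Delta_x-x_0\Delta_y$ has norm $\cO(\|\uy\|L(\ux)+\|\ux\|L(\uy))$, so this combined piece is $\cO(\|\ux\|\|\uy\|L(\ux)^2+\|\ux\|^3L(\uy))$ after using $L(\ux)\ll\|\ux\|$. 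The remaining contributions $x_0^2\delta(\uy)\ux$, $R_1\ux$, and $R_3\uy$ are bounded termwise by the same two quantities, yielding the stated estimate for $\psi$.

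For the bound on $F$, I substitute the expansions into $F=\Phi(\ux,\ux,\uy)^2-4\varphi(\ux)\Phi(\ux,\uy,\uy)$. The purely $\delta$-level part collapses to $x_0^4\delta(\uy)^2-4x_0^3y_0\delta(\ux)\delta(\uy)$ (the $4x_0^2y_0^2\delta(\ux)^2$ summands cancel between the two halves of $F$, and the $-8x_0^3y_0\delta(\ux)\delta(\uy)$ coming from $-4\varphi(\ux)\Phi(\ux,\uy,\uy)$ combines with $+4x_0^3y_0\delta(\ux)\delta(\uy)$ from $\Phi(\ux,\ux,\uy)^2$). The extra summand $x_0^4\delta(\uy)^2$ is already $\cO(\|\ux\|^4L(\uy)^2)$, so the task reduces to estimating the total contribution $E$ of the cross-terms involving at least one of $R_1,R_2,R_3$.

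The main obstacle is a single family of cross-terms. Expanding $2(2x_0y_0\delta(\ux))R_1$ produces the piece $4x_0y_0^2\delta(\ux)\Phi(\Delta_x,\Delta_x,\Xi)$, of size $\|\ux\|\|\uy\|^2L(\ux)^3$, which does \emph{not} fit inside $\|\uy\|^2L(\ux)^4+\|\ux\|^4L(\uy)^2$ because $\|\ux\|/L(\ux)$ may be arbitrarily large. The key observation is that the identical expression appears with opposite sign inside $-4R_3(y_0^2\delta(\ux))$, namely as $-4x_0y_0^2\delta(\ux)\Phi(\Delta_x,\Delta_x,\Xi)$, and the two cancel exactly. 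Once this cancellation is spotted, every remaining term in $E$ falls into one of three harmless types: (i) directly bounded by $\|\ux\|^4L(\uy)^2$ (for example $\|\ux\|^3L(\ux)L(\uy)^2$ or $\|\ux\|^2L(\ux)^2L(\uy)^2$, using $L(\ux)\ll\|\ux\|$); (ii) directly bounded by $\|\uy\|^2L(\ux)^4$ (for example $y_0^2\Phi(\Delta_x,\Delta_x,\Xi)^2$ or $y_0^2\delta(\ux)\varphi(\Delta_x)$); or (iii) of mixed type such as $\|\ux\|^2\|\uy\|L(\ux)^2L(\uy)$ or $\|\ux\|\|\uy\|L(\ux)^3L(\uy)$, which are absorbed via the AM--GM inequality $2ab\le a^2+b^2$ with $a=\|\ux\|^2L(\uy)$ and $b=\|\uy\|L(\ux)^2$. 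Identifying this single cancellation is the only delicate point; the rest of the argument is mechanical bookkeeping.
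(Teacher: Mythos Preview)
Your argument is correct. The expansion you carry out is complete, and the single cancellation you flag between $4x_0y_0^2\,\delta(\ux)\,\Phi(\Delta_x,\Delta_x,\Xi)$ (from the cross term $2(2x_0y_0\delta(\ux))R_1$) and $-4x_0y_0^2\,\delta(\ux)\,\Phi(\Delta_x,\Delta_x,\Xi)$ (from $-4R_3\cdot y_0^2\delta(\ux)$) is indeed the only dangerous one: a quick check of the full coefficient of $y_0^2$ in $F(\ux,\uy)$ shows it reduces to $\Phi(\Delta_x,\Delta_x,\Xi)^2-4\delta(\ux)\varphi(\Delta_x)=\cO(L(\ux)^4)$, and no term of type $\|\ux\|^a\|\uy\|L(\ux)^{4-a}L(\uy)$ with $a\ge 3$ survives outside the main term. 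The $\psi$ estimate is also fine.

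The paper, however, organizes the computation differently and more structurally. Instead of expanding all factors at once, it first expands only in $\uy$: since $F$ is quadratic in its second argument,
\[
 F(\ux,\uy)=y_0^2F(\ux,\Xi)+2y_0A+F(\ux,\Delta_y),
\]
and then invokes the identity $F(\ux,\Xi)=F(\ux-x_0\Xi,\Xi)$ of Lemma~\ref{F:lemma:Fpsi}(i) (a consequence of the discriminant interpretation of $F(\cdot,\Xi)$) to get $F(\ux,\Xi)=\cO(L(\ux)^4)$ immediately. This single identity absorbs \emph{all} of the $y_0^2$-coefficient cancellations at once, including both the $4x_0^2y_0^2\delta(\ux)^2$ cancellation that you put in the ``pure $\delta$'' part and the $4x_0y_0^2\delta(\ux)\Phi(\Delta_x,\Delta_x,\Xi)$ cancellation that you track by hand. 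Only the cross term $A$ then needs a short explicit expansion. So your approach is a direct term-by-term verification, while the paper's approach packages the crucial cancellation into a prior algebraic lemma; yours is self-contained, theirs explains \emph{why} the cancellation is inevitable.
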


\begin{proof}
Write $\uy=y_0\Xi+\Delta\uy$.  Since $F$ is quadratic in its second argument, we find
\begin{equation}
 \label{F:prop:est_F:eq1}
 F(\ux,\uy) = y_0^2F(\ux,\Xi) + 2y_0A + F(\ux,\Delta\uy),
\end{equation}
where
\[
 A = \Phi(\ux,\ux,\Xi)\Phi(\ux,\ux,\Delta\uy)-4\varphi(\ux)\Phi(\ux,\Delta\uy,\Xi).
\]
To estimate $A$, we write $\ux=x_0\Xi+\Delta\ux$ and expand it as a polynomial in $x_0$.  Since
\[
 \begin{aligned}
 \Phi(\ux,\ux,\Xi) &= 2x_0\delta(\ux)+\cO(L(\ux)^2),\\
 \Phi(\ux,\ux,\Delta\uy) &= x_0^2\delta(\uy)+\cO(\|\ux\|\/L(\ux)L(\uy)),\\
 \varphi(\ux) &= x_0^2\delta(\ux)+\cO(\|\ux\|\/L(\ux)^2),\\
 \Phi(\ux,\Delta\uy,\Xi) &=x_0\delta(\uy)+\cO(L(\ux)L(\uy)),
 \end{aligned}
\]
we obtain
\[
 A = -2x_0^3\delta(\ux)\delta(\uy) + \cO(\|\ux\|^2L(\ux)^2L(\uy)).
\]
By Lemma \ref{F:lemma:Fpsi} (i), we also have $F(\ux,\Xi) = F(\Delta\ux,\Xi) = \cO(L(\ux)^4)$, while it is clear that $F(\ux,\Delta\uy) = \cO(\|\ux\|^4L(\uy)^2)$.  Substituting these estimates into \eqref{F:prop:est_F:eq1} yields
\begin{align*}
 F(\ux,\uy) = &-4x_0^3y_0\delta(\ux)\delta(\uy)\\
    &+ \cO\big(\|\uy\|^2L(\ux)^4 + \|\ux\|^2\|\uy\|L(\ux)^2L(\uy) + \|\ux\|^4L(\uy)^2\big).
\end{align*}
Finally, we may omit the middle term in the error estimate as it is the geometric mean of the other two.  The estimate for $\|\psi(\ux,\uy)\|$ is proved along similar lines and we leave this task to the reader.
\end{proof}

\begin{cor}
 \label{F:cor:est_Fij}
Suppose that $\lambda>2/3$.  For any pair of consecutive integers $i<j$ in $I$, we have
\[
 F(\ux_i,\ux_j)
   = - 4 x_{i,0}^3 x_{j,0} \delta(\ux_i) \delta(\ux_j) + \cO( X_j^2 L_i^4).
\]
\end{cor}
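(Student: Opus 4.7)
The plan is to apply Proposition \ref{F:prop:est_F} directly with $\ux=\ux_i$ and $\uy=\ux_j$, and then absorb one of the two error terms into the other. Substituting gives
\[
F(\ux_i,\ux_j)
  = -4 x_{i,0}^3 x_{j,0} \delta(\ux_i)\delta(\ux_j)
   + \cO\bigl(X_j^2 L_i^4 + X_i^4 L_j^2\bigr),
\]
so the only thing left to verify is the inequality
\[
 X_i^4 L_j^2 \ll X_j^2 L_i^4,
\]
which can be rewritten as the product of the two simpler comparisons $X_i^2\ll X_j$ and $L_j\ll L_i^2$ (after squaring).

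For the first comparison I would invoke Corollary \ref{prelim:cor:XjLj} at the index $i$: since $\lambda>2/3$ and $i\in I$, we obtain $X_i^2\ll X_{i+1}$; combined with $X_{i+1}\le X_j$ (because $i+1\le j$), this gives $X_i^2\ll X_j$. For the second comparison I would apply Corollary \ref{prelim:cor:XjLj} at the index $j$: since $j\in I$, it yields $L_j\ll L_{j-1}^2$. Property (b) of the sequence of minimal points then shows $L_{j-1}\le L_i$ (we use $j-1\ge i$), hence $L_j\ll L_i^2$.

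Multiplying the squares of the two resulting estimates gives $X_i^4 L_j^2 \ll X_j^2 L_i^4$, as required; the term $X_i^4 L_j^2$ is thus swallowed by $\cO(X_j^2 L_i^4)$, and the stated formula follows. No serious obstacle is expected here: the content of the proof is entirely contained in Proposition \ref{F:prop:est_F} and Corollary \ref{prelim:cor:XjLj}, and the only mild subtlety is the observation that the index $j-1$ lies between $i$ and $j$, which is precisely why the hypothesis that $i,j$ be consecutive in $I$ is not actually needed beyond having $j\in I$ and $i<j$.
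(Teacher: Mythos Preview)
Your proof is correct and essentially identical to the paper's: both apply Proposition~\ref{F:prop:est_F} and then use Corollary~\ref{prelim:cor:XjLj} at $i$ and at $j$ to absorb $X_i^4L_j^2$ into $X_j^2L_i^4$. One small slip in your closing remark: you do use $i\in I$ (to get $X_i^2\ll X_{i+1}$), so the hypothesis cannot be relaxed to merely ``$j\in I$ and $i<j$'' as you suggest---you still need $i\in I$, though consecutiveness is indeed unnecessary.
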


\begin{proof}
Since $\lambda>2/3$, Corollary \ref{prelim:cor:XjLj} gives $X_i^2\ll X_{i+1}\le X_j$ and $L_j \ll L_{j-1}^2 \le L_i^2$, so $X_i^4L_j^2 \ll X_j^2L_i^4$ and thus we may omit the product $X_i^4L_j^2$ in the error term from the preceding proposition.
\end{proof}

If we assume furthermore that $F(\ux_i,\ux_j)=0$, then the above estimate yields $X_i^3 X_j\delta_i \delta_j \ll X_j^2 L_i^4 \ll X_j^2 X_{i+1}^{-4\lambda}$ and, in view of Corollary \ref{prelim:cor:est_varphi}, this has the following consequence.

\begin{cor}
\label{F:cor:Fzero}
Suppose that $\lambda>2/3$.  Then, for any pair of consecutive elements $i<j$ in $I$ with $F(\ux_i,\ux_j) = 0$, we have
\[
 |\varphi(\ux_i)\varphi(\ux_j)| \ll X_i^{-1} X_{i+1}^{-4\lambda} X_j^3.
\]
\end{cor}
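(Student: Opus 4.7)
The statement is essentially the content of the short paragraph preceding it, so the proof is a direct calculation combining Corollary \ref{F:cor:est_Fij} with the estimates of Section \ref{sec:prelim}. I would lay it out in four quick steps.

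First, since $F(\ux_i,\ux_j)=0$, Corollary \ref{F:cor:est_Fij} immediately gives
\[
 4\,|x_{i,0}|^3\,|x_{j,0}|\,\delta_i\,\delta_j \ll X_j^2 L_i^4.
\]
Second, I would use the fact that $|x_{i,0}|\asymp X_i$ for every $i$, which follows from $L_i<1/2$: the bounds $|x_{i,1}-x_{i,0}\xi|\le L_i$ and $|x_{i,2}-x_{i,0}\xi^3|\le L_i$ together with $L_i<1/2$ force $\|\ux_i\|\asymp|x_{i,0}|$. This was already used in the proof of Lemma \ref{prelim:lemma:xjwedgexi}. Substituting, one gets
\[
 X_i^3 X_j\,\delta_i\,\delta_j \ll X_j^2 L_i^4.
\]

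Third, I would invoke the main hypothesis in the form \eqref{prelim:hypLiXip}, $L_i\ll X_{i+1}^{-\lambda}$, to conclude
\[
 X_i^3 X_j\,\delta_i\,\delta_j \ll X_j^2\,X_{i+1}^{-4\lambda}.
\]
Finally, Corollary \ref{prelim:cor:est_varphi} supplies $\delta_i\asymp|\varphi(\ux_i)|\,X_i^{-2}$ and $\delta_j\asymp|\varphi(\ux_j)|\,X_j^{-2}$ (valid since $i$ and $j$ are large and $\lambda>2/3$). Plugging these in and clearing factors of $X_i$ and $X_j$ rearranges the inequality into
\[
 |\varphi(\ux_i)\varphi(\ux_j)| \ll X_i^{-1}\,X_{i+1}^{-4\lambda}\,X_j^3,
\]
which is the desired bound.

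There is no real obstacle here: every ingredient has been prepared. The one point worth being careful about is that Corollary \ref{prelim:cor:est_varphi} requires $\varphi(\ux_i)\neq 0$, but this is guaranteed by Theorem \ref{prelim:thm:phi} for all large enough $i$, and consecutive indices $i<j$ in $I$ can be assumed to lie in that range. Everything else is just arithmetic.
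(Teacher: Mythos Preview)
Your proof is correct and follows exactly the same route as the paper: set $F(\ux_i,\ux_j)=0$ in Corollary \ref{F:cor:est_Fij}, use $|x_{i,0}|\asymp X_i$ and $L_i\ll X_{i+1}^{-\lambda}$, then convert $\delta_i,\delta_j$ back to $|\varphi(\ux_i)|,|\varphi(\ux_j)|$ via Corollary \ref{prelim:cor:est_varphi}. Your remark about needing $\varphi(\ux_i)\neq 0$ (hence $i$ large enough) is a fair point that the paper leaves implicit.
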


The next result deals with the complementary case where $F(\ux_i,\ux_j)\neq 0$.

\begin{prop}
\label{F:prop:Fnotzero}
Suppose that $\lambda>(5-\sqrt{13})/2 \cong 0.697$.  Then, for consecutive elements $i<j$ in $I$,
we have $|\varphi(\ux_i)\varphi(\ux_j)| = o(X_i^{-1}X_j)$ as $i\to\infty$.  For the pairs
$(i,j)$ with $F(\ux_i,\ux_j)\neq 0$, we also have
\[
 1 \le |F(\ux_i,\ux_j)| \ll X_j^2L_i^4 \et X_j \gg X_{i+1}^{2\lambda}.
\]
\end{prop}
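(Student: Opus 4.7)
The plan is to prove the first assertion (hereafter Part 1) by a direct estimate on $\delta_i\delta_j$, and to deduce the bounds in the second assertion (Part 2) as quick consequences of Part 1 combined with Corollary \ref{F:cor:est_Fij}.

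For Part 1, since $|\varphi(\ux_i)\varphi(\ux_j)| \asymp X_i^2X_j^2\delta_i\delta_j$ by Corollary \ref{prelim:cor:est_varphi}, the desired estimate $|\varphi(\ux_i)\varphi(\ux_j)| = o(X_i^{-1}X_j)$ is equivalent to $X_i^3X_j\delta_i\delta_j = o(1)$. I would bound $\delta_i\ll L_i\ll X_{i+1}^{-\lambda}$ and $\delta_j\ll L_j\ll X_{j+1}^{-\lambda}$ via the same corollary together with \eqref{prelim:hypLiXip}, then substitute $X_{j+1}^{-\lambda}\ll X_j^{-\lambda^2/(1-\lambda)}$ from Corollary \ref{prelim:cor:approximable}. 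The exponent on $X_j$ that emerges is $(1-\lambda-\lambda^2)/(1-\lambda)$, which is negative under the hypothesis (which exceeds the golden-ratio threshold $(\sqrt{5}-1)/2$), so one may bound $X_j^{(1-\lambda-\lambda^2)/(1-\lambda)}$ from above by using $X_j\ge X_{i+1}$. After substituting $X_i\ll X_{i+1}^{(1-\lambda)/\lambda}$ from Corollary \ref{prelim:cor:approximable} at index $i$ and clearing denominators, a routine calculation yields
\[
  X_i^3X_j\delta_i\delta_j \;\ll\; X_{i+1}^{(\lambda^2-5\lambda+3)/(\lambda(1-\lambda))}.
\]
The hypothesis $\lambda>(5-\sqrt{13})/2$ is precisely what makes the quadratic $\lambda^2-5\lambda+3$ negative, so the exponent is negative and the right-hand side tends to $0$ as $i\to\infty$.

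For Part 2, Corollary \ref{F:cor:est_Fij} gives $F(\ux_i,\ux_j) = -4x_{i,0}^3x_{j,0}\delta_i\delta_j + \cO(X_j^2L_i^4)$, whose main term has absolute value $\asymp X_i^3X_j\delta_i\delta_j$ and hence equals $o(1)$ by Part 1. When $F(\ux_i,\ux_j)$ is a non-zero integer, $|F(\ux_i,\ux_j)|\ge 1$, so for all sufficiently large $i$ the error term $\cO(X_j^2L_i^4)$ must exceed $1/2$, forcing simultaneously $X_j^2L_i^4\gg 1$ and $|F(\ux_i,\ux_j)|\ll X_j^2L_i^4$. Combining $X_j^2L_i^4\gg 1$ with $L_i\le cX_{i+1}^{-\lambda}$ immediately yields $X_j^2\gg X_{i+1}^{4\lambda}$, that is, $X_j\gg X_{i+1}^{2\lambda}$.

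The chief technical step is the algebraic manipulation in Part 1 that isolates the quadratic $\lambda^2-5\lambda+3$; once this is carried out, the hypothesized threshold $(5-\sqrt{13})/2$ appears as its relevant root, and Part 2 reduces to a comparison of the main and error terms in $F(\ux_i,\ux_j)$.
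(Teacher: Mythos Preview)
Your argument is correct and follows essentially the same route as the paper's proof: both reduce Part~1 to showing $X_i^3X_j\delta_i\delta_j=o(1)$ via Corollary~\ref{prelim:cor:est_varphi}, feed in $\delta_i\ll X_{i+1}^{-\lambda}$, $\delta_j\ll X_{j+1}^{-\lambda}$, use Corollary~\ref{prelim:cor:approximable} at both $i$ and $j$ together with $X_{i+1}\le X_j$, and arrive at the quadratic $\lambda^2-5\lambda+3$ whose smaller root is the threshold; Part~2 is then the same comparison of main and error terms in Corollary~\ref{F:cor:est_Fij}. The only cosmetic difference is that you collapse everything to a negative power of $X_{i+1}$ whereas the paper collapses to a negative power of $X_{j+1}$, yielding exponents $(\lambda^2-5\lambda+3)/(\lambda(1-\lambda))$ versus $(\lambda^2-5\lambda+3)/\lambda^2$ with the same sign.
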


\begin{proof}
We first note that the ratio $\theta=(1-\lambda)/\lambda$ satisfies $2\theta\ge \lambda$ since, by Corollary \ref{prelim:cor:first_est_lambda}, we have $\lambda\le \sqrt{3}-1$.  Then, for consecutive elements $i<j$ in $I$, we obtain
\begin{alignat}{2}
  \frac{X_i}{X_j}|\varphi(\ux_i)\varphi(\ux_j)|
   &\asymp X_i^3X_j\delta_i\delta_j
    \ll X_i^3 &&X_{i+1}^{-\lambda} X_j X_{j+1}^{-\lambda}
      \quad\text{by Corollary \ref{prelim:cor:est_varphi},} \notag\\
   &\ll X_{i+1}^{3\theta-\lambda} X_j X_{j+1}^{-\lambda}
      &&\ \text{since $X_i\ll X_{i+1}^\theta$ by Corollary \ref{prelim:cor:approximable},} \notag\\
   &\le X_j^{3\theta+1-\lambda} X_{j+1}^{-\lambda}
      &&\ \text{since $X_{i+1}\le X_j$ and $3\theta-\lambda>0$,} \notag\\
   &\ll X_{j+1}^{3\theta^2+\theta-1}
      &&\ \text{since $X_j\ll X_{j+1}^\theta$ by Corollary \ref{prelim:cor:approximable}.} \notag
\end{alignat}
As $3\theta^2+\theta-1 = (\lambda^2-5\lambda+3)/\lambda^2 <0$, this proves our first assertion.  It also shows that $X_i^3X_j\delta_i\delta_j=o(1)$ as $i\to \infty$.  Thus Corollary \ref{F:cor:est_Fij} yields $1\le |F(\ux_i,\ux_j)| \ll X_j^2L_i^4$ when the integer $F(\ux_i,\ux_j)$ is non-zero and $i$ is large enough.  In that case, using $L_i\ll X_{i+1}^{-\lambda}$, we find $X_j \gg X_{i+1}^{2\lambda}$.
\end{proof}

The non-vanishing of $F$ has important consequences.  The next lemma provides useful estimates that we will need repeatedly.

\begin{lemma}
 \label{F:lemma:tech_alpha_beta}
Suppose that $\lambda>(5-\sqrt{13})/2 \cong 0.697$.  Then, for any pair of consecutive elements $i<j$ in $I$ with $F(\ux_i,\ux_j)\neq 0$, we have
\[
 X_{i+1}^{2\lambda} \ll X_j\ll X_{j+1}^\alpha,
 \quad
 X_{j+1}\ll X_j^{2/\lambda} \ll X_{i+1}^\beta
\]
where $\alpha=2\lambda(1-\lambda)/(3\lambda-1)$ and $\beta=2(1-\lambda)/(3\lambda-2)$.
\end{lemma}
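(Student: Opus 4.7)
The plan is to chain together Proposition~\ref{F:prop:Fnotzero}, Proposition~\ref{prelim:prop:height}, and Corollary~\ref{prelim:cor:est_varphi} to derive the four bounds in sequence. The non-vanishing hypothesis $F(\ux_i,\ux_j)\neq 0$ enters exactly once, at the very first step; the rest depends only on $\lambda>2/3$, which is ensured by $\lambda>(5-\sqrt{13})/2$.

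The leftmost bound $X_{i+1}^{2\lambda}\ll X_j$ is simply the last conclusion of Proposition~\ref{F:prop:Fnotzero}, which packages the arithmetic lower bound $|F(\ux_i,\ux_j)|\ge 1$ with the analytic upper bound $|F(\ux_i,\ux_j)|\ll X_j^2 L_i^4$ and the estimate $L_i\ll X_{i+1}^{-\lambda}$. To pass to $X_j\ll X_{j+1}^\alpha$, I would feed the first bound into the estimate $X_j\ll(X_{i+1}X_{j+1})^{1-\lambda}$ of Proposition~\ref{prelim:prop:height}. Rewriting $X_j\gg X_{i+1}^{2\lambda}$ as $X_{i+1}\ll X_j^{1/(2\lambda)}$ and substituting gives
\[
X_j \;\ll\; X_j^{(1-\lambda)/(2\lambda)}\,X_{j+1}^{1-\lambda},
\]
i.e.\ $X_j^{(3\lambda-1)/(2\lambda)}\ll X_{j+1}^{1-\lambda}$; since $\lambda>2/3$ makes this exponent positive, raising to the $(2\lambda)/(3\lambda-1)$-th power produces precisely the exponent $\alpha=2\lambda(1-\lambda)/(3\lambda-1)$ on $X_{j+1}$.

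The third bound $X_{j+1}\ll X_j^{2/\lambda}$ does not use the non-vanishing of $F$ at all. The minimal-point inequality $L_j\ll X_{j+1}^{-\lambda}$ together with the lower bound $L_j\gg\delta_j\gg X_j^{-2}$ supplied by Corollary~\ref{prelim:cor:est_varphi} immediately gives $X_{j+1}^\lambda\ll X_j^2$, which is the claim. Finally, for $X_j^{2/\lambda}\ll X_{i+1}^\beta$, I would substitute the third bound back into the first estimate of Proposition~\ref{prelim:prop:height}:
\[
X_j\;\ll\;(X_{i+1}X_{j+1})^{1-\lambda}\;\ll\;X_{i+1}^{1-\lambda}\,X_j^{2(1-\lambda)/\lambda}.
\]
Moving the $X_j$-factor across (the resulting exponent $(3\lambda-2)/\lambda$ being positive because $\lambda>2/3$) and raising to the $2/(3\lambda-2)$ power yields exactly $\beta=2(1-\lambda)/(3\lambda-2)$ on $X_{i+1}$.

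The argument is essentially mechanical: no new auxiliary construction is required, and the only verifications are the positivity of the exponents $(3\lambda-1)/(2\lambda)$ and $(3\lambda-2)/\lambda$ at the two rearrangements, both of which follow at once from $\lambda>(5-\sqrt{13})/2>2/3$. The real ``obstacle'' in this lemma is conceptual rather than technical: one must recognise that the non-vanishing of $F$, via the single inequality $X_j\gg X_{i+1}^{2\lambda}$, is strong enough to be bootstrapped (through Proposition~\ref{prelim:prop:height}) into two-sided control of the growth ratios $X_j/X_{i+1}$ and $X_{j+1}/X_j$, and that the lower bound $\delta_j\gg X_j^{-2}$ supplies the independent ingredient needed to close the chain.
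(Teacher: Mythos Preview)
Your proof is correct and follows essentially the same route as the paper's: the first bound comes from Proposition~\ref{F:prop:Fnotzero}, the second by feeding it into $X_j\ll(X_{i+1}X_{j+1})^{1-\lambda}$ from Proposition~\ref{prelim:prop:height}, the third from $X_j^{-2}\ll\delta_j\ll L_j\ll X_{j+1}^{-\lambda}$ via Corollary~\ref{prelim:cor:est_varphi}, and the fourth by substituting the third back into the same height inequality. Your write-up simply makes the exponent bookkeeping explicit where the paper leaves it to the reader.
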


\begin{proof}
For such pairs $(i,j)$, Proposition \ref{F:prop:Fnotzero} gives $X_{i+1}^{2\lambda} \ll X_j$.  Combining this with the estimate $X_j \ll (X_{i+1}X_{j+1})^{1-\lambda}$ from Proposition \ref{prelim:prop:height}, we obtain $X_j \ll X_{j+1}^\alpha$.  On the other hand, by Corollary \ref{prelim:cor:est_varphi}, we have $X_j^{-2} \ll L_j \ll X_{j+1}^{-\lambda}$, and so $X_{j+1} \ll X_j^{2/\lambda}$.  Combining this with the same estimate from Proposition \ref{prelim:prop:height} yields $X_j^{2/\lambda} \ll X_{i+1}^\beta$.
\end{proof}

\begin{cor}
 \label{F:cor:upperbound_lambda}
Suppose that $F(\ux_i,\ux_j)\neq 0$ for infinitely many pairs of consecutive integers $i<j$ in $I$.  Then, we have $\lambda \le 5/7 \cong 0.714$.
\end{cor}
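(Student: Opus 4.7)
The plan is to derive a contradiction from the assumption $\lambda > 5/7$. Note first that $5/7 > (5-\sqrt{13})/2$, so under this assumption the hypotheses of Lemma \ref{F:lemma:tech_alpha_beta} are satisfied, and moreover $\lambda > 2/3$, ensuring that the quantities $\alpha=2\lambda(1-\lambda)/(3\lambda-1)$ and $\beta=2(1-\lambda)/(3\lambda-2)$ are positive.

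The key idea is simply to chain the three inequalities supplied by Lemma \ref{F:lemma:tech_alpha_beta}. For each pair of consecutive elements $i<j$ in $I$ with $F(\ux_i,\ux_j)\neq 0$, the lemma gives
\[
 X_{i+1}^{2\lambda} \ll X_j, \qquad X_j \ll X_{j+1}^{\alpha}, \qquad X_{j+1} \ll X_{i+1}^{\beta}.
\]
Substituting the second and third into the first yields
\[
 X_{i+1}^{2\lambda} \ll X_{i+1}^{\alpha\beta}
\]
with constants depending only on $\xi,\lambda,c$. Since the hypothesis provides infinitely many such pairs, and the sequence $(X_i)$ is strictly increasing, we have $X_{i+1}\to \infty$ along this subsequence, forcing the exponent inequality $2\lambda \le \alpha\beta$.

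It remains to check that $2\lambda \le \alpha\beta$ rearranges to $\lambda \le 5/7$. Computing,
\[
 \alpha\beta = \frac{4\lambda(1-\lambda)^2}{(3\lambda-1)(3\lambda-2)},
\]
so (after clearing the positive denominator $(3\lambda-1)(3\lambda-2)$ and dividing by $2\lambda>0$) the inequality $2\lambda \le \alpha\beta$ becomes $(3\lambda-1)(3\lambda-2)\le 2(1-\lambda)^2$, i.e., $9\lambda^2-9\lambda+2 \le 2\lambda^2 -4\lambda+2$, which simplifies to $7\lambda^2 - 5\lambda \le 0$, i.e., $\lambda\le 5/7$. This contradicts $\lambda>5/7$ and completes the proof. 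The whole argument is essentially bookkeeping; there is no serious obstacle once the three inequalities of Lemma \ref{F:lemma:tech_alpha_beta} are in place, and the value $5/7$ emerges naturally from the algebraic simplification above.
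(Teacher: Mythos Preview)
Your proof is correct and follows essentially the same approach as the paper: both chain the inequalities of Lemma \ref{F:lemma:tech_alpha_beta} to bound a power of $X_{i+1}$ against itself and then let $i\to\infty$. The only cosmetic difference is that the paper combines $X_{i+1}^{2\lambda}\ll X_j$ directly with $X_j\ll X_{i+1}^{\lambda\beta/2}$ to obtain $4\le\beta$, whereas you route through $X_{j+1}$ to get the equivalent condition $2\lambda\le\alpha\beta$; the resulting algebra collapses to the same inequality $\lambda\le 5/7$.
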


\begin{proof}
Assuming, as we may, that $\lambda > (5-\sqrt{13})/2$, Lemma \ref{F:lemma:tech_alpha_beta} gives $X_{i+1}^{2\lambda} \ll X_j$ and $X_j \ll X_{i+1}^{\lambda\beta/2}$ for each of these pairs $(i,j)$.  Therefore, we must have $4\le \beta$ and so $\lambda\le 5/7$.
\end{proof}

%
%

\section{Search for algebraic relations}
\label{sec:search}

\subsection{The basic search}
Given triples of indeterminates $\ux=(x_0,x_1,x_2)$ and $\uy=(y_0,y_1,y_2)$, our aim is to find non-zero polynomials $P\in\bZ[\ux,\uy]$ which vanish at the point $(\ux_i,\ux_j)\in\bZ^3\times\bZ^3$ for infinitely many pairs of consecutive elements $i<j$ in $I$.  Such vanishing should derive simply from the integral nature of the points $\ux_k$, the general growth of $\|\ux_k\|$, and the inequality
\begin{equation}
 \label{search:eqL}
  L_\xi(\ux_k) \ll \|\ux_{k+1}\|^{-\lambda},
\end{equation}
independently of the value of the implied constant.  As the latter condition remains satisfied if we replace the sequence $(\ux_k)_{k\ge 1}$ by $(a_k\ux_k)_{k\ge 1}$ for bounded non-zero integers $a_k$, we deduce that the polynomials $P(a\ux,b\uy)$ should share the same vanishing for any choice of non-zero integers $a$ and $b$.  As a consequence, that vanishing applies to each of the bi-homogeneous components of $P$ and so we may restrict our search to bi-homogeneous polynomials, namely polynomials that are separately homogeneous in $\ux$ and in $\uy$.  Similarly the condition \eqref{search:eqL} is preserved if we replace the number $\xi$ by $a\xi$ for some non-zero integer $a$ and replace each $\ux_k$ by its image under the polynomial map $\theta_a(\ux) = (x_0, ax_1, a^3x_2)$.  Thus, the polynomials $P(\theta_a(\ux),\theta_a(\uy))$ should also have the same vanishing for any non-zero integer $a$. In particular, that vanishing applies to each of the homogeneous components of $P$ for the \emph{weight}, upon defining the weight of a monomial $x_0^{e_0}x_1^{e_1}x_2^{e_2}y_0^{f_0}y_1^{f_1}y_2^{f_2}$ as $e_1+3e_2+f_1+3f_2$.  So, we may further restrict our search to weight-homogeneous polynomials.

Note that the polynomials
\begin{equation}
\label{search:eq:STUV}
 S=\varphi(\ux),\quad
 T=\Phi(\ux,\ux,\uy),\quad
 U=\Phi(\ux,\uy,\uy),\quad
 V=\varphi(\uy)
\end{equation}
obtained by polarization of $\varphi(\ux)$ are bi-homogeneous as well as homogeneous for the weight, of weight $3$ equal to their total degree.  So, any polynomial in $S,T,U,V$ which is bihomogeneous as a polynomial in $\ux$ and $\uy$ is automatically homogeneous for the weight.  This makes the subring $\bQ[S,T,U,V]$ of $\bQ[\ux,\uy]$ particularly pleasant to work with.  It is within that ring that we will search for polynomials.  Its generators $S$, $T$, $U$ and $V$ are algebraically independent over $\bQ$, as a short computation shows that their images under the specialization $x_0\mapsto 0$ and $y_0\mapsto 1$ are so.  Thus, $\bQ[S,T,U,V]$ can be viewed as a ring of polynomials in $4$ variables.

The bi-degree and the weight give rise to an $\bN^3$-grading on the ring $\bQ[\ux,\uy]$.  But since we will work in the subring $\bQ[S,T,U,V]$, it is only the $\bN^2$-grading given by the bi-degree that will matter.  So, we simply consider the $\bQ$-vector space decomposition
\[
 \bQ[\ux,\uy] = \bigoplus_{(m,n)\in\bN^2} \bQ[\ux,\uy]_{(m,n)}
\]
where $\bQ[\ux,\uy]_{(m,n)}$ stands for the bi-homogeneous part of $\bQ[\ux,\uy]$ of bi-degree $(m,n)$.


\subsection{Forcing divisibility}
For each $i\in I$, we denote by a subscript $i$ the values of the polynomials \eqref{search:eq:STUV} at the point $(\ux_i,\ux_j)$, where $j$ stands for the successor of $i$ in $I$. Thus, we have
\[
 S_i=\varphi(\ux_i),\quad
 T_i=\Phi(\ux_i,\ux_i,\ux_j),\quad
 U_i=\Phi(\ux_i,\ux_j,\ux_j),\quad
 V_i=\varphi(\ux_j).
\]
We also note that $V_i=S_j$.  Since $\ux_i$ and $\ux_{i+1}$ form a basis of the integer points in $W_i$ and since $\ux_j$ is a primitive point of $\bZ^3$, we can write
\begin{equation}
 \label{search:eqpq}
 \ux_j = p_i\ux_i+q_i\ux_{i+1}
\end{equation}
for relatively prime integers $p_i$ and $q_i$ with $q_i\neq 0$.  The next proposition gathers several properties of the integers $q_i$.

\begin{prop}
\label{search:prop:qi}
For each pair of consecutive elements $i<j$ in $I$, we have
\begin{itemize}
\item[(a)] $|q_i| \asymp X_j/X_{i+1}$,
\item[(b)] $T_i\equiv 3p_iS_i,\ U_i\equiv 3p_i^2S_i,\ V_i\equiv p_i^3S_i \mod q_i$,
\item[(c)] $\gcd(q_i,S_i)=\gcd(q_i,V_i)$.
\end{itemize}
\end{prop}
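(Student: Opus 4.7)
The proof of each part is a direct calculation based on the decomposition $\ux_j = p_i\ux_i + q_i\ux_{i+1}$, combined with the multilinearity of $\Phi$ and the estimates already established in Section \ref{sec:prelim}.

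For part (a), the plan is to take the wedge product of both sides of $\ux_j = p_i\ux_i + q_i\ux_{i+1}$ with $\ux_i$, which gives $\ux_i\wedge\ux_j = q_i(\ux_i\wedge\ux_{i+1})$. Taking norms and applying Lemma \ref{prelim:lemma:xjwedgexi} to both sides yields
\[
  |q_i|\cdot X_{i+1}L_i \asymp \|\ux_i\wedge\ux_j\| \asymp X_jL_i,
\]
so that $|q_i|\asymp X_j/X_{i+1}$ as claimed.

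For part (b), I would substitute $\ux_j = p_i\ux_i + q_i\ux_{i+1}$ into each of $T_i$, $U_i$, $V_i$ and expand using multilinearity and the identity $\Phi(\ux_i,\ux_i,\ux_i)=3\varphi(\ux_i)=3S_i$. For $T_i$, one obtains immediately $T_i = 3p_iS_i + q_i\Phi(\ux_i,\ux_i,\ux_{i+1})$. For $U_i$, the expansion gives $U_i = 3p_i^2S_i + 2p_iq_i\Phi(\ux_i,\ux_i,\ux_{i+1}) + q_i^2\Phi(\ux_i,\ux_{i+1},\ux_{i+1})$. For $V_i=\varphi(\ux_j)$, I would invoke the expansion of $\varphi(a\ux+b\uy)$ derived in the proof of Lemma \ref{F:lemma:Fpsi}, giving
\[
  V_i = p_i^3 S_i + p_i^2q_i\Phi(\ux_i,\ux_i,\ux_{i+1}) + p_iq_i^2\Phi(\ux_i,\ux_{i+1},\ux_{i+1}) + q_i^3\varphi(\ux_{i+1}).
\]
In each case all but the first summand is divisible by $q_i$, which gives the three congruences.

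For part (c), the plan is to use the congruence $V_i\equiv p_i^3S_i\pmod{q_i}$ from part (b). Since $\gcd(p_i,q_i)=1$ we have $\gcd(p_i^3,q_i)=1$, hence $\gcd(q_i,V_i)=\gcd(q_i,p_i^3S_i)=\gcd(q_i,S_i)$.

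There is no real obstacle in this proof: it is essentially bookkeeping with the multilinear expansion of $\varphi$ and $\Phi$ at $p_i\ux_i+q_i\ux_{i+1}$, together with a single application of Lemma \ref{prelim:lemma:xjwedgexi} to estimate the size of $q_i$.
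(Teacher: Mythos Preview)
Your proof is correct and follows the same approach as the paper's own argument: part (a) via the wedge product and Lemma \ref{prelim:lemma:xjwedgexi}, part (b) by expanding $\Phi$ multilinearly at $\ux_j=p_i\ux_i+q_i\ux_{i+1}$ (the paper phrases this more compactly as the single observation $\ux_j\equiv p_i\ux_i\pmod{q_i}$), and part (c) from the third congruence together with $\gcd(p_i,q_i)=1$.
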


\begin{proof} Taking the exterior product of both sides of \eqref{search:eqpq} with $\ux_i$, we find $\ux_i\wedge\ux_j = q_i\ux_i\wedge\ux_{i+1}$.  Then, applying Lemma \ref{prelim:lemma:xjwedgexi} separately to each product yields $X_jL_i\asymp |q_i|X_{i+1}L_i$ and (a) follows.  The equality \eqref{search:eqpq} also yields $\ux_j\equiv p_i\ux_i \mod q_i$ and thus $T_i \equiv \Phi(\ux_i,\ux_i,p_i\ux_i) = 3p_iS_i \mod q_i$.  The other two congruences from (b) are proved in the same way.  Finally (c) is an immediate consequence of the congruence $V_i\equiv p_i^3S_i \mod q_i$ together with the fact that $p_i$ is prime to $q_i$.
\end{proof}

In particular, the above congruences imply that the integer $T_i^2-3S_iU_i$ is congruent to $0$ modulo $q_i$ and thus divisible by $q_i$.   The next observation is crucial for the present work and will allow us to reach higher divisibility properties.

\begin{lemma}
\label{search:lemma:equiv_cond}
Let $p$ and $q$ be indeterminates over the ring $\bQ[\ux,\uy]$, let $k\in\bN$ and let $P(\ux,\uy)$ be a bihomogeneous element of $\bQ[\ux,\uy]$.  Then the following assertions are equivalent
\begin{itemize}
 \item[(i)] $q^k$ divides $P(\ux,\ux+q\uy)$ in $\bQ[q,\ux,\uy]$,
 \item[(ii)] $q^k$ divides $P(\ux,p\ux+q\uy)$ in $\bQ[p,q,\ux,\uy]$.
\end{itemize}
\end{lemma}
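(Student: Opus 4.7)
The implication (ii) $\Rightarrow$ (i) is immediate: specialization $p \mapsto 1$ preserves divisibility in polynomial rings over $\bQ$, and sends $P(\ux, p\ux + q\uy)$ to $P(\ux, \ux + q\uy)$, so any factorization $P(\ux, p\ux + q\uy) = q^k Q(p, q, \ux, \uy)$ in $\bQ[p,q,\ux,\uy]$ descends to $P(\ux, \ux + q\uy) = q^k Q(1, q, \ux, \uy)$ in $\bQ[q,\ux,\uy]$.

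For the converse, my plan is to exploit the bihomogeneity of $P$ to obtain a joint homogeneity in $(p, q)$ for the polynomial $\tilde P(p, q, \ux, \uy) := P(\ux, p\ux + q\uy)$. Writing $P$ as bihomogeneous of bidegree $(m, n)$, the identity
\[
 P(\ux, \lambda p\ux + \lambda q\uy) = \lambda^n P(\ux, p\ux + q\uy)
\]
shows that $\tilde P$ is homogeneous of total degree $n$ in the pair $(p, q)$. Hence it admits a unique expansion
\[
 \tilde P(p, q, \ux, \uy) = \sum_{j=0}^n p^{n-j} q^j P_j(\ux, \uy)
\]
with coefficients $P_j \in \bQ[\ux, \uy]$. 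Specialising $p = 1$ yields the expansion $P(\ux, \ux + q\uy) = \sum_{j=0}^n q^j P_j(\ux, \uy)$ viewed as a polynomial in $q$ with coefficients in $\bQ[\ux, \uy]$, and by uniqueness of such expansions hypothesis (i) is equivalent to the vanishing $P_0 = P_1 = \cdots = P_{k-1} = 0$. Plugging this back into the $(p,q)$-homogeneous decomposition of $\tilde P$ gives
\[
 P(\ux, p\ux + q\uy) = q^k \sum_{j=k}^{n} p^{n-j} q^{j-k} P_j(\ux, \uy),
\]
which lies in $q^k \bQ[p, q, \ux, \uy]$, proving (ii).

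I do not foresee any genuine obstacle: the whole argument reduces to recognising the $(p,q)$-homogeneity of $\tilde P$, after which the divisibility transfers for free. The bihomogeneity hypothesis on $P$ is essential, as it is precisely what synchronises the degrees in $p$ and $q$ in the expansion of $\tilde P$, and thereby allows the vanishing of low-$q$-degree coefficients detected at the single specialisation $p = 1$ to force the same vanishing in the full bivariate expansion.
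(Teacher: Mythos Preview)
Your proof is correct, and it takes a different route from the paper's. The paper argues the converse by substituting $p\ux$ for $\ux$ in the hypothesis $P(\ux,\ux+q\uy)=q^kQ(q,\ux,\uy)$: using homogeneity of $P$ in its \emph{first} argument (of degree $d$, say), this becomes $p^d P(\ux,p\ux+q\uy)=q^kQ(q,p\ux,\uy)$, and then the coprimality of $p^d$ and $q^k$ in $\bQ[p,q,\ux,\uy]$ forces $q^k\mid P(\ux,p\ux+q\uy)$. You instead use homogeneity of $P$ in its \emph{second} argument to see directly that $P(\ux,p\ux+q\uy)$ is homogeneous of degree $n$ in the pair $(p,q)$, which lets you read off its full $(p,q)$-expansion from the single specialisation $p=1$. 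Your approach is arguably more transparent, since it identifies the relevant coefficients explicitly and avoids the appeal to coprimality; the paper's approach, on the other hand, is a one-line substitution trick once one notices it.
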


\begin{proof}
It is clear that (ii) implies (i).  To prove the converse, suppose that $P(\ux,\ux+q\uy)=q^kQ(q,\ux,\uy)$ for some $Q\in \bQ[q,\ux,\uy]$.  Substituting $p\ux$ for $\ux$ in this equality and denoting by $d$ the degree of $P$ in $\ux$, we obtain
\[
 p^d P(\ux,p\ux+q\uy) = P(p\ux,p\ux+q\uy) = q^k Q(q,p\ux,\uy).
\]
Thus $q^k$ divides $p^d P(\ux,p\ux+q\uy)$ in $\bQ[p,q,\ux,\uy]$, and (ii) follows.
\end{proof}

\begin{definition}
\label{search:def:Jk}
For each $k\in\bN$, we denote by $J^{(k)}$ the ideal of $\bQ[\ux,\uy]$ generated by the bihomogeneous elements of $\bQ[\ux,\uy]$ which satisfy the equivalent conditions of the lemma.
\end{definition}

It can be shown that $J:=J^{(1)}$ is the ideal of $\bQ[\ux,\uy]$ generated by the coordinates of the exterior product $\ux\wedge \uy$ in the standard basis, and that $J^{(k)} = J^k$ is the $k$-th power of $J$ for each integer $k\ge 1$.  However, we will not need this fact here and this is why we adopt a different notation.  Our interest in these ideals is motivated by the following result.

\begin{prop}
\label{search:prop:divJk}
Let $k\in\bN^*$ and let $P$ be a bi-homogeneous element of $J^{(k)}\cap \bZ[\ux,\uy]$.  Then, for each pair of consecutive elements $i<j$ of $I$, the integer $P(\ux_i,\ux_j)$ is divisible by $q_i^k$.  In particular, when $P(\ux_i,\ux_j)\neq 0$, we have $|q_i|^k \le |P(\ux_i,\ux_j)|$.
\end{prop}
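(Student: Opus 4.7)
The plan is to exploit condition (ii) of Lemma \ref{search:lemma:equiv_cond} together with the relation $\ux_j = p_i\ux_i + q_i\ux_{i+1}$ coming from \eqref{search:eqpq}. The key observation is that the polynomial divisibility defining $J^{(k)}$ is stable under $\bQ[\ux,\uy]$-linear combinations, and that the divisor $q^k$ is monic in $q$, which is exactly what lets me transfer divisibility from $\bQ$ to $\bZ$ at the specialization step.

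First I would check that condition (ii) is preserved under multiplication by arbitrary elements of $\bQ[\ux,\uy]$: if a bi-homogeneous $G$ satisfies $G(\ux,p\ux+q\uy)=q^k H(p,q,\ux,\uy)$ for some $H\in \bQ[p,q,\ux,\uy]$, then for any $R\in\bQ[\ux,\uy]$ one has $(RG)(\ux,p\ux+q\uy)=R(\ux,p\ux+q\uy)\cdot q^k H(p,q,\ux,\uy)$, still divisible by $q^k$ in $\bQ[p,q,\ux,\uy]$. Since $J^{(k)}$ is the ideal generated by bi-homogeneous polynomials satisfying (ii), every element $P\in J^{(k)}$ therefore satisfies $q^k\mid P(\ux,p\ux+q\uy)$ in $\bQ[p,q,\ux,\uy]$.

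The main obstacle is then promoting this divisibility from $\bQ$ to $\bZ$. Since $P\in\bZ[\ux,\uy]$, the polynomial $P(\ux,p\ux+q\uy)$ lies in $\bZ[p,q,\ux,\uy]$. I want to write it as $q^k T(p,q,\ux,\uy)$ with $T$ having integer coefficients. Viewing $P(\ux,p\ux+q\uy)$ as a polynomial in the single variable $q$ with coefficients in $\bZ[p,\ux,\uy]$, the fact that $q^k$ divides it in $\bQ[p,q,\ux,\uy]$ is equivalent to the vanishing of the coefficients of $q^0,q^1,\dots,q^{k-1}$; since $q^k$ is monic in $q$, the quotient by polynomial division lies automatically in $\bZ[p,\ux,\uy][q]=\bZ[p,q,\ux,\uy]$, giving the desired $T$.

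Finally I would substitute $p=p_i$, $q=q_i$, $\ux=\ux_i$, $\uy=\ux_{i+1}$ (all integer values) into the identity $P(\ux,p\ux+q\uy)=q^k T(p,q,\ux,\uy)$. By \eqref{search:eqpq} the left-hand side becomes $P(\ux_i,\ux_j)$, while the right-hand side becomes $q_i^k\cdot T(p_i,q_i,\ux_i,\ux_{i+1})$, an integer multiple of $q_i^k$. This yields the divisibility statement, and the last assertion follows at once: if $P(\ux_i,\ux_j)\neq 0$, then its absolute value is a positive integer multiple of $|q_i|^k$, hence at least $|q_i|^k$.
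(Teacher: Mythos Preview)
Your proof is correct and follows essentially the same approach as the paper: use condition (ii) of Lemma \ref{search:lemma:equiv_cond} to write $P(\ux,p\ux+q\uy)=q^kQ$, observe that $Q$ has integer coefficients because $P$ does, and specialize to $(p_i,q_i,\ux_i,\ux_{i+1})$ using \eqref{search:eqpq}. You supply more detail at two points the paper leaves implicit, namely why every element of the ideal $J^{(k)}$ (not just the bi-homogeneous generators) satisfies the divisibility, and why monicity of $q^k$ forces the quotient to have integer coefficients; both elaborations are sound.
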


\begin{proof}
Since $P\in J^{(k)}$, we have $P(\ux,p\ux+q\uy) = q^k Q(p,q,\ux,\uy)$ for some polynomial $Q$ with coefficients in $\bQ$.  Since $P$ has integer coefficients, the same is true of $Q$.  The first assertion follows by specializing $p$, $q$, $\ux$ and $\uy$ in $p_i$, $q_i$, $\ux_i$ and $\ux_{i+1}$ respectively.
\end{proof}

Let $\rho$ denote the automorphism of the $\bQ$-algebra $\bQ[q,\ux,\uy]$ which fixes $q$ and $\ux$ but maps $\uy$ to $\ux+q\uy$.  According to Definition \ref{search:def:Jk}, a bi-homogeneous element $P$ of $\bQ[\ux,\uy]$ belongs to $J^{(k)}$ for some integer $k\ge 0$ if and only if $q^k$ divides $\rho(P)$ in $\bQ[q,\ux,\uy]$. When $P\neq 0$, there exists a largest integer $k$ with that property.  We call it the \emph{$J$-valuation} of $P$ and denote it $v_J(P)$.  Clearly it satisfies $v_J(PQ)=v_J(P)+v_J(Q)$ for any pair of non-zero bi-homogeneous elements $P$ and $Q$ of $\bQ[\ux,\uy]$.

A quick computation shows that
\begin{equation}
\label{search:eq:rhoSTUV}
 \begin{aligned}
  \rho(S)&=\varphi(\ux)=S,\\
  \rho(T)&=\Phi(\ux,\ux,\ux+q\uy)=3S+qT,\\
  \rho(U)&=\Phi(\ux,\ux+q\uy,\ux+q\uy)=3S+2qT+q^2U, \\
  \rho(V)&=\varphi(\ux+q\uy)=S+qT+q^2U+q^3V.
 \end{aligned}
\end{equation}
From this we deduce that the polynomials
\[
 A:=T^2-3SU \in \bQ[\ux,\uy]_{(4,2)},
 \quad
 B:=T^3-3TA-27S^2V \in \bQ[\ux,\uy]_{(6,3)}
\]
satisfy
\begin{equation}
\label{search:eq:rhoAB}
 \begin{aligned}
 \rho(A)&=(3S+qT)^2-3S(3S+2qT+q^2U)=q^2A,\\
 \rho(B)&=(3S+qT)^3-3(3S+qT)q^2A-\dots 
        =q^3B.
 \end{aligned}
\end{equation}
Therefore, they belong respectively to $J^{(2)}$ and $J^{(3)}$.  More precisely, $A$ and $B$ have respective $J$-valuations $2$ and $3$, while $S$, $T$, $U$, $V$ have valuation $0$.

According to Proposition \ref{search:prop:divJk}, the fact that $A$ belongs to $J^{(2)}$ implies that $q_i^2$ divides $A_i:=T_i^2-3S_iV_i$ for each $i\in I$, strengthening the remark made just after Proposition \ref{search:prop:qi}.  In this context, we note that
\[
 F:=F(\ux,\uy)=T^2-4SU=(4A-T^2)/3.
\]
So, if for consecutive elements $i<j$ in $I$ the integer $F_i:=F(\ux_i,\ux_j)$ vanishes, then $T_i^2=4A_i$ is divisible by $4q_i^2$ and thus $q_i$ divides $T_i$.  Taking into account the congruences of Proposition \ref{search:prop:qi} (b), we deduce that $q_i$ also divides $3p_iS_i$, $U_i$ and $3V_i$.  Since $q_i$ is relatively prime to $p_i$, this proves the first part of the following proposition.

\begin{prop}
\label{search:prop:Fzero}
For each $i\in I$ such that $F_i=0$, the integer $q_i$ divides $3S_i$, $T_i$, $U_i$ and $3V_i$.  In particular, if $\lambda>2/3$, we have $|q_i| \le 3|S_i|$ for all such large enough indices $i$.
\end{prop}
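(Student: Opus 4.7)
The plan is to combine the identity $3F = 4A - T^2$, which follows directly from the definitions $A = T^2 - 3SU$ and $F = T^2 - 4SU$, with the divisibility and congruences already set up in this section. First I would specialize at $(\ux_i, \ux_j)$: the hypothesis $F_i = 0$ then rewrites as $T_i^2 = 4 A_i$. Since $A$ lies in $J^{(2)}$ by the computation $\rho(A) = q^2 A$ carried out in \eqref{search:eq:rhoAB}, Proposition \ref{search:prop:divJk} yields $q_i^2 \mid A_i$, hence $q_i^2 \mid T_i^2$, and therefore $q_i \mid T_i$ by unique factorization in $\bZ$.

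Next I would feed this divisibility into the congruences of Proposition \ref{search:prop:qi}(b). The congruence $T_i \equiv 3 p_i S_i \pmod{q_i}$ combined with $\gcd(p_i, q_i) = 1$ forces $q_i \mid 3 S_i$. The remaining two divisibilities then follow by substitution: rewriting $U_i \equiv 3 p_i^2 S_i = p_i^2 (3 S_i) \pmod{q_i}$ gives $q_i \mid U_i$, and similarly $3 V_i \equiv 3 p_i^3 S_i = p_i^3 (3 S_i) \equiv 0 \pmod{q_i}$ gives $q_i \mid 3 V_i$.

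For the ``In particular'' clause, the extra input is Theorem \ref{prelim:thm:phi}: under the hypothesis $\lambda > 2/3$, it guarantees $S_i = \varphi(\ux_i) \neq 0$ for every sufficiently large index $i$. Then $3 S_i$ is a non-zero integer divisible by the non-zero integer $q_i$, and the estimate $|q_i| \le 3 |S_i|$ is immediate. There is no real obstacle here; the proposition is essentially a direct unpacking of the $J^{(k)}$ framework developed in this section, the only subtlety being that the factor of $3$ attached to $S_i$ and $V_i$ cannot in general be dropped, since $q_i$ need not be coprime to $3$.
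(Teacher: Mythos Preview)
Your proof is correct and follows essentially the same route as the paper: from $F_i=0$ and $3F=4A-T^2$ you get $T_i^2=4A_i$, then $q_i^2\mid A_i$ (since $A\in J^{(2)}$) yields $q_i\mid T_i$, and the congruences of Proposition~\ref{search:prop:qi}(b) together with $\gcd(p_i,q_i)=1$ give the remaining divisibilities; the ``In particular'' clause is handled identically via Theorem~\ref{prelim:thm:phi}.
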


The second part follows from the fact that, when $\lambda>2/3$, the integers $S_i$ are all non-zero except for finitely many indices $i$.  In the next section, we will analyze the consequences of this result and show that, if $F_i=0$ for infinitely many $i\in I$, then $\lambda\le (5-\sqrt{13})/2\cong 0.697$.


\subsection{The ring $\cR$}
From now on, we restrict our attention to the graded ring
\[
 \cR=\bigoplus_{\ell\ge 0}\cR_\ell
  \quad \text{where} \quad
 \cR_\ell := \bQ[S,T,U,V] \cap \bQ[\ux,\uy]_{(2\ell,\ell)} \quad (\ell\ge 0),
\]
which contains $F$ but also $T$, $A$, $B$ and $S^2V$. We will say that an element $P$ of $\cR$ is \emph{homogeneous of degree $\ell$} if it belongs to $\cR_\ell$ (which is equivalent to asking that $P$ is homogeneous of degree $\ell$ in $\uy$).  Thus, $T$, $A$, $F$, $B$ and $S^2V$ are homogeneous of respective degrees 1, 2, 2, 3 and 3. The next result provides two presentations of $\cR$ as a weighted polynomial ring in three variables.

\begin{prop}
\label{search:prop:R}
We have $\cR=\bQ[T,F,S^2V]=\bQ[T,A,B]$.  For each $\ell\ge 0$, one basis of the vector space $\cR_\ell$ over $\bQ$ consists of the products $T^{\ell-2m-3n}F^m(S^2V)^n$ where $(m,n)$ runs through all pairs of non-negative integers $m$ and $n$ with $2m+3n\le \ell$.  Another basis consists of the products $T^{\ell-2m-3n}A^mB^n$ for the same pairs $(m,n)$.
\end{prop}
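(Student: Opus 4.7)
The plan is to first exhibit a natural monomial basis of $\cR_\ell$ directly from the bidegree constraints, then deduce the ring equalities, and finally obtain the two proposed bases from algebraic independence of the generators.  To establish the natural basis, I would determine when a monomial $S^a T^b U^c V^d$ lies in $\bQ[\ux,\uy]_{(2\ell,\ell)}$.  Since $S, T, U, V$ have bidegrees $(3,0)$, $(2,1)$, $(1,2)$, $(0,3)$ respectively, the conditions are $3a + 2b + c = 2\ell$ and $b + 2c + 3d = \ell$.  Eliminating $a$ and $b$ yields $a = c + 2d$, $b = \ell - 2c - 3d$, subject only to $c, d \ge 0$ and $2c + 3d \le \ell$.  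Because $S, T, U, V$ are algebraically independent over $\bQ$ (as observed earlier in this section), the corresponding monomials $T^{\ell - 2c - 3d} (SU)^c (S^2V)^d$, for $(c, d) \in \bN^2$ with $2c + 3d \le \ell$, form a $\bQ$-basis of $\cR_\ell$; in particular, $\dim_\bQ \cR_\ell = N_\ell$, where $N_\ell$ denotes the cardinality of this indexing set.

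Next I would establish the ring equalities.  Using $F = T^2 - 4SU$, i.e., $SU = (T^2 - F)/4$, each element of the natural basis lies in $\bQ[T, F, S^2V]$, so $\cR_\ell \subseteq \bQ[T, F, S^2V]$; summing over $\ell$ gives $\cR \subseteq \bQ[T, F, S^2V]$.  The reverse inclusion is immediate from $T \in \cR_1$, $F \in \cR_2$ and $S^2V \in \cR_3$, whence $\cR = \bQ[T, F, S^2V]$.  The identical argument, this time with $SU = (T^2 - A)/3$ (from $A = T^2 - 3SU$) and $S^2V = (T^3 - 3TA - B)/27$ (from $B = T^3 - 3TA - 27 S^2V$) in place of the first substitution, yields $\cR = \bQ[T, A, B]$.

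Finally, for the basis statements, I would verify that $T, SU, S^2V$ are algebraically independent over $\bQ$: any polynomial relation $\sum_i P_i(SU, S^2V)\, T^i = 0$ in $\bQ[S,T,U,V]$ forces each $P_i(SU, S^2V) = 0$ in $\bQ[S, U, V]$ by reading off the powers of $T$; writing $P_i = \sum_{j,k} c^{(i)}_{j,k}\, Y^j Z^k$ then gives $\sum_{j,k} c^{(i)}_{j,k}\, S^{j + 2k} U^j V^k = 0$, and since these monomials in $S, U, V$ are pairwise distinct (the exponents of $U$ and $V$ recover $j$ and $k$), algebraic independence of $S, U, V$ yields $c^{(i)}_{j,k} = 0$ for all $i,j,k$.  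Consequently $T, F, S^2V$ and $T, A, B$ are algebraically independent as well, each being the image of $T, SU, S^2V$ under an invertible polynomial change of variables.  The proposed monomials $T^{\ell - 2m - 3n} F^m (S^2V)^n$ (respectively $T^{\ell - 2m - 3n} A^m B^n$) are then distinct, hence linearly independent, elements of $\cR_\ell$, and since their cardinality is $N_\ell = \dim_\bQ \cR_\ell$, each family is a basis of $\cR_\ell$.  The main obstacle is the bookkeeping in determining the natural basis, so that the indexing set $\{(c,d) : 2c + 3d \le \ell\}$ emerges correctly; once that is done, both ring equalities and both basis claims follow cleanly from the algebraic independence of $T, SU, S^2V$.
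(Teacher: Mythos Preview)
Your proof is correct and follows essentially the same approach as the paper: both begin by solving the bidegree constraint on monomials $S^aT^bU^cV^d$ to obtain $a=c+2d$, thus exhibiting the basis $T^{\ell-2c-3d}(SU)^c(S^2V)^d$ of $\cR_\ell$ and the presentation $\cR=\bQ[T,SU,S^2V]$, from which both ring equalities and both basis statements follow. You supply more detail than the paper does (in particular, you spell out the algebraic independence of $T,SU,S^2V$ and the invertible changes of variables, where the paper simply asserts that $\bQ[T,SU,S^2V]$ is a polynomial ring in three variables and that the remaining assertions are ``easily verified''), but the underlying argument is the same.
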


\begin{proof}
We first note that, for each $a,b,c,d\in \bN$, the monomial $S^aT^bU^cV^d$ is a bihomogeneous element of $\bQ[\ux,\uy]$ of bidegree $(3a+2b+c,b+2c+3d)$.  So it belongs to $\cR$ if and only if $3a+2b+c=2(b+2c+3d)$, a condition that amounts to $a=c+2d$.  Since $S$, $T$, $U$ and $V$ are algebraically independent over $\bQ$, this implies that the products
\[
 S^{c+2d}T^bU^cV^d = T^b (SU)^c (S^2V)^d \quad (b,c,d\in\bN)
\]
form a basis of $\cR$ as a vector space over $\bQ$, so $\cR=\bQ[T,SU,S^2V]$ is a polynomial ring in $3$ variables, and the first assertion of the proposition is easily verified.  The other assertions follow from this in view of the degrees of $T$, $A$, $B$, $F$ and $S^2V$.
\end{proof}

\begin{definition}
For each $\ell\in\bZ$, we denote by $\tau(\ell)$ the number of pairs $(m,n)\in\bN^2$ with $2m+3n\le \ell$.
\end{definition}

In particular, this gives $\tau(\ell)=0$ when $\ell<0$. With this notation at hand, we can now state and prove the main result of this section.

\begin{theorem}
\label{search:thm:dim}
For each choice of integers $k,\ell\ge 0$, we have
\[
 \dim_\bQ \cR_\ell = \tau(\ell)
 \et
 \dim_\bQ \frac{\cR_\ell}{\cR_\ell\cap J^{(k)}}
  = \begin{cases}
     \tau(k-1) &\text{if $k\le \ell$,}\\
     \tau(\ell) &\text{if $k > \ell$.}
    \end{cases}
\]
\end{theorem}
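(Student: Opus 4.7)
The first equality $\dim_\bQ \cR_\ell = \tau(\ell)$ is immediate from Proposition \ref{search:prop:R}, which exhibits the family $M_{m,n} := T^{\ell-2m-3n} A^m B^n$ indexed by $(m,n) \in \bN^2$ with $2m+3n \le \ell$ as a basis of $\cR_\ell$ of cardinality $\tau(\ell)$.

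For the second equality, my plan is to analyze the $q$-adic expansion of $\rho(P)$ in this basis. Combining \eqref{search:eq:rhoSTUV} and \eqref{search:eq:rhoAB} gives $\rho(M_{m,n}) = q^{2m+3n}(3S+qT)^{\ell-2m-3n} A^m B^n$, and the binomial theorem then produces, for $P = \sum c_{m,n} M_{m,n}$,
\[
 \rho(P) = \sum_{i \ge 0} q^i\, (3S)^{\ell-i} \sum_{\substack{(m,n) \in \bN^2 \\ 2m+3n \le i}} c_{m,n} \binom{\ell-2m-3n}{i-2m-3n} T^{i-2m-3n} A^m B^n.
\]
By Definition \ref{search:def:Jk}, the condition $P \in J^{(k)}$ is equivalent to the vanishing of the coefficient of $q^i$ in this expansion for every $i < k$. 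Since $\bQ[S,T,U,V]$ is an integral domain, multiplication by the nonzero element $(3S)^{\ell-i}$ is injective, so the vanishing reduces to that of the inner sum. But that inner sum is an element of $\cR_i$ written in the basis from Proposition \ref{search:prop:R}, so it vanishes iff each coefficient $c_{m,n}\binom{\ell-2m-3n}{i-2m-3n}$ is zero.

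When $k \le \ell$ and $i < k$, the bound $2m+3n \le i < \ell$ makes the binomial coefficient $\binom{\ell-2m-3n}{i-2m-3n}$ positive, so iterating over $i = 0, 1, \ldots, k-1$ forces $c_{m,n} = 0$ whenever $2m+3n < k$. Since each $M_{m,n}$ with $2m+3n \ge k$ belongs to $J^{(k)}$ (its $J$-valuation being exactly $2m+3n$, as one reads off from the leading $q$-term in $\rho(M_{m,n})$), we conclude that $\cR_\ell \cap J^{(k)}$ is exactly the span of these $M_{m,n}$, and the images of the $M_{m,n}$ with $2m+3n < k$ form a basis of the quotient, of cardinality $\tau(k-1)$. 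For $k > \ell$, each $\rho(M_{m,n})$ is a polynomial in $q$ of degree $(\ell - 2m - 3n) + (2m+3n) = \ell$, so $\deg_q \rho(P) \le \ell < k$ for every $P \in \cR_\ell$. Since $\rho$ is a $\bQ$-algebra automorphism of $\bQ[q,\ux,\uy]$, $\rho(P) = 0$ iff $P = 0$, and hence any nonzero $P \in \cR_\ell$ satisfies $v_J(P) \le \ell < k$, so $P \notin J^{(k)}$. Thus $\cR_\ell \cap J^{(k)} = 0$ and the quotient has full dimension $\tau(\ell)$. The only nontrivial step is the coefficient extraction above, which reduces cleanly to Proposition \ref{search:prop:R} after cancelling the power of $S$; everything else is bookkeeping.
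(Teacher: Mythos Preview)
Your proof is correct and follows essentially the same approach as the paper: both use the basis $T^{\ell-2m-3n}A^mB^n$ of $\cR_\ell$ and the key computation $\rho(T^{\ell-2m-3n}A^mB^n)=q^{2m+3n}(3S+qT)^{\ell-2m-3n}A^mB^n$. The only organizational difference is that you write out the full binomial expansion of $\rho(P)$ in powers of $q$ and read off the vanishing conditions directly, whereas the paper argues linear independence of the classes by contradiction, looking only at the lowest-order term in $q$; both arguments reduce to the linear independence statement of Proposition~\ref{search:prop:R}.
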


\begin{proof}
The first formula $\dim_\bQ \cR_\ell = \tau(\ell)$ is an immediate consequence of the previous proposition.  It tells us that, in order to prove the second one, we may restrict to $k\le \ell+1$ because, for $k=\ell+1$, the combination of the two formulas implies that $\cR_\ell\cap J^{(\ell+1)}=0$ and therefore $\cR_\ell\cap J^{(k)}=0$ whenever $k>\ell$.

For any $(m,n)\in\bN^2$ with $2m+3n\le \ell$, the formulas \eqref{search:eq:rhoSTUV} and \eqref{search:eq:rhoAB} give
\begin{equation}
 \label{search:thm:dim:eq1}
 \rho(T^{\ell-2m-3n}A^mB^n) = q^{2m+3n}(3S+qT)^{\ell-2m-3n}A^mB^n,
\end{equation}
and so $T^{\ell-2m-3n}A^mB^n$ belongs to $J^{(k)}$ if $2m+3n\ge k$.  In view of the preceding proposition, this means that the quotient $\cR_\ell/(\cR_\ell\cap J^{(k)})$ is generated, as a $\bQ$-vector space, by the classes of the products $T^{\ell-2m-3n}A^mB^n$ where $(m,n)$ runs through the elements of $\bN^2$ with $2m+3n<k$ (recall that $k\le \ell+1$).  So, it remains to prove that these classes are linearly independent over $\bQ$ and for this, we may further assume that $k\ge 1$.

Suppose on the contrary that there exist rational numbers $a_{m,n}$ not all zero, indexed by pairs $(m,n)\in\bN^2$ with $2m+3n < k$, such that
\[
 \sum_{2m+3n<k} a_{m,n}T^{\ell-2m-3n}A^mB^n \in J^{(k)}.
\]
Let $r=\min\{2m+3n\,;\, a_{m,n}\neq 0\}$.  By the above, we obtain that
\[
 P:=\sum_{2m+3n=r} a_{m,n}T^{\ell-r}A^mB^n \in J^{(r+1)}.
\]
However, the formula \eqref{search:thm:dim:eq1} implies that
\begin{align*}
 \rho(P)
 &= \sum_{2m+3n=r} a_{m,n}q^r(3S+qT)^{\ell-r}A^mB^n \\
 &\equiv q^r(3S)^{\ell-r} \sum_{2m+3n=r} a_{m,n}A^mB^n \mod q^{r+1}.
\end{align*}
Thus $\rho(P)$ is not divisible by $q^{r+1}$, and so $P\notin J^{(r+1)}$, a contradiction.
\end{proof}

\subsection{Examples}
Besides $S=\varphi$ and $F$, the proof of Theorem \ref{intro:thm:main} uses three more auxiliary polynomials $D^{(2)}$, $D^{(3)}$ and $D^{(6)}$ that we now introduce in the form of examples that illustrate the above considerations.  Their superscript refers to their $J$-valuation.

\medskip
\begin{example}
Since $\dim_\bQ(\cR_6/(\cR_6\cap J^{(2)})) = \tau(1)=1$, any pair of elements of $\cR_6$ are $\bQ$-linearly dependent modulo $J^{(2)}$.  Since
\[
 F = \frac{1}{3}(4A-T^2) \equiv -\frac{T^2}{3}
 \et
 S^2V = \frac{1}{27}(T^3-3TA-B) \equiv \frac{T^3}{27} \mod J^{(2)},
\]
we obtain
\begin{equation*}
\label{search:eq:D2}
 D^{(2)} := F^3+27(S^2V)^2 \in \cR_6\cap J^{(2)}.
\end{equation*}
\end{example}

\medskip
\begin{example}
Since $\dim_\bQ(\cR_6/(\cR_6\cap J^{(3)})) = \tau(2)=2$, the products $F^3$, $TF(S^2V)$ and $(S^2V)^2$ are $\bQ$-linearly dependent modulo $J^{(3)}$.  We find that
\[
 F^3 \equiv \frac{12T^4A-T^6}{27},
 \quad
 TF(S^2V) \equiv \frac{7T^4A-T^6}{81},
 \quad
 (S^2V)^2 \equiv \frac{T^6-6T^4A}{27^2}
\]
modulo $J^{(3)}$, and thus
\begin{equation*}
\label{search:eq:D3}
 D^{(3)} := F^3-18TF(S^2V)-135(S^2V)^2 \in \cR_6\cap J^{(3)}.
\end{equation*}
\end{example}

\medskip
\begin{example}
 \label{search:example:D6}
Since $\dim_\bQ(\cR_9/(\cR_9\cap J^{(6)})) = \tau(5)=5$, the six products $TF^4$, $F^3(S^2V)$, $T^2F^2(S^2V)$, $TF(S^2V)^2$, $T^3(S^2V)^2$ and $(S^2V)^3$ are $\bQ$-linearly dependent modulo $J^{(6)}$.  Explicitly, this yields
\begin{equation*}
\label{search:eq:D6}
 \begin{aligned}
  D^{(6)} := TF^4&+10F^3(S^2V)-11T^2F^2(S^2V)\\
                 &-180TF(S^2V)^2-T^3(S^2V)^2-675(S^2V)^3 \in \cR_9\cap J^{(6)}.
 \end{aligned}
\end{equation*}
Instead of checking this relation by a direct computation, it is simpler and more useful to derive it from an alternative formula for $D^{(6)}$.  To this end, we define
\begin{equation}
\label{search:eq:MN}
 M:=F^2-3TS^2V,
 \quad
 N:=D^{(3)}=F^3-18TFS^2V-135(S^2V)^2.
\end{equation}
It is easy to verify that $M\in\cR_4\cap J^{(2)}$, and we already know that $N\in\cR_6\cap J^{(3)}$.  Thus any $\bQ$-linear combination of $M^3$ and $N^2$ belongs to $\cR_{12}\cap J^{(6)}$.  On the other hand, since
\[
 M\equiv F^2 \et N\equiv F^3 \mod S^2V,
\]
we find that $S^2V$ divides $M^3-N^2$.  Since $S^2V$ has $J$-valuation $0$, the quotient is an element of $\cR_9\cap J^{(6)}$.  Expanding the expression $M^3-N^2$, we find that this quotient is $27D^{(6)}$, namely
\begin{equation}
\label{search:eq:D6bis}
 M^3-N^2 = 27S^2V D^{(6)}.
\end{equation}
\end{example}

We will need the following consequence of these constructions.

\begin{prop}
\label{search:prop:D2D3D6}
Suppose that $\lambda > (5-\sqrt{13})/2\cong 0.697$. For each $i\in I$, we have
\begin{itemize}
\item[(a)] $|q_i|^2 \ll |F_i|^3 + |S_i^2V_i|^2$ \quad\quad\ \ if \ $D^{(2)}_i\neq 0$,
\item[(b)] $|q_i|^3 \ll |F_i|^3 + |T_iF_iS_i^2V_i|$ \quad\    if \ $F_i\neq 0$ and $D^{(3)}_i\neq 0$,
\item[(c)] $|q_i|^6 \ll |T_iF_i^4| + |T_i^3(S_i^2V_i)^2|$ \   if \ $F_i\neq 0$ and $D^{(6)}_i\neq 0$.
\end{itemize}
\end{prop}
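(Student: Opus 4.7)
My proof would start from Proposition \ref{search:prop:divJk}: since $D^{(2)}$, $D^{(3)}$, $D^{(6)}$ are bi-homogeneous polynomials with integer coefficients lying respectively in $J^{(2)}$, $J^{(3)}$, $J^{(6)}$ (from the constructions in the preceding examples), I immediately get $|q_i|^k \le |D^{(k)}_i|$ whenever $D^{(k)}_i \ne 0$, for $k=2,3,6$. For (a), applying the triangle inequality to the identity $D^{(2)}=F^3+27(S^2V)^2$ is all that is needed.

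For (b) and (c), expanding $|D^{(3)}_i|$ and $|D^{(6)}_i|$ by the triangle inequality produces several monomials that must be absorbed into the two terms displayed on the right-hand sides. The crucial auxiliary estimate I would establish first is: for each $i\in I$ sufficiently large with $F_i\ne 0$,
\[
 |S_i^2V_i| = o(|T_i|). \quad (*)
\]
The plan is to verify $(*)$ by combining three size estimates from the earlier sections: $|S_i|\asymp X_i^2\delta_i$ from Corollary \ref{prelim:cor:est_varphi}, $|T_i|\asymp X_iX_j\delta_i$ from Proposition \ref{prelim:prop:est_Phi}, and $|S_iV_i|=|\varphi(\ux_i)\varphi(\ux_j)|=o(X_i^{-1}X_j)$ from Proposition \ref{F:prop:Fnotzero} (applicable because $\lambda>(5-\sqrt{13})/2$ and $F_i\ne 0$); multiplying the first and the third yields $|S_i^2V_i|=o(X_iX_j\delta_i)=o(|T_i|)$.

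Once $(*)$ is in hand, the rest is bookkeeping. Using $|F_i|\ge 1$ (a nonzero integer), $(*)$ yields $|S_i^2V_i|^2 \ll |T_i|\,|S_i^2V_i| \le |T_iF_iS_i^2V_i|$, which absorbs the last term of the triangle-inequality bound on $|D^{(3)}_i|=|F_i^3-18T_iF_iS_i^2V_i-135(S_i^2V_i)^2|$, proving (b). For (c) I would handle the six terms in the expansion of $D^{(6)}_i$ separately: the $F^3(S^2V)$ term is absorbed into $|T_iF_i^4|$ via $(*)$ combined with $|F_i|\ge 1$; the $T^2F^2(S^2V)$ term is bounded by $\frac{1}{2}(|T_iF_i^4|+|T_i^3(S_i^2V_i)^2|)$ via the AM--GM inequality $\sqrt{ab}\le(a+b)/2$ applied to $a=|T_iF_i^4|$ and $b=|T_i^3(S_i^2V_i)^2|$; the $TF(S^2V)^2$ term is absorbed into $|T_i^3(S_i^2V_i)^2|$ using $|F_i|\ll|T_i|^2$, which follows from the identity $F=T^2-4SU$ together with $|S_iU_i|\asymp|T_i|^2$ (a consequence of Corollary \ref{prelim:cor:est_varphi} and Proposition \ref{prelim:prop:est_Phi}); and the $(S^2V)^3$ term is absorbed into $|T_i^3(S_i^2V_i)^2|$ via $(*)$ combined with $|T_i|\to\infty$.

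The main obstacle is the estimate $(*)$: it is precisely what justifies the hypothesis $F_i\ne 0$ in (b) and (c), and without it the $(S^2V)^2$ summand in (b) and three stray terms in (c) would not obviously be dominated by the displayed terms.
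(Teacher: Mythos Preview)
Your argument is correct and follows essentially the same route as the paper: invoke Proposition~\ref{search:prop:divJk} to get $|q_i|^k\le |D^{(k)}_i|$, derive the key estimate $|S_i^2V_i|=o(|T_i|)$ from Propositions~\ref{prelim:prop:est_Phi} and~\ref{F:prop:Fnotzero} together with Corollary~\ref{prelim:cor:est_varphi}, and then absorb the stray monomials. One small remark: the first assertion of Proposition~\ref{F:prop:Fnotzero} that you use for $(*)$ does not actually require $F_i\neq 0$; that hypothesis enters only later, to ensure $|F_i|\ge 1$.

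The only real difference is in the bookkeeping for (c). You treat the six monomials of $D^{(6)}$ one at a time, which forces you to bring in the auxiliary bound $|F_i|\ll|T_i|^2$ for the $TF(S^2V)^2$ term. The paper organizes this more uniformly: since $|S_i^2V_i|=o(|T_iF_i|)$, every monomial in $D^{(6)}$ is obtained from one of $TF^4$, $T^2F^2(S^2V)$, $T^3(S^2V)^2$ by multiplying by a nonnegative power of $S^2V/(TF)$, so $|D^{(6)}_i|\ll |T_iF_i^4|+|T_i^2F_i^2S_i^2V_i|+|T_i^3(S_i^2V_i)^2|$, and the middle term drops out as the geometric mean of the other two. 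This avoids the separate estimate $|F_i|\ll|T_i|^2$ and handles (b) by the same mechanism, but the end result is identical.
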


\begin{proof}
The statement (a) is a direct consequence of Proposition \ref{search:prop:divJk} applied to the polynomial $D^{(2)}$, and does not require any hypothesis on $\lambda$.  To prove (b) and (c), we first note that, by Propositions \ref{prelim:prop:est_Phi} and \ref{F:prop:Fnotzero} we have $|T_i|\asymp (X_j/X_i)|S_i|$ and $|S_iV_i| = o(X_j/X_i)$ for $i<j$ running through all pairs of consecutive elements of $I$ with $i\to\infty$.  Therefore
\[
 |S_i^2V_i| = o(|T_i|)=o(|T_iF_i|)
\]
as $i$ goes to infinity through elements of $I$ with $F_i\neq 0$ (assuming, as we may, that there are infinitely many such $i$).  Since all monomials that compose $D^{(6)}$ can be obtained by multiplying $TF^4$, $T^2F^2(S^2V)$ and $T^3(S^2V)^2$ by appropriate powers of $S^2V/(TF)$, we deduce that, for the same values of $i$, we have
\[
 |D_i^{(6)}| \ll |T_iF_i^4| + |T_i^2F_i^2S_i^2V_i| + |T_i^3(S_i^2V_i)^2| \ll |T_iF_i^4| + |T_i^3(S_i^2V_i)^2|
\]
(in the second estimate, we dropped the middle term as it is the geometric mean of the other two).  Similarly, we find that $|D_i^{(3)}| \ll |F_i|^3 + |T_iF_iS_i^2V_i|$.  Then, (b) and (c) follow from Proposition \ref{search:prop:divJk}.
\end{proof}


\subsection{An additional remark}
\label{sec:search:remark}

Since $\cR$ is a polynomial ring over $\bQ$ in the variables $T$, $F$ and $S^2V$, it is a unique factorization domain and it makes sense to talk about irreducible elements of $\cR$ although, a priori, such polynomials may not remain irreducible in the ring $\bQ[\ux,\uy]$.  Similarly we can talk about relatively prime elements of $\cR$.  One can show that the polynomials $D^{(2)}$, $D^{(3)}$ and $D^{(6)}$ constructed above are irreducible elements of $\cR$.  Clearly, $F$ is another one. Therefore the next proposition implies that, for each sufficiently large $i\in I$, at most one of the integers $F_i$, $D^{(2)}_i$, $D^{(3)}_i$ or $D^{(6)}_i$ is zero.

\begin{prop}
 \label{search:prop:remark}
Suppose that $\lambda>2/3$, and let $P$, $Q$ be relatively prime homogeneous elements of $\cR=\bQ[T,F,S^2V]$.  Then there are only finitely many $i\in I$ such that $P$ and $Q$ both vanish at the point $(T_i,F_i,S_i^2V_i)$.
\end{prop}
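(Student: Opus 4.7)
The plan is to argue by contradiction: I assume there are infinitely many $i\in I$ with $P(T_i,F_i,S_i^2V_i)=Q(T_i,F_i,S_i^2V_i)=0$ and derive a contradiction from the Diophantine growth estimates already available.

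The first step is to exploit the graded structure of $\cR$. By Proposition~\ref{search:prop:R}, $\cR=\bQ[T,F,S^2V]$ is a polynomial ring in three algebraically independent generators, hence a three-dimensional UFD; its grading corresponds to the scaling action $\sigma_\lambda\colon(t,f,v)\mapsto(\lambda t,\lambda^2 f,\lambda^3 v)$ on $\bA^3$. Coprimality of $P,Q$ in $\cR$ forces $V(P,Q)\subset\bA^3$ to have dimension at most one; since $P,Q$ are homogeneous, $V(P,Q)$ is stable under the action, so its nonzero part $V(P,Q)\setminus\{0\}$ is a finite disjoint union of one-dimensional orbits $\{(\lambda t_0,\lambda^2 f_0,\lambda^3 v_0)\colon\lambda\in\bQ^*\}$. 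For $i\in I$ large, Proposition~\ref{prelim:prop:est_Phi} gives $T_i\neq 0$, so the point $(T_i,F_i,S_i^2V_i)$ is nonzero. By pigeonhole, infinitely many of these points lie in a single such orbit, so along an infinite subsequence of $I$ I may write $(T_i,F_i,S_i^2V_i)=(\lambda_i t_0,\lambda_i^2 f_0,\lambda_i^3 v_0)$ with $\lambda_i\in\bQ^*$ and fixed $(t_0,f_0,v_0)\in\bQ^3\setminus\{0\}$ (taken in $\bZ^3$ by choosing one point of the subsequence as a representative).

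The second step is to extract the decisive constraint from the $S^2V$-coordinate. Since $T_i\neq 0$ forces $t_0\neq 0$, I get $\lambda_i=T_i/t_0$ and $S_i^2V_i=c\,T_i^3$ with $c:=v_0/t_0^3\in\bQ$ fixed. If $c=0$, then $S_i^2V_i=0$ for infinitely many $i$; combined with $S_i\neq 0$ (Theorem~\ref{prelim:thm:phi}) this forces $V_i=S_j=0$ (where $j$ is the successor of $i$ in $I$), contradicting $S_j\neq 0$ for $j\in I$ large. Otherwise $c\neq 0$ and $|S_i^2V_i|\asymp|T_i|^3$. Substituting $|S_i|\asymp X_i^2\delta_i$ and $|V_i|=|S_j|\asymp X_j^2\delta_j$ from Corollary~\ref{prelim:cor:est_varphi}, together with $|T_i|\asymp X_iX_j\delta_i$ from Proposition~\ref{prelim:prop:est_Phi}, this reduces to $X_i\delta_j\asymp X_j\delta_i$, or equivalently $|S_j|/|S_i|\asymp(X_j/X_i)^3$.

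The third step is to contradict this proportionality. Using $|S_i|\geq 1$ (integer, nonzero for $i$ large) and $|S_j|\ll X_j^2L_j\ll X_j^2X_{j+1}^{-\lambda}$, I obtain $X_jX_{j+1}^\lambda\ll X_i^3$; since $X_{j+1}\geq X_j$ this gives $X_j^{1+\lambda}\ll X_i^3$, whence $X_j\ll X_i^{3/(1+\lambda)}$. On the other hand, $\lambda>2/3$ and $i\in I$ allow Corollary~\ref{prelim:cor:XjLj} to give $X_j\geq X_{i+1}\gg X_i^2$. Chaining yields $X_i^{2-3/(1+\lambda)}\ll 1$, which is absurd for $\lambda>1/2$ (a fortiori for $\lambda>2/3$) because $X_i\to\infty$. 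The main obstacle is choosing the right projection from the orbit parametrization: the constraint $F_i/T_i^2=f_0/t_0^2$ is consistent with the natural size $|F_i|\asymp|T_i|^2$ and by itself does not produce an arithmetic contradiction, whereas $S_i^2V_i/T_i^3=v_0/t_0^3$ pins down the ratio $|S_j|/|S_i|$ at an order $(X_j/X_i)^3$ that is too large relative to $|S_i|\geq 1$ and $X_{i+1}\gg X_i^2$ when $\lambda>2/3$.
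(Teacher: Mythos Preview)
Your proof is correct and follows the same strategy as the paper: both reduce to the finiteness of common zeros of two coprime homogeneous polynomials in the weighted projective plane (the paper by explicit dehomogenization to $\bar P,\bar Q\in\bQ[F/T^2,S^2V/T^3]$, you by the geometric orbit decomposition under $(t,f,v)\mapsto(\lambda t,\lambda^2 f,\lambda^3 v)$), then pigeonhole onto a single ray and contradict the resulting constancy of $S_i^2V_i/T_i^3$. The only substantive difference is in the last step: the paper invokes Lemma~\ref{search:lemma:oTcube} ($|S_i^2V_i|=o(|T_i|^3)$, so the ratio tends to~$0$ yet is nonzero), whereas you re-derive the needed contradiction inline from Corollaries~\ref{prelim:cor:est_varphi}, \ref{prelim:cor:XjLj} and Proposition~\ref{prelim:prop:est_Phi}.
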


The proof uses the following estimate that we will also need later for other purposes.

\begin{lemma}
 \label{search:lemma:oTcube}
Suppose that $\lambda>2/3$.  Then, $|S_i^2V_i| = o(|T_i|^3)$ for $i\in I$.
\end{lemma}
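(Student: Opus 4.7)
The strategy is to reduce the estimate, via the asymptotics already established in Section~\ref{sec:prelim}, to a comparison of the form $X_i\delta_j = o(X_j\delta_i)$ that essentially appears inside the proof of Proposition~\ref{prelim:prop:est_Phi}. Let $j$ denote the successor of $i$ in $I$.

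First I would apply Corollary~\ref{prelim:cor:est_varphi} to get $|S_i|\asymp X_i^2\delta_i$ and $|V_i|=|S_j|\asymp X_j^2\delta_j$, together with Proposition~\ref{prelim:prop:est_Phi} applied to the triple $(\ux_i,\ux_i,\ux_j)$ to obtain $|T_i|\asymp X_iX_j\delta_i$; all of these hold for sufficiently large $i\in I$, since Theorem~\ref{prelim:thm:phi} ensures $\varphi(\ux_i)\neq 0$ so that $\delta_i\neq 0$. Substituting and simplifying,
\[
 \frac{|S_i|^2|V_i|}{|T_i|^3}
 \asymp \frac{(X_i^2\delta_i)^2\,(X_j^2\delta_j)}{(X_iX_j\delta_i)^3}
 = \frac{X_i\,\delta_j}{X_j\,\delta_i}.
\]
Thus it suffices to show that this last ratio tends to $0$ as $i$ runs to infinity through $I$.

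Next I would invoke the bounds $\delta_j\ll L_j$ and $\delta_i\gg X_i^{-2}$ from Corollary~\ref{prelim:cor:est_varphi} to reduce the claim to $X_i^3 L_j/X_j=o(1)$. This is settled by the same elementary inputs used in the proof of Proposition~\ref{prelim:prop:est_Phi}: Corollary~\ref{prelim:cor:XjLj}, which requires $\lambda\geq 2/3$ and $i\in I$, gives both $X_{i+1}\gg X_i^2$ and $L_i\ll L_{i-1}^2$; combined with $L_{i-1}\ll X_i^{-\lambda}$ (which follows from the minimality of $\ux_i$ applied to the point coming from the hypothesis \eqref{prelim:mainhyp} at $X$ slightly less than $X_i$), this yields $L_j\le L_i\ll X_i^{-2\lambda}\ll X_i^{-4/3}$. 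Together with $X_j\ge X_{i+1}\gg X_i^2$ this gives
\[
 \frac{X_i^3 L_j}{X_j}\ll \frac{X_i^3\cdot X_i^{-4/3}}{X_i^2}=X_i^{-1/3}=o(1),
\]
proving the lemma.

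I do not expect any serious obstacle: the statement is essentially a repackaging, with the $|T_i|$ power in the denominator, of the comparison $X_i\delta_j=o(X_j\delta_i)$ that was already a key ingredient of Proposition~\ref{prelim:prop:est_Phi}. The only care required is to combine the various $\asymp$-estimates correctly and to keep in mind that they are valid only for $i\in I$ sufficiently large, a qualification that is harmless since $I$ is infinite.
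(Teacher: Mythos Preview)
Your proof is correct and follows essentially the same route as the paper: both arguments combine the asymptotics $|S_i|\asymp X_i^2\delta_i$, $|V_i|\asymp X_j^2\delta_j$, $|T_i|\asymp X_iX_j\delta_i$ from Corollary~\ref{prelim:cor:est_varphi} and Proposition~\ref{prelim:prop:est_Phi} with the growth estimates of Corollary~\ref{prelim:cor:XjLj}. The only cosmetic difference is that the paper shortcuts via $|T_i|\asymp (X_j/X_i)|S_i|$ and the integrality bound $|S_i|\ge 1$, arriving at $|S_i^2V_i/T_i^3|\ll X_j^{-1/6}$, whereas you pass through the ratio $X_i\delta_j/(X_j\delta_i)$ and land on the equivalent bound $X_i^{-1/3}$.
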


\begin{proof}
For consecutive elements $i<j$ in $I$ with $i$ large enough so that $S_i$, $T_i$ and $V_i$ are non-zero, Proposition \ref{prelim:prop:est_Phi} gives $|T_i| \asymp (X_j/X_i)|S_i|$.  Then, as $|S_i|\ge 1$, this yields $|S_i^2V_i/T_i^3| \asymp (X_i/X_j)^3 |V_i/S_i| \le (X_i/X_j)^3 |V_i|$.  By Corollary \ref{prelim:cor:est_varphi}, we also have $|V_i|\ll X_j^2X_{j+1}^{-\lambda} \le X_j^{2-\lambda} \le X_j^{4/3}$, while Corollary \ref{prelim:cor:XjLj} gives $X_i^2 \ll X_{i+1}\le X_j$.  Combining these estimates, we conclude that $|S_i^2V_i/T_i^3| \ll (X_i/X_j)^3 X_j^{4/3} \ll X_j^{-1/6}=o(1)$.
\end{proof}

\begin{proof}[Proof of Proposition \ref{search:prop:remark}]
The hypothesis implies that the de-homogenized polynomials $\bar{P}=P(1,F/T^2,S^2V/T^3)$ and $\bar{Q}=Q(1,F/T^2,S^2V/T^3)$ are relatively prime elements of $\bQ[F/T^2,S^2V/T^3]$ viewed as polynomials in $2$ variables.  Therefore, as such, they have at most finitely many common zeros in $\Qbar^2$.

Since $\lambda>2/3$, it follows from Proposition \ref{prelim:prop:est_Phi} that there exists an index $i_0$ such that $S_i$, $T_i$ and $V_i$ are all non-zero for each $i\in I$ with $i\ge i_0$.  For those $i$, the ratio $S_i^2V_i/T_i^3$ is a non-zero rational number and, by the previous lemma, it tends to $0$ as $i\to\infty$.  Thus, there are only finitely many values of $i\ge i_0$ for which $(F_i/T_i^2,S_i^2V_i/T_i^3)$ is a common zero of $\bar{P}$ and $\bar{Q}$, and so there are only finitely many $i\in I$ such that $P$ and $Q$ vanish at $(T_i,F_i,S_i^2V_i)$.
\end{proof}

%
%

\section{Non-vanishing of F}
\label{sec:NVF}

The main goal of this section is to show that $F_i\neq 0$ for any sufficiently large $i\in I$ if $\lambda >(5-\sqrt{13})/2$.  The following result is a first step.

\begin{prop}
\label{NVF:prop:allF0}
Suppose that $F_i=0$ for all but finitely many $i\in I$.  Then $\lambda\le 2/3$.
\end{prop}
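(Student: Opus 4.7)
The plan is to argue by contradiction: assume $\lambda > 2/3$ and that $F_i = 0$ for all but finitely many $i \in I$, and derive a contradiction from the growth of the minimal points.

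Since $\lambda > 2/3$, Theorem \ref{prelim:thm:phi} gives $\varphi(\ux_i) \neq 0$ for all large $i \in I$, so Proposition \ref{search:prop:Fzero} applies under our hypothesis to yield $|q_i| \le 3|\varphi(\ux_i)|$ for all sufficiently large $i \in I$. Combining this with Proposition \ref{search:prop:qi}(a), which gives $|q_i| \asymp X_j / X_{i+1}$, together with Corollary \ref{prelim:cor:est_varphi}, which implies $|\varphi(\ux_i)| \asymp X_i^2 \delta_i \ll X_i^2 L_i \ll X_i^2 X_{i+1}^{-\lambda}$, we get the fundamental bound
\[
 X_j \ll X_i^2 \, X_{i+1}^{1-\lambda}
\]
for every pair of consecutive large elements $i<j$ of $I$.

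The next step uses Corollary \ref{prelim:cor:XjLj} (valid since $\lambda \ge 2/3$ and $i \in I$) to substitute $X_i^2 \ll X_{i+1}$, simplifying the above to $X_j \ll X_{i+1}^{2-\lambda}$. Iterating along consecutive pairs in $I$, and combining with the lower bound $X_{j+1} \gg X_j^2$ (from the same corollary applied at $j$) as well as the upper bound $X_{j+1} \ll X_j^{2/\lambda}$ (coming from $\delta_j \gg X_j^{-2}$), one sets up a recursion for the growth of $X_{i_n}$ as $n \to \infty$. My expectation is that this recursion forces an incompatibility precisely when $\lambda > 2/3$, giving the desired contradiction.

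The hard part will be to close the argument tightly enough: a naive combination of the above estimates only yields $\lambda \le \sqrt{3}-1 \cong 0.732$, reproducing Corollary \ref{prelim:cor:first_est_lambda}. To push the bound down to $\lambda \le 2/3$, I anticipate using in addition the divisibility $q_i \mid 3 V_i$ from Proposition \ref{search:prop:Fzero}, which gives the quadratic constraint $|q_i|^2 \le 9|S_i V_i|$, together with the identity $H(W_i) = H(W_{j-1})$ from Corollary \ref{prelim:cor:HWi} (so that $X_{i+1}L_i \asymp X_j L_{j-1}$) and the decay $L_j \ll L_{j-1}^2$ from Corollary \ref{prelim:cor:XjLj}. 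The main obstacle is ensuring that these constraints combine cleanly at the critical exponent $\lambda = 2/3$, where the basic inequalities $X_i^2 \ll X_{i+1}$ and $X_{i+1} \ll X_i^{2/\lambda}$ become sharp; the contradiction should emerge from a tight two-sided sandwich of $\log X_{j+1}/\log X_{i+1}$ that is only consistent for $\lambda \le 2/3$.
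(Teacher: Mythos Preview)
Your outline has a genuine gap: it never uses the \emph{analytic} consequence of $F_i=0$, only the algebraic divisibility consequences from Proposition~\ref{search:prop:Fzero}. These divisibilities are too weak to force $\lambda\le 2/3$. Indeed, every constraint you list---$|q_i|\ll|S_i|$, $|q_i|^2\ll|S_iV_i|$, $|q_i|\asymp X_j/X_{i+1}$, $|S_i|\ll X_i^2X_{i+1}^{-\lambda}$, $X_i^2\ll X_{i+1}\ll X_i^{2/\lambda}$, $X_{i+1}L_i\asymp X_jL_{j-1}$, $L_j\ll L_{j-1}^2$---is satisfied by the abstract pattern $j=i+1$, $X_{i+1}\asymp X_i^2$, $|S_i|\asymp 1$, $|q_i|\asymp 1$, for any $\lambda\in(2/3,1)$. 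So no recursion built only from these ingredients can yield a contradiction when $\lambda>2/3$; your anticipated ``tight two-sided sandwich of $\log X_{j+1}/\log X_{i+1}$'' simply does not close.

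What is missing is Corollary~\ref{F:cor:Fzero}: when $F_i=0$, the Taylor expansion of $F$ (Corollary~\ref{F:cor:est_Fij}) forces the leading term $X_i^3X_j\delta_i\delta_j$ to be dominated by the error $X_j^2L_i^4$, giving $|S_iS_j|\ll X_i^{-1}X_{i+1}^{-4\lambda}X_j^3$. The paper combines this with $|q_i|\asymp X_j/X_{i+1}$ and $|S_i|\ll(X_i/X_{i+1}^{1/3})^2$ to obtain $|S_i|^{3/2}|S_j|\ll|q_i|^3X_{i+1}^{-4\epsilon}$ where $\epsilon=\lambda-2/3$; then, using $|q_i|\le 3|S_i|$ and $|q_j|\le 3|S_j|$, this becomes the recursive bound
\[
 |q_j|\ll |q_i|^{3/2}X_{i+1}^{-4\epsilon}.
\]
Since $\log X_j\ge(2+o(1))\log X_i$, the ratio $\log|q_i|/\log X_i$ decays by a factor $\le 3/4+o(1)$ at each step, hence tends to $0$; so eventually $|q_i|\le X_i^\epsilon$, and then the recursion gives $|q_j|\ll X_i^{3\epsilon/2}X_{i+1}^{-4\epsilon}\to 0$, impossible for a nonzero integer. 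This use of Corollary~\ref{F:cor:Fzero} is precisely what rules out the abstract pattern above (it forces $1\ll X_i^{5-8\lambda}$ in that example), and it is the piece your plan lacks.
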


\begin{proof}
We proceed by contradiction assuming, on the contrary, that $\lambda>2/3$.  Then, according to Theorem \ref{prelim:thm:phi}, we have $S_i\neq 0$ for all but finitely many indices $i$ and so there exists an integer $i_0$ such that $F_i=0$ and $S_i\neq 0$ for all $i\ge i_0$.  Put $\epsilon=\lambda-2/3$.  For each pair of consecutive elements $i<j$ in $I$ with $i\ge i_0$, Corollary \ref{F:cor:Fzero} together with the estimate $|q_i|\asymp X_j/X_{i+1}$ of Proposition \ref{search:prop:qi} yields
\[
 |S_iS_j| \ll X_i^{-1} X_{i+1}^{-8/3-4\epsilon} X_j^3
          \asymp |q_i|^3 (X_{i+1}^{1/3}/X_i) X_{i+1}^{-4\epsilon}.
\]
As $|S_i| \ll X_i^2X_{i+1}^{-\lambda} \ll (X_i/X_{i+1}^{1/3})^2$ (by Corollary \ref{prelim:cor:est_varphi}), we deduce that
\[
 |S_iS_j| \ll |q_i|^3 |S_i|^{-1/2}X_{i+1}^{-4\epsilon}.
\]
By Proposition \ref{search:prop:Fzero}, we also have $|q_i|\le 3|S_i|$ and $|q_j|\le 3|S_j|$.  Thus the above estimate yields
\begin{equation}
\label{NVF:prop:allF0:eq1}
 |q_j| \ll |q_i|^{3/2}X_{i+1}^{-4\epsilon}.
\end{equation}
In particular, for $i$ large enough, we have
\[
 \log |q_j| \le \frac{3}{2} \log |q_i|.
\]
On the other hand, since $\lambda\ge 2/3$, we also have $X_i^2 \ll X_{i+1} \le X_j$ by Corollary \ref{prelim:cor:XjLj}, thus $\log X_j \ge (2+o(1))\log X_i$ as $i$ goes to infinity in $I$, and so
\[
 \frac{\log |q_j|}{\log X_j} \le \left(\frac{3}{4}+o(1)\right) \frac{\log |q_i|}{\log X_i}
\]
showing that the ratio $\log |q_i|/\log X_i$ tends to $0$ as $i$ goes to infinity in $I$.  In particular, we must have $1\le |q_i|\le X_i^\epsilon$ for each sufficiently large $i\in I$, in contradiction with \eqref{NVF:prop:allF0:eq1}.
\end{proof}

\begin{theorem}
\label{NVF:thm}
Suppose that $\lambda>(5-\sqrt{13})/2 \cong 0.697$.  Then, we have $F_i\neq 0$ for each sufficiently large element $i$ of $I$.
\end{theorem}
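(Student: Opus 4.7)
The plan is to argue by contradiction: suppose $F_i = 0$ for infinitely many $i \in I$, and derive $\lambda \le (5-\sqrt{13})/2$, contradicting the hypothesis. Since $\lambda > 2/3$, Theorem \ref{prelim:thm:phi} gives $|S_k| \ge 1$ for every sufficiently large $k$; in particular, writing $j = j(i)$ for the successor of $i$ in $I$, we have $|S_i|, |V_i| = |S_j| \ge 1$ once $i$ is large enough.

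I would first split according to the behaviour of $F$ at the successor. If $F_{j(i)} = 0$ holds for every sufficiently large $i \in I$ with $F_i = 0$, then a trivial induction shows $F_k = 0$ for all large $k \in I$, and Proposition \ref{NVF:prop:allF0} gives $\lambda \le 2/3$, an immediate contradiction. So I may assume there exist arbitrarily large $i \in I$ with $F_i = 0$ \emph{and} $F_{j(i)} \ne 0$, and the task reduces to ruling out this configuration under $\lambda > (5-\sqrt{13})/2$.

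For such an $i$, set $j = j(i)$ and $k = j(j)$. From $F_i = 0$ and Proposition \ref{search:prop:Fzero} one has $|q_i| \le 3|S_i|$, which combined with $|S_i| \ll X_i^2X_{i+1}^{-\lambda}$ (Corollary \ref{prelim:cor:est_varphi}) and $|q_i| \asymp X_j/X_{i+1}$ (Proposition \ref{search:prop:qi}) yields $X_j \ll X_i^2X_{i+1}^{1-\lambda}$; and Corollary \ref{F:cor:Fzero} paired with $|S_iV_i| \ge 1$ gives $X_j^3 \gg X_iX_{i+1}^{4\lambda}$. From $F_j \ne 0$, Lemma \ref{F:lemma:tech_alpha_beta} applied to $(j,k)$ provides $X_k \gg X_{j+1}^{2\lambda}$ together with $X_k \ll X_{k+1}^\alpha$ and $X_{k+1} \ll X_{j+1}^\beta$. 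Feeding these into the general estimates $X_i \ll X_{i+1}^{(1-\lambda)/\lambda}$ (Corollary \ref{prelim:cor:approximable}), $X_{j+1} \ll X_j^{2/\lambda}$ (from $L_j \gg X_j^{-2}$, Corollary \ref{prelim:cor:est_varphi}), and $X_j \ll (X_{i+1}X_{j+1})^{1-\lambda}$ (Proposition \ref{prelim:prop:height}), one should be able to express every quantity in terms of a single variable and derive a power inequality that fails precisely when $\lambda > (5-\sqrt{13})/2$.

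The main obstacle is identifying the combination of inequalities that lands exactly on the claimed threshold. The value $(5-\sqrt{13})/2$ is the smaller positive root of $\lambda^2-5\lambda+3=0$, equivalently the positive root of $3\theta^2+\theta-1=0$ with $\theta=(1-\lambda)/\lambda$, the very polynomial driving the chain in the proof of Proposition \ref{F:prop:Fnotzero}. This strongly suggests that the correct argument mirrors that chain while grafting in the tighter conditional Corollary \ref{F:cor:Fzero} bound in place of the unconditional factor $\delta_i \ll X_{i+1}^{-\lambda}$. A naive substitution of only a subset of these constraints recovers only $\lambda \le 5/7$ (as in Corollary \ref{F:cor:upperbound_lambda}); the sharper threshold requires interweaving the $F_i = 0$ algebra with the $F_j \ne 0$ consequences of Lemma \ref{F:lemma:tech_alpha_beta} so that the exponents collapse onto $3\theta^2+\theta-1$.
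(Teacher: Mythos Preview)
Your outline is on the right track in reducing via Proposition~\ref{NVF:prop:allF0} to a transition configuration, but the transition you choose is the wrong one, and this is why the final computation refuses to close. You look for $i$ with $F_i=0$ and $F_j\neq 0$; the paper instead takes $i$ with $F_i\neq 0$ and $F_j=0$ (both transitions occur infinitely often once Proposition~\ref{NVF:prop:allF0} rules out $F_k=0$ for all large $k$). The point is that Lemma~\ref{F:lemma:tech_alpha_beta}, applied to the pair $(i,j)$ with $F_i\neq 0$, delivers the \emph{upper} bound $X_j \ll X_{j+1}^\alpha$, while the $F_j=0$ algebra (Corollary~\ref{F:cor:Fzero} together with $|q_j|\le 3|S_j|$ from Proposition~\ref{search:prop:Fzero}, used to eliminate $X_k$) yields the \emph{lower} bound $X_j \gg X_{j+1}^{2\lambda-1}$. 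These collide on the same ratio and force $2\lambda-1\le \alpha$, i.e.\ $8\lambda^2-7\lambda+1\le 0$, hence $\lambda\le (7+\sqrt{17})/16\approx 0.695$, which is even a bit stronger than the stated threshold.

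In your configuration the same $F=0$ algebra, shifted to index $i$, gives $X_i \gg X_{i+1}^{2\lambda-1}$. But now you would need a matching upper bound $X_i\ll X_{i+1}^\alpha$, and that requires $F_h\neq 0$ for the predecessor $h$ of $i$ --- information you do not have. The generic bound $X_i\ll X_{i+1}^{(1-\lambda)/\lambda}$ from Corollary~\ref{prelim:cor:approximable} only yields $\lambda\le 1/\sqrt{2}$. Meanwhile the $F_j\neq 0$ inputs you list from Lemma~\ref{F:lemma:tech_alpha_beta} all concern $X_{j+1},X_k,X_{k+1}$; they sit downstream of the $F_i=0$ data and on their own only reproduce $\lambda\le 5/7$ (this is exactly Corollary~\ref{F:cor:upperbound_lambda}). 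There is no interlocking that collapses onto $3\theta^2+\theta-1$: that polynomial governs Proposition~\ref{F:prop:Fnotzero}, not this theorem, and the relevant quadratic here is $8\lambda^2-7\lambda+1$. The fix is simply to reverse the roles: take the nonzero-to-zero transition and run the two-line comparison above.
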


\begin{proof}
Assume, on the contrary, that $F_i=0$ for infinitely many values of $i$.  Then, by the previous proposition, there exist arbitrarily large elements $i$ of $I$ such that, upon denoting by $j$ the next element of $I$, we have $F_i\neq0$ and $F_j=0$.  We may further assume, by Theorem \ref{prelim:thm:phi}, that $S_j\neq 0$ and $S_k\neq 0$ where $k$ is the next element of $I$ after $j$.  Since $F_j=0$, Corollary \ref{F:cor:Fzero} gives
\begin{equation*}
\label{NVF:thm:eq1}
 |S_j| \le |S_jS_k| \ll X_j^{-1}X_{j+1}^{-4\lambda}X_k^3.
\end{equation*}
By Propositions \ref{search:prop:qi} and \ref{search:prop:Fzero}, we also have $X_k \asymp |q_j| X_{j+1} \ll |S_j| X_{j+1}$.  Substituting this upper bound for $X_k$ in the previous estimate and then using the standard upper bound $|S_j|\ll X_j^2X_{j+1}^{-\lambda}$ from Corollary \ref{prelim:cor:est_varphi}, we obtain
\[
 1 \ll |S_j|^2 X_j^{-1} X_{j+1}^{3-4\lambda} \ll X_j^3 X_{j+1}^{3-6\lambda},
\]
and so $X_j\gg X_{j+1}^{2\lambda-1}$.  On the other hand, since $F_i\neq 0$, Lemma \ref{F:lemma:tech_alpha_beta} yields $X_j\ll X_{j+1}^\alpha$ where $\alpha=2\lambda(1-\lambda)/(3\lambda-1)$.  As $j$ can be made arbitrarily large, we deduce that $2\lambda -1 \le \alpha$ and so $\lambda \le (7+\sqrt{17})/16 \cong 0.695$, a contradiction.
\end{proof}

\begin{cor}
\label{NVF:cor:oT}
Suppose that $\lambda>(5-\sqrt{13})/2 \cong 0.697$.  Then, we have $|F_iS_i^2V_i| = o(|T_i|)$ as $i$ goes to infinity in $I$.
\end{cor}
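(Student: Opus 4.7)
The plan is to combine two estimates: the decay $|S_i^2V_i|/|T_i|\ll X_{j+1}^{-\mu}$ (implicit in the proof of Proposition~\ref{F:prop:Fnotzero}, with $\mu=(5\lambda-3-\lambda^2)/\lambda^2>0$ precisely when $\lambda>(5-\sqrt{13})/2$) together with the growth bound $|F_i|\ll X_j^2L_i^4$ from Proposition~\ref{F:prop:Fnotzero}, valid once Theorem~\ref{NVF:thm} guarantees $F_i\neq 0$ for all large $i\in I$. The non-trivial task, compared with the weaker statement $|S_i^2V_i|=o(|T_i|)$, is to show that the extra $|F_i|$ factor can be absorbed into the decay $X_{j+1}^{-\mu}$.

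First I would invoke Theorem~\ref{NVF:thm} to fix $i_0\in I$ such that $F_i\neq 0$ for every $i\in I$ with $i\ge i_0$. Writing $j$ for the successor of such an $i$ in $I$ and using the asymptotics of Proposition~\ref{prelim:prop:est_Phi} and Corollary~\ref{prelim:cor:est_varphi}, I compute
\[
|S_i^2V_i|/|T_i|\asymp X_i^3X_j\delta_i\delta_j,
\]
then bound the right-hand side by $X_{j+1}^{-\mu}$ by repeating the chain of inequalities from the proof of Proposition~\ref{F:prop:Fnotzero} (namely $\delta_k\ll X_{k+1}^{-\lambda}$, $X_i\ll X_{i+1}^{\theta}$ with $\theta=(1-\lambda)/\lambda$, $X_{i+1}\le X_j$, and $X_j\ll X_{j+1}^{\theta}$). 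Multiplying by $|F_i|\ll X_j^2L_i^4\ll X_j^2X_{i+1}^{-4\lambda}$ yields
\[
|F_iS_i^2V_i|/|T_i|\ll X_j^2 X_{i+1}^{-4\lambda} X_{j+1}^{-\mu}.
\]

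The remaining step is to show this upper bound tends to zero. For that I would use the lower bound $X_{i+1}\gg X_j^{1/(1-\lambda)}/X_{j+1}$ implied by Proposition~\ref{prelim:prop:height} to replace $X_{i+1}^{-4\lambda}$, and then apply the upper bound $X_{j+1}\ll X_j^{2/\lambda}$ from Lemma~\ref{F:lemma:tech_alpha_beta} to express everything as a single power of $X_j$. After simplification this reduces the task to verifying that a rational function of $\lambda$, whose numerator is an explicit polynomial, is strictly negative on the interval $\lambda\in\bigl((5-\sqrt{13})/2,\,5/7\bigr]$.

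The main obstacle is this algebraic verification, which is delicate near the lower end of the range: the contributions of the $|F_i|$ growth and the $X_{j+1}^{-\mu}$ decay almost cancel when $\lambda$ is close to $(5-\sqrt{13})/2$. If a single combination of estimates does not suffice in the narrow subinterval nearest the threshold, I would split into cases according to the position of $X_{j+1}$ within its admissible range $[X_j^{\lambda/(1-\lambda)},X_j^{2/\lambda}]$, applying the tighter bound $X_j\ll X_{j+1}^{\alpha}$ from Lemma~\ref{F:lemma:tech_alpha_beta} (with $\alpha=2\lambda(1-\lambda)/(3\lambda-1)$) in the regime of large $X_{j+1}$ and Corollary~\ref{prelim:cor:approximable} together with the sharper $\delta_j\ll L_{j-1}^2\le L_i^2$ (available since $\lambda\ge 2/3$) in the regime of small $X_{j+1}$, so that in either case the combined exponent becomes strictly negative.
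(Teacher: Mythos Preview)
Your two-step approach—first bounding $|S_i^2V_i|/|T_i|$ purely in terms of $X_{j+1}$ and then separately bounding $|F_i|$—is lossy: in deriving $X_{j+1}^{-\mu}$ you already used the crude inequality $X_{i+1}\le X_j$, and then in handling $|F_i|$ you use a \emph{lower} bound for $X_{i+1}$ in terms of $X_j$. The two uses pull in opposite directions, and the resulting exponent of $X_j$ turns out positive (about $+0.6$ at $\lambda=0.7$), so the bound does not tend to zero. The cleaner route is the paper's: keep everything as a single product $X_i^3X_{i+1}^{-5\lambda}X_j^3X_{j+1}^{-\lambda}$ and eliminate variables consistently.

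Even with that fix, your fallback cannot close the argument on the whole range $\lambda>(5-\sqrt{13})/2$. All the bounds you list (Corollary~\ref{prelim:cor:approximable}, Lemma~\ref{F:lemma:tech_alpha_beta} applied at the pair $(i,j)$, and $\delta_j\ll L_i^2$) control $X_j,X_{j+1}$ in terms of $X_{i+1}$ but only give the weak estimate $X_i\ll X_{i+1}^{\theta}$ with $\theta=(1-\lambda)/\lambda$. Setting up the resulting linear program in $\log X_{i+1},\log X_j,\log X_{j+1}$ shows the worst-case exponent is strictly positive for $\lambda$ between $(5-\sqrt{13})/2$ and roughly $0.7$, so no case split on $X_{j+1}$ can save you. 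The missing idea is to bring in the \emph{predecessor} $h$ of $i$ in $I$: since Theorem~\ref{NVF:thm} also gives $F_h\neq 0$, Lemma~\ref{F:lemma:tech_alpha_beta} applied to the pair $(h,i)$ yields the sharper $X_i\ll X_{i+1}^{\alpha}$ with $\alpha=2\lambda(1-\lambda)/(3\lambda-1)<\theta$. With this bound and $X_{j+1}\ll X_{i+1}^{\beta}$, the combined exponent becomes $\tau=3\alpha+(3-8\lambda)+(3-4\lambda)\beta$, which is negative precisely for $\lambda>23/33$, covering the full hypothesis.
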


\begin{proof}
According to Proposition \ref{prelim:prop:est_Phi}, we have $T_i\neq 0$ and $|T_i|\asymp (X_j/X_i) |S_i|$ for each large enough $i\in I$.  By the theorem, we also have $F_i\neq 0$.  Then, upon denoting by $j$ the successor of $i$ in $I$, Proposition \ref{F:prop:Fnotzero} gives $|F_i|\ll X_j^2L_i^4$.  Combining this with the estimates of Corollary \ref{prelim:cor:est_varphi} for $|S_i|$ and $|V_i|=|S_j|$, we obtain
\[
 \left|\frac{F_iS_i^2V_i}{T_i}\right|
 \asymp \frac{X_i}{X_j}|F_iS_iV_i|
 \ll \frac{X_i}{X_j}(X_j^2L_i^4)(X_i^2L_i)(X_j^2L_j)
 \ll X_i^3X_{i+1}^{-5\lambda}X_j^3X_{j+1}^{-\lambda}.
\]
Assuming $i$ large enough, we also have $F_h\neq 0$ where $h$ stands for the predecessor of $i$ in $I$.  Then Lemma \ref{F:lemma:tech_alpha_beta} gives $X_i\ll X_{i+1}^\alpha$ (since $F_h\neq 0$) and $X_{j+1}\ll X_{i+1}^\beta$ (since $F_i\neq 0$).  We first use the estimate $X_j \ll (X_{i+1}X_{j+1})^{1-\lambda}$ from Proposition \ref{prelim:prop:height} to eliminate $X_j$ from our upper bound for $|F_iS_i^2V_i/T_i|$, and then we use the preceding estimates to eliminate $X_i$ and $X_{j+1}$. This yields $|F_iS_i^2V_i/T_i| \ll X_{i+1}^\tau$ where
\[
 \tau = 3\alpha+(3-8\lambda)+(3-4\lambda)\beta
      = \frac{\lambda(2\lambda-1)(23-33\lambda)}{(3\lambda-1)(3\lambda-2)}
\]
is negative since $\lambda >23/33=0.\overline{69}$.
\end{proof}

\begin{cor}
\label{NVF:cor:oq}
Suppose that $\lambda\ge 0.6985$.  Then, $|S_i^2V_i| = o(|q_i|)$ for $i\in I$.
\end{cor}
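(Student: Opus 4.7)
The plan is to show that $|S_i^2V_i|/|q_i|\to 0$ as $i\to\infty$ in $I$ by bounding this ratio by a strictly negative power of $X_{i+1}$. By Theorem \ref{NVF:thm}, for $i\in I$ large enough we have $F_i\neq 0$ and $F_h\neq 0$, where $h$ denotes the predecessor of $i$ in $I$; hence the estimates of Lemma \ref{F:lemma:tech_alpha_beta} become available for both pairs $(h,i)$ and $(i,j)$, with $j$ the successor of $i$ in $I$.

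Combining $|S_i|\ll X_i^2X_{i+1}^{-\lambda}$ and $|V_i|=|S_j|\ll X_j^2X_{j+1}^{-\lambda}$ from Corollary \ref{prelim:cor:est_varphi} with $|q_i|\asymp X_j/X_{i+1}$ from Proposition \ref{search:prop:qi}(a) yields
\[
 \frac{|S_i^2V_i|}{|q_i|}\ll X_i^4 X_{i+1}^{1-2\lambda} X_j X_{j+1}^{-\lambda}.
\]
I eliminate $X_j$ using $X_j\ll(X_{i+1}X_{j+1})^{1-\lambda}$ from Proposition \ref{prelim:prop:height}, obtaining $\ll X_i^4 X_{i+1}^{2-3\lambda}X_{j+1}^{1-2\lambda}$. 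Since $1-2\lambda<0$, I need a lower bound on $X_{j+1}$: combining $X_j\gg X_{i+1}^{2\lambda}$ (Proposition \ref{F:prop:Fnotzero}, using $F_i\neq 0$) with the same upper bound on $X_j$ gives $X_{j+1}\gg X_{i+1}^{(3\lambda-1)/(1-\lambda)}$. Substituting this and simplifying reduces the exponent of $X_{i+1}$ to $(1-3\lambda^2)/(1-\lambda)$. Finally, $X_i\ll X_{i+1}^\alpha$ from Lemma \ref{F:lemma:tech_alpha_beta} applied to $(h,i)$, with $\alpha=2\lambda(1-\lambda)/(3\lambda-1)$, leads after a short algebraic manipulation to
\[
 \frac{|S_i^2V_i|}{|q_i|}\ll X_{i+1}^{E(\lambda)},\qquad E(\lambda):=\frac{8\lambda(1-\lambda)}{3\lambda-1}+\frac{1-3\lambda^2}{1-\lambda}=\frac{-(\lambda^3+13\lambda^2-11\lambda+1)}{(3\lambda-1)(1-\lambda)}.
\]

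The main task is then the numerical verification that the cubic $f(\lambda):=\lambda^3+13\lambda^2-11\lambda+1$ is strictly positive on $[0.6985,1]$, so that $E(\lambda)<0$. Direct evaluation gives $f(0.6985)\approx 2.5\times 10^{-5}>0$, and $f'(\lambda)=3\lambda^2+26\lambda-11$ is visibly positive on $[0.6985,1]$, so $f$ is increasing and stays positive throughout. Hence $E(\lambda)<0$, and $|S_i^2V_i|=o(|q_i|)$ follows since $X_{i+1}\to\infty$. The threshold $0.6985$ is essentially tight for this approach, as it lies just above the relevant real root of $f$.
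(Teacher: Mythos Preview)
Your proof is correct and follows essentially the same route as the paper's: both bound $|S_i^2V_i|/|q_i|$ by $X_i^4X_{i+1}^{2-3\lambda}X_{j+1}^{1-2\lambda}$, then eliminate $X_i$ via $X_i\ll X_{i+1}^\alpha$ and $X_{j+1}$ via the lower bound $X_{j+1}\gg X_{i+1}^{(3\lambda-1)/(1-\lambda)}=X_{i+1}^{2\lambda/\alpha}$ (which the paper cites directly from Lemma~\ref{F:lemma:tech_alpha_beta} rather than rederiving), arriving at the same exponent $-(\lambda^3+13\lambda^2-11\lambda+1)/((3\lambda-1)(1-\lambda))$. Your added verification that the cubic is positive at $\lambda=0.6985$ and increasing there makes explicit what the paper simply asserts.
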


\begin{proof}
The proof is similar to that of Corollary \ref{NVF:cor:oT}.  Using the estimate $|q_i| \asymp X_j/X_{i+1}$ from Proposition \ref{search:prop:qi}, we find
\[
 \left|\frac{S_i^2V_i}{q_i}\right|
 \ll \frac{X_{i+1}}{X_j}(X_i^2L_i)^2(X_j^2L_j)
 \ll X_i^4X_{i+1}^{1-2\lambda}X_jX_{j+1}^{-\lambda}
 \ll X_i^4X_{i+1}^{2-3\lambda}X_{j+1}^{1-2\lambda},
\]
and so $|S_i^2V_i/q_i| \ll X_{i+1}^\tau$ with
\[
 \tau = 4\alpha+(2-3\lambda)+(1-2\lambda)\frac{2\lambda}{\alpha}
      = -\frac{\lambda^3+13\lambda^2-11\lambda+1}{(3\lambda-1)(1-\lambda)}
      < 0.
\]
The difference with the proof of Corollary \ref{NVF:cor:oT} is that we used the lower bound $X_{j+1} \gg X_{i+1}^{2\lambda/\alpha}$ of Lemma \ref{F:lemma:tech_alpha_beta} to eliminate $X_{j+1}$, as it appears with the negative exponent $1-2\lambda$.
\end{proof}

The last result below motivates the non-vanishing results for $D^{(2)}$ that we prove in the next section.

\begin{prop}
\label{NVF:prop:D2nonzero}
Suppose that $\lambda\ge 0.6985$.  For any pair of consecutive elements $i<j$ of $I$ satisfying $F_i^3+27(S_i^2V_i)^2\neq 0$, with $i$ large enough so that $F_i\neq 0$, we have
\[
 |q_i|^2 \ll |F_i|^3
 \et
 X_{i+1} \ll X_j^{2/(6\lambda-1)} \ll X_{j+1}^{(2-2\lambda)/(8\lambda-3)}.
\]
If there are infinitely many such pairs $(i,j)$, then $\lambda < 0.709$.
\end{prop}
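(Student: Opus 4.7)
The plan is to bootstrap from the divisibility bound of Proposition \ref{search:prop:D2D3D6}(a) to a genuine upper bound $|q_i|^2\ll |F_i|^3$, then translate this into a height inequality, and finally close the argument with the reverse inequality supplied by Lemma \ref{F:lemma:tech_alpha_beta}.

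First, I would invoke Proposition \ref{search:prop:D2D3D6}(a), which under the hypothesis $D^{(2)}_i=F_i^3+27(S_i^2V_i)^2\neq 0$ yields $|q_i|^2\ll |F_i|^3+|S_i^2V_i|^2$. Since $\lambda\ge 0.6985$, Corollary \ref{NVF:cor:oq} gives $|S_i^2V_i|=o(|q_i|)$, so the term $|S_i^2V_i|^2$ can be absorbed into the left side: for $i$ sufficiently large we get $|q_i|^2\ll |F_i|^3$. Now use $|q_i|\asymp X_j/X_{i+1}$ from Proposition \ref{search:prop:qi}(a), and, since $F_i\neq 0$, the estimate $|F_i|\ll X_j^2L_i^4\ll X_j^2X_{i+1}^{-4\lambda}$ from Proposition \ref{F:prop:Fnotzero}. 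Substituting gives $X_j^2X_{i+1}^{-2}\ll X_j^6X_{i+1}^{-12\lambda}$, which rearranges to $X_{i+1}^{(6\lambda-1)/2}\ll X_j$, i.e.\ $X_{i+1}\ll X_j^{2/(6\lambda-1)}$.

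Next, I would combine this with the estimate $X_j\ll (X_{i+1}X_{j+1})^{1-\lambda}$ from Proposition \ref{prelim:prop:height}. Substituting the lower bound on $X_j$ gives
\[
X_{i+1}^{(6\lambda-1)/2}\ll X_{i+1}^{1-\lambda}X_{j+1}^{1-\lambda},
\]
so $X_{i+1}^{(8\lambda-3)/2}\ll X_{j+1}^{1-\lambda}$, hence $X_{i+1}\ll X_{j+1}^{(2-2\lambda)/(8\lambda-3)}$. This gives the second chain of inequalities in the statement; note that $8\lambda-3>0$ since $\lambda\ge 0.6985$.

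Finally, suppose there are infinitely many such pairs $(i,j)$; in particular $X_{i+1}\to\infty$ along the associated subsequence of $I$. Since $F_i\neq 0$, Lemma \ref{F:lemma:tech_alpha_beta} supplies $X_{j+1}\ll X_{i+1}^{\beta}$ with $\beta=2(1-\lambda)/(3\lambda-2)$. Chaining this with $X_{i+1}\ll X_{j+1}^{(2-2\lambda)/(8\lambda-3)}$ yields $X_{i+1}\ll X_{i+1}^{\beta(2-2\lambda)/(8\lambda-3)}$, which in the limit forces
\[
\frac{4(1-\lambda)^2}{(3\lambda-2)(8\lambda-3)}\ge 1,
\]
equivalently $20\lambda^2-17\lambda+2\le 0$. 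The roots of this quadratic are $(17\pm\sqrt{129})/40$, so $\lambda\le (17+\sqrt{129})/40\cong 0.7089<0.709$, as required.

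The argument is really just careful bookkeeping once Proposition \ref{search:prop:D2D3D6}(a) is in hand; the only subtle point is the absorption step, where one must confirm that Corollary \ref{NVF:cor:oq} applies (it does, since the threshold $\lambda\ge 0.6985$ is chosen precisely to guarantee $|S_i^2V_i|=o(|q_i|)$). The rest is routine manipulation of exponents, with the final numerical bound emerging from the quadratic inequality.
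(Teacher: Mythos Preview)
Your proof is correct and follows essentially the same route as the paper: invoke Proposition~\ref{search:prop:D2D3D6}(a), absorb $|S_i^2V_i|^2$ via Corollary~\ref{NVF:cor:oq}, then translate $|q_i|^2\ll|F_i|^3$ into height inequalities using Propositions~\ref{search:prop:qi}, \ref{F:prop:Fnotzero} and \ref{prelim:prop:height}. For the final numerical bound, the paper pairs $X_{i+1}\ll X_j^{2/(6\lambda-1)}$ directly with $X_j\ll X_{i+1}^{\lambda\beta/2}$ from Lemma~\ref{F:lemma:tech_alpha_beta}, whereas you pair $X_{i+1}\ll X_{j+1}^{(2-2\lambda)/(8\lambda-3)}$ with $X_{j+1}\ll X_{i+1}^{\beta}$; both routes collapse to the same quadratic $20\lambda^2-17\lambda+2\le 0$ and hence the same bound $\lambda\le(17+\sqrt{129})/40<0.709$.
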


\begin{proof}  The inequality $|q_i|^2 \ll |F_i|^3$ follows immediately from Proposition \ref{search:prop:D2D3D6} (a) combined with the preceding Corollary.  Using $|q_i| \asymp X_j/X_{i+1}$ (Proposition \ref{search:prop:qi}) and $|F_i| \ll X_j^2L_i^4$ (Proposition \ref{F:prop:Fnotzero}), it yields $X_{i+1}\ll X_j^{2/(6\lambda-1)}$.  Then, using $X_j\ll (X_{i+1}X_{j+1})^{1-\lambda}$ (Proposition \ref{prelim:prop:height}) to further eliminate $X_{i+1}$, we find $X_j^{2/(6\lambda-1)} \ll X_{j+1}^{(2-2\lambda)/(8\lambda-3)}$.  This proves the first assertion of the proposition.  Moreover, the combination of $X_{i+1}\ll X_j^{2/(6\lambda-1)}$ with $X_j \ll X_{i+1}^{\lambda(1-\lambda)/(3\lambda-2)}$ (Lemma \ref{F:lemma:tech_alpha_beta}) yields $(6\lambda-1)(3\lambda-2) \le 2\lambda(1-\lambda)$ if there are infinitely many such pairs $(i,j)$, and the second assertion follows.
\end{proof}

%
%

\section{Non-vanishing of $D^{(2)}$}
\label{sec:NVD2}

The main result of this section is that $D_i^{(2)}\neq 0$ for each sufficiently large $i\in I$ if $\lambda \ge 0.6985$.  We start by establishing algebraic consequences of a possible vanishing.

\begin{prop}
\label{NVD2:prop}
Suppose that $D^{(2)}_i=F_i^3+27S_i^4V_i^2=0$ for some $i\in I$.  Then, there exists a unique integer $R_i$ for which $F_i=-3R_i^2$ and $S_i^2V_i=R_i^3$.  This integer satisfies
\begin{equation}
\label{NVD2:prop:eq1}
 \gcd(q_i,R_i)=\gcd(q_i,S_i).
\end{equation}
Moreover, $q_i^6$ divides $cS_i^7(T_i-3R_i)^2$ for some integer constant $c>0$ not depending on $i$.
\end{prop}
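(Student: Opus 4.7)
The plan is to verify the three assertions in turn, using a unique-factorization argument for the first, a prime-by-prime valuation check for the second, and a combination of a specialization of $D^{(3)}$ with a Hensel-style lifting for the third.

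First I would show existence and uniqueness of $R_i$. The equation $F_i^3=-27(S_i^2V_i)^2$ forces $F_i\le 0$; setting $m=-F_i\ge 0$ yields $m^3=27(S_i^2V_i)^2$, and prime-by-prime comparison of $p$-adic valuations via unique factorization in $\bZ$ shows that $m=3r^2$ and $|S_i^2V_i|=r^3$ for a unique non-negative integer $r$ (at $p=3$ one uses that $3v_3(m)=3+2v_3(S_i^2V_i)$ forces $v_3(S_i^2V_i)$ to be a multiple of $3$ and $v_3(m)$ to be odd). Taking $R_i$ with the sign of $V_i$ so that $R_i^3=S_i^2V_i$ then yields both $F_i=-3R_i^2$ and $S_i^2V_i=R_i^3$; uniqueness is immediate from the injectivity of cubing on $\bZ$.

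Next, to prove \eqref{NVD2:prop:eq1} I would argue prime by prime. Fix a prime $p$ and set $\alpha=v_p(q_i)$, $\sigma=v_p(S_i)$, $\rho=v_p(R_i)$, $\nu=v_p(V_i)$. The relation $R_i^3=S_i^2V_i$ yields $3\rho=2\sigma+\nu$, and Proposition \ref{search:prop:qi}(c) gives $\min(\alpha,\sigma)=\min(\alpha,\nu)$. If $\sigma<\alpha$, the latter forces $\nu=\sigma$ (else $\nu\ge\alpha>\sigma$), and hence $\rho=\sigma$; if $\sigma\ge\alpha$, then $\nu\ge\alpha$ and $3\rho=2\sigma+\nu\ge 3\alpha$, so $\rho\ge\alpha$. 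In both cases $\min(\alpha,\rho)=\min(\alpha,\sigma)$, proving the identity.

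For the divisibility assertion, the key observation is that substituting $F_i=-3R_i^2$ and $S_i^2V_i=R_i^3$ into $D^{(3)}=F^3-18TF(S^2V)-135(S^2V)^2$ yields the remarkable simplification
\[
  D^{(3)}_i \;=\; 54\,R_i^5(T_i-3R_i).
\]
Since $D^{(3)}\in\cR_6\cap J^{(3)}$, Proposition \ref{search:prop:divJk} gives $q_i^3\mid 54R_i^5(T_i-3R_i)$, and squaring yields $q_i^6\mid 2916R_i^{10}(T_i-3R_i)^2$. To convert the $R_i^{10}$ into $S_i^7$ (up to the factor $(T_i-3R_i)^2$ and a universal constant), I would combine: (i) the cube relation $R_i^3=S_i^2V_i$, which rewrites $R_i^{10}=R_iS_i^6V_i^3$; (ii) the congruences $V_i\equiv p_i^3S_i$ and $T_i\equiv 3p_iS_i\pmod{q_i}$ from Proposition \ref{search:prop:qi}(b); and (iii) a Hensel-style lift deducing $R_i\equiv p_iS_i$ modulo the prime-to-$3$ part of $q_i$ from the congruence $R_i^3\equiv(p_iS_i)^3\pmod{q_i}$. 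The Hensel lift forces $q_i\mid T_i-3R_i$ via
\[
  T_i-3R_i \;=\; (T_i-3p_iS_i)+3(p_iS_i-R_i),
\]
providing an extra factor of $q_i$ that explains the pairing of $(T_i-3R_i)^2$ with $q_i^6$ rather than $q_i^4$. The gcd identity of step~2 then lets one replace the remaining $R_i$-factor by $S_i$ up to a bounded defect, delivering the claimed bound with a fixed positive integer $c$.

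The main obstacle will be the careful bookkeeping in this third step, especially at the prime $p=3$, where the derivative $3x^2$ of $x^3-(p_iS_i)^3$ vanishes modulo $3$ and Hensel's lemma does not apply directly. The 3-adic contribution, together with the various numerical constants arising from the polynomial identity and from the Hensel approximation, must all be isolated and absorbed into the fixed constant $c$; this is the delicate technical point of the proof.
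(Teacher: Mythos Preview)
Your treatment of the first two assertions is fine and matches the paper's argument (the paper condenses the gcd step into the single line $\gcd(q_i,R_i)^3=\gcd(q_i^3,S_i^2V_i)=\gcd(q_i,S_i)^3$, but your prime-by-prime version is equivalent).

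The third assertion is where your approach diverges, and there is a genuine gap. Your identity $D^{(3)}_i=54R_i^5(T_i-3R_i)$ is correct and elegant, and it immediately yields $q_i^6\mid 2916\,R_i^{10}(T_i-3R_i)^2$, hence $q_i^6\mid cS_i^{10}(T_i-3R_i)^2$ via the gcd identity. But this is the exponent $10$, not $7$, and you cannot close the gap by the Hensel argument you sketch. At a prime $\ell\nmid 6S_i$ your lift does go through (in fact one gets the much stronger $\ell^{3a}\mid T_i-3R_i$), but the obstruction is not the prime $3$: it is any prime $\ell$ dividing $\gcd(q_i,S_i)$. There $v_\ell(R_i)=v_\ell(S_i)=\sigma$ can be arbitrarily large while still $<v_\ell(q_i)$, and a direct valuation count shows that from $3a\le 5\rho+t$ alone one cannot deduce $6a\le 7\sigma+2t+\text{const}$ (take for instance $a=2\sigma$). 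So $D^{(3)}$ by itself carries strictly less information than needed.

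The paper instead exploits the two simpler invariants $A$ and $B$: under $F_i=-3R_i^2$, $S_i^2V_i=R_i^3$ one has the factorizations
\[
 4A_i=(T_i-3R_i)(T_i+3R_i),\qquad 4B_i=(T_i-3R_i)(T_i^2+3R_iT_i+36R_i^2),
\]
so that $q_i^6$ divides both $(T_i-3R_i)^3(T_i+3R_i)^3$ and $(T_i-3R_i)^2(T_i^2+3R_iT_i+36R_i^2)^2$. A resultant argument on the two cofactors $(x-3y)(x+3y)^3$ and $(x^2+3xy+36y^2)^2$ then bounds the part of $q_i^6$ not already absorbed by $(T_i-3R_i)^2$ in terms of a bounded power of $R_i$, which after the gcd identity becomes $S_i^7$. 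The point is that $A$ and $B$ give two \emph{independent} divisibility constraints sharing the factor $T_i-3R_i$; your single constraint from $D^{(3)}$ is a consequence of these but loses information in the combination. If you want to keep your $D^{(3)}$ identity, you should pair it with the factorization of $4A_i$ rather than with a Hensel lift.
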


\begin{proof}
We rewrite the hypothesis in the form $(-F_i/3)^3=(S_i^2V_i)^2$.  Since $S_i^2V_i\in\bZ$, the first assertion of the proposition follows.  Then, using the equality $\gcd(q_i,S_i)=\gcd(q_i,V_i)$ from Proposition \ref{search:prop:qi} (c), we deduce that
\[
 \gcd(q_i,R_i)^3 = \gcd(q_i^3,S_i^2V_i) = \gcd(q_i,S_i)^3
\]
which yields \eqref{NVD2:prop:eq1}.  We also find that
\[
 \begin{aligned}
 4A_i &= T_i^2+3F_i = (T_i-3R_i)(T_i+3R_i),\\
 4B_i &= T_i^3-9T_iF_i-108S_i^2V_i = (T_i-3R_i)(T_i^2+3R_iT_i+36R_i^2).
 \end{aligned}
\]
Since $q_i^2\mid A_i$ and $q_i^3\mid B_i$, this means that $q_i^6$ divides both
\[
 (T_i-3R_i)^3(T_i+3R_i)^3 \et (T_i-3R_i)^2(T_i^2+3R_iT_i+36R_i^2)^2
\]
and therefore $q_i^6$ divides $cR_i^7(T_i-3R_i)^2$ where $c$ denotes the resultant of the polynomials $(x-3y)(x+3y)^3$ and $(x^2+3xy+36y^2)^2$.  This integer $c$ is non-zero since these polynomials have no common zero in $\bP^1(\bC)$.  Using \eqref{NVD2:prop:eq1}, we conclude that $q_i^6$ divides
\[
 c\gcd(q_i^7,R_i^7)(T_i-3R_i)^2 = c\gcd(q_i^7,S_i^7)(T_i-3R_i)^2
\]
and so it divides $cS_i^7(T_i-3R_i)^2$.
\end{proof}

\begin{xrem}
By a similar method, one can show that
\[
 q_i \mid 3S_i(T_i-3R_i),
 \quad
 q_i^2 \mid 9S_i^2(T_i-3R_i)
 \et
 q_i^3 \mid 162S_i^4(T_i-3R_i),
\]
but these divisibility relations can also be derived, up to the value of the constant, from the one of the proposition.
\end{xrem}

\begin{cor}
\label{NVD2:prop:cor}
Suppose that $\lambda>(5-\sqrt{13})/2 \cong 0.697$.  Then, for all pairs of consecutive elements $i<j$ of $I$ with $F_i^3+27S_i^4V_i^2=0$ and $i$ large enough so that $F_i\neq 0$, we have
\[
 X_i^2X_j^4 \ll X_{i+1}^6 |S_i|^9
 \et
 |S_i^2S_j| \ll X_{i+1}^{-6\lambda}X_j^3.
\]
\end{cor}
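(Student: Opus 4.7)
The proof would rest on Proposition \ref{NVD2:prop}, which under $D_i^{(2)}=0$ produces an integer $R_i$ with $F_i=-3R_i^2$, $S_i^2V_i=R_i^3$, and the divisibility $q_i^6\mid cS_i^7(T_i-3R_i)^2$ for a fixed $c>0$. Both inequalities will then follow by converting these algebraic facts into size estimates via the tools of Sections \ref{sec:prelim} and \ref{sec:F}.

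The second inequality is the quick half. Since $V_i=S_j$ by construction, we have $|S_i^2S_j|=|R_i|^3$. Because $F_i\neq 0$ for $i$ large (Theorem \ref{NVF:thm}), Proposition \ref{F:prop:Fnotzero} gives $|F_i|\ll X_j^2L_i^4$, and together with $L_i\ll X_{i+1}^{-\lambda}$ this yields $|R_i|^2=|F_i|/3\ll X_j^2X_{i+1}^{-4\lambda}$. Raising to the $3/2$ power delivers $|S_i^2S_j|\ll X_j^3X_{i+1}^{-6\lambda}$.

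For the first inequality the subtlety is that the divisibility becomes vacuous when $T_i=3R_i$, so I would first rule out this case for large $i$. Indeed, $T_i=3R_i$ would force $|R_i|=|T_i|/3\gg (X_j/X_i)|S_i|$ via Proposition \ref{prelim:prop:est_Phi}, while on the other hand $|R_i|\ll X_jX_{i+1}^{-2\lambda}$ by the estimate just obtained. Combining these two bounds yields $|S_i|\ll X_iX_{i+1}^{-2\lambda}\le X_{i+1}^{1-2\lambda}$, which tends to $0$ since $\lambda>1/2$, contradicting $|S_i|\ge 1$ for $i$ large enough (note that $S_i$ is a nonzero integer by Theorem \ref{prelim:thm:phi}).

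Thus $T_i\neq 3R_i$ for large $i$, and the divisibility promotes to $|q_i|^6\le c|S_i|^7|T_i-3R_i|^2$. To finish, I would note that $|R_i|\ll X_jX_{i+1}^{-2\lambda}\le X_j/X_i$ (again using $\lambda>1/2$), while $|T_i|\gg X_j/X_i$ because $|S_i|\ge 1$; hence $|T_i-3R_i|\ll|T_i|\asymp(X_j/X_i)|S_i|$. Substituting this together with $|q_i|\asymp X_j/X_{i+1}$ (Proposition \ref{search:prop:qi}) into the divisibility gives $(X_j/X_{i+1})^6\ll|S_i|^9(X_j/X_i)^2$, which rearranges to $X_i^2X_j^4\ll X_{i+1}^6|S_i|^9$. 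The main obstacle here is the case analysis around $T_i=3R_i$; once that is excluded via the growth estimates, the rest is straightforward bookkeeping.
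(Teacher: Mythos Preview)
Your argument is correct and follows the same core strategy as the paper: invoke Proposition~\ref{NVD2:prop} to get the integer $R_i$ and the divisibility $q_i^6\mid cS_i^7(T_i-3R_i)^2$, then convert both facts into size estimates. The second inequality is handled identically.

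For the first inequality your execution diverges slightly. The paper dispatches both the non-vanishing of $T_i-3R_i$ and the estimate $|T_i-3R_i|\asymp|T_i|$ in one stroke by quoting Corollary~\ref{NVF:cor:oT}, which gives $|F_i|=o(|T_i|)$ and hence $|R_i|=o(|T_i|)$. You instead rule out $T_i=3R_i$ by a separate growth argument and then bound $|T_i-3R_i|\ll|T_i|$ via the cruder estimate $|R_i|\ll X_jX_{i+1}^{-2\lambda}\le X_j/X_i\ll|T_i|$. This works, but the case split you flag as ``the main obstacle'' is in fact unnecessary: Corollary~\ref{NVF:cor:oT} already gives $|R_i|=o(|T_i|)$, which simultaneously forces $T_i\neq 3R_i$ for large $i$ and yields the two-sided estimate $|T_i-3R_i|\asymp|T_i|$. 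Using it would shorten your proof to a couple of lines.
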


\begin{proof}
For these pairs $(i,j)$, the integer $R_i$ defined in the proposition satisfies $R_i^2\asymp |F_i|$.  Since Corollary  \ref{NVF:cor:oT} gives $|F_i|=o(|T_i|)=o(T_i^2)$, we deduce that $|T_i-3R_i| \asymp |T_i| \asymp (X_j/X_i) |S_i|$, where the last estimate comes from Proposition \ref{prelim:prop:est_Phi}.  In particular, if $i$ is large enough, the integer $cS_i^7(T_i-3R_i)^2$ is non-zero and, as it is divisible by $q_i^6$, we obtain
\[
 q_i^6 \le c|S_i|^7 |T_i-3R_i|^2 \asymp (X_j/X_i)^2 |S_i|^9.
\]
Since $|q_i| \asymp X_j/X_{i+1}$ (by Proposition \ref{search:prop:qi}), this yields the first estimate.  The second one follows directly from the upper bound $|F_i| \ll X_j^2L_i^4$ of Proposition \ref{F:prop:Fnotzero} together with $S_i^4S_j^2 = |F_i|^3/27$.
\end{proof}

\begin{theorem}
\label{NVD2:thm}
Suppose that $\lambda\ge 0.6985$.  Then, we have $F_i^3+27S_i^4V_i^2\neq 0$ for any large enough $i\in I$.
\end{theorem}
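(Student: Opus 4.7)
The plan is to argue by contradiction: suppose that $D^{(2)}_j = 0$ for infinitely many $j\in I$, and fix such a $j$ large enough for all preliminary results to apply. By Theorem~\ref{NVF:thm}, $F_j\neq 0$, so Proposition~\ref{NVD2:prop} furnishes a non-zero integer $R_j$ with $F_j = -3R_j^2$ and $S_j^2V_j = R_j^3$, together with the divisibility $q_j^6\mid cS_j^7(T_j-3R_j)^2$. Denoting by $k$ the successor of $j$ in $I$, Corollary~\ref{NVD2:prop:cor} then yields
\[
 X_j^2X_k^4 \ll X_{j+1}^6|S_j|^9
 \quad\text{and}\quad
 |S_j^2S_k| \ll X_{j+1}^{-6\lambda}X_k^3.
\]

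I would combine these with three further ingredients. First, Lemma~\ref{F:lemma:tech_alpha_beta} (applicable since $F_j\neq 0$) supplies the growth sandwich $X_{j+1}^{2\lambda}\ll X_k\ll X_{k+1}^\alpha$ and $X_{k+1}\ll X_{j+1}^\beta$, with $\alpha=2\lambda(1-\lambda)/(3\lambda-1)$ and $\beta=2(1-\lambda)/(3\lambda-2)$. Second, the standard upper bound $|S_j|\ll X_j^2X_{j+1}^{-\lambda}$ from Corollary~\ref{prelim:cor:est_varphi}, when inserted into the first displayed estimate and combined with the sandwich, yields the polynomial constraint $83\lambda^2-67\lambda+6\le 0$, i.e.\ $\lambda\le (67+\sqrt{2497})/166\approx 0.7047$. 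This alone falls short of the desired threshold $0.6985$. Third, and crucially, Corollary~\ref{NVF:cor:oq} provides the sharp refinement $|S_j^2V_j| = o(|q_j|)$; the proof of that corollary in fact gives $|S_j^2V_j|/|q_j|\ll X_{j+1}^\tau$ where $\tau=-(\lambda^3+13\lambda^2-11\lambda+1)/((3\lambda-1)(1-\lambda))$, so $\tau<0$ for $\lambda\ge 0.6985$ precisely because $0.6985$ is the largest real root of the cubic $\lambda^3+13\lambda^2-11\lambda+1$.

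The central step is then to inject this little-$o$ refinement into the analysis. Using $|V_j|\ge 1$, I get $|S_j|^2\le |S_j^2V_j| = o(|q_j|)$ and hence $|S_j|^9 = o(|q_j|^{9/2})$. Substituting into $X_j^2X_k^4\ll X_{j+1}^6|S_j|^9$ and using $|q_j|\asymp X_k/X_{j+1}$ produces the refined relation $X_j^4 = o(X_k X_{j+1}^3)$, which, when combined with the second estimate of Corollary~\ref{NVD2:prop:cor} (providing an upper bound on $|S_j|$ by $X_{j+1}^{-3\lambda}X_k^{3/2}$ up to $|S_k|\ge 1$) and with the sandwich of Lemma~\ref{F:lemma:tech_alpha_beta}, tightens the polynomial constraint on $\lambda$ in a way whose governing sign coincides with that of the cubic $\lambda^3+13\lambda^2-11\lambda+1$, thereby forcing $\lambda<0.6985$ and yielding the required contradiction. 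The main obstacle will be to carry the $o(1)$ factor correctly through the chain of substitutions so as to recover exactly the threshold $\lambda = 0.6985$ set by the largest root of the cubic, rather than a strictly weaker numerical bound such as the preliminary $0.7047$ or the intermediate $\approx 0.7002$ one obtains by combining instead with Proposition~\ref{NVF:prop:D2nonzero} applied to the pair $(i,j)$ where $i$ is the predecessor of $j$ in $I$.
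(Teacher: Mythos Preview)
Your proposal has a genuine gap at its central step. You derive $X_j^4 = o(X_k X_{j+1}^3)$ by injecting $|S_j|^2 \le |S_j^2V_j| = o(|q_j|)$ into the lower bound $|S_j|^9 \gg X_j^2X_k^4X_{j+1}^{-6}$ from Corollary~\ref{NVD2:prop:cor}, and then assert that this, together with the second estimate of that corollary and the sandwich of Lemma~\ref{F:lemma:tech_alpha_beta}, forces $\lambda<0.6985$. But the relation $X_j^4 \ll X_kX_{j+1}^3$ is \emph{not} in conflict with the available growth bounds: for instance, combining it with $X_j^2\gg X_{j+1}^\lambda$ (from $|S_j|\ge 1$) and $X_k\ll X_{j+1}^{\lambda(1-\lambda)/(3\lambda-2)}$ only yields $7\lambda^2-14\lambda+6\le 0$, which holds throughout the admissible range. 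More fundamentally, your hope that the final constraint should reproduce \emph{exactly} the threshold where the cubic $\lambda^3+13\lambda^2-11\lambda+1$ vanishes is self-defeating: that cubic governs the exponent $\tau$ in Corollary~\ref{NVF:cor:oq}, and at $\tau=0$ the little-$o$ degenerates to a mere $\ll$, so the injection contributes nothing new and cannot by itself produce a contradiction. The fact that you identify this as ``the main obstacle'' is a sign that the argument is incomplete rather than merely in need of bookkeeping.

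What you are missing is the case distinction that drives the paper's proof. The paper does not treat all $j\in I_2:=\{i\in I: D_i^{(2)}=0\}$ uniformly. Instead it first considers the case where $I\setminus I_2$ is infinite: then there are arbitrarily large consecutive $i<j$ in $I$ with $D_i^{(2)}\neq 0$ and $D_j^{(2)}=0$, and one may invoke Proposition~\ref{NVF:prop:D2nonzero} for the predecessor pair $(i,j)$ to obtain the strong bound $X_j\ll X_{j+1}^{(6\lambda-1)(1-\lambda)/(8\lambda-3)}$. Combining this with the two estimates of Corollary~\ref{NVD2:prop:cor} (eliminating $|S_j|$, $|S_k|$ and $X_k$) gives $(43\lambda-18)(8\lambda-3)\le 32(6\lambda-1)(1-\lambda)$, which fails for $\lambda\ge 0.6985$. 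If instead $I\setminus I_2$ is finite, the paper argues separately according to whether $(|S_i|)_{i\in I}$ is bounded, in each sub-case reaching a contradiction by a different elimination. You never exploit the hypothesis $D_i^{(2)}\neq 0$ for the predecessor, which is precisely what delivers the sharp threshold.
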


\begin{proof} Suppose on the contrary that the set $I_2:=\{i\in I\,;\, F_i^3+27S_i^4V_i^2=0\}$ is infinite.  As a first step, suppose also that $I\setminus I_2$ is infinite.  Then, there exists infinitely many triples of consecutive elements $i<j<k$ of $I$ with $i\in I\setminus I_2$ and $j\in I_2$.  Choosing them large enough, we may further assume that $F_j\neq 0$.  Since $F_j^3+27S_j^4V_j^2=0$, we have $S_j\neq 0$, $S_k=V_j\neq 0$ and the previous corollary gives
\begin{equation}
\label{NVD2:thm:eq1}
 X_j^2X_k^4 \ll X_{j+1}^6 |S_j|^9
 \et
 |S_j^2S_k| \ll X_{j+1}^{-6\lambda}X_k^3,
\end{equation}
while Corollary \ref{prelim:cor:est_varphi} gives
\begin{equation}
\label{NVD2:thm:eq2}
 |S_j| \ll X_j^2X_{j+1}^{-\lambda}.
\end{equation}
Eliminating $|S_j|$ between \eqref{NVD2:thm:eq1} and \eqref{NVD2:thm:eq2}, and using the lower bound $|S_k|\ge 1$ to further eliminate $|S_k|$ from the resulting estimates, we obtain
\[
 X_{j+1}^{9\lambda-6}X_k^4 \ll X_j^{16}
 \et
 X_j^4 X_{j+1}^{54\lambda-12} \ll X_k^{19}.
\]
Then, eliminating $X_k$, we find $X_{j+1}^{43\lambda-18} \ll X_j^{32}$. On the other hand, since $F_i^3+27S_i^4V_i^2\neq 0$, Proposition \ref{NVF:prop:D2nonzero} gives $X_j \ll X_{j+1}^{(6\lambda-1)(1-\lambda)/(8\lambda-3)}$.  Combining the latter two estimates and noting that they hold for infinitely many $j$, we conclude that $(43\lambda-18)(8\lambda-3) \le 32(6\lambda-1)(1-\lambda)$, in contradiction with our hypothesis that $\lambda\ge 0.6985$.

Thus $I\setminus I_2$ is a finite set and so, there exists an integer $i_0$ such that $F_i\neq 0$ and $F_i^3+27S_i^4V_i^2 = 0$ for each $i\in I$ with $i\ge i_0$.  Suppose now that the sequence $(|S_i|)_{i\in I}$ is unbounded. Then, there exist infinitely many triples of consecutive elements $i<j<k$ of $I$ with $i\ge i_0$ and $|S_j|\le |S_k|$.  For these triples, the inequalities \eqref{NVD2:thm:eq1} and \eqref{NVD2:thm:eq2} are again satisfied.  We use the hypothesis $|S_j|\le |S_k|$ to eliminate $|S_k|$ from the second inequality of \eqref{NVD2:thm:eq1} and then eliminate $|S_j|$ from the resulting three inequalities.  This yields
\[
 X_{j+1}^{9\lambda-6}X_k^4 \ll X_j^{16}
 \et
 X_j^2X_{j+1}^{18\lambda-6} \ll X_k^5.
\]
Then, eliminating $X_k$, we find $X_{j+1}^{13\lambda-6} \ll X_j^{8}$.  Since Lemma \ref{F:lemma:tech_alpha_beta} also gives $X_j\ll X_{j+1}^{\alpha}$ and since $j$ can be taken arbitrarily large, we conclude that $13\lambda-6 \le 8\alpha$ which again contradicts our hypothesis on $\lambda$.    This means that the sequence $(|S_i|)_{i\in I}$ is bounded and so, the first inequality of \eqref{NVD2:thm:eq1} yields $X_j^2X_k^4 \ll X_{j+1}^6$ for infinitely many pairs of consecutive elements $i<j$ of $I$.  Then using the estimates $X_{j+1}^{\lambda/2}\ll X_j$ and $X_{j+1}^{2\lambda} \ll X_k$ coming from Lemma \ref{F:lemma:tech_alpha_beta}, we conclude that $X_{j+1}^{9\lambda} \ll X_{j+1}^6$ for the same pairs $(i,j)$ and so $\lambda\le 2/3$, a contradiction.
\end{proof}

\begin{cor}
\label{NVD2:thm:cor}
Suppose that $\lambda\ge 0.7034$.  Then, $|T_iF_iS_i^2V_i|=o(|q_i|^3)$ for $i\in I$.
\end{cor}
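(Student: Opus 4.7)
The plan is to bound $|T_iF_iS_i^2V_i/q_i^3|$ by a negative power of $X_{i+1}$, combining the basic analytic estimates from Section \ref{sec:prelim} with Proposition \ref{NVF:prop:D2nonzero}, which is available for all large enough $i\in I$ thanks to Theorems \ref{NVF:thm} and \ref{NVD2:thm} (granting $F_i, D^{(2)}_i\neq 0$, and similarly at the predecessor $h$ of $i$ in $I$). The strategy is the same as in the proofs of Corollaries \ref{NVF:cor:oT} and \ref{NVF:cor:oq}: feed the standard bounds into the ratio and then use the growth relations between $X_i$, $X_{i+1}$, $X_j$, $X_{j+1}$ to reduce to a single power of $X_{i+1}$ whose exponent is shown to be negative by an elementary polynomial inequality.

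First I would substitute $|T_i|\ll X_iX_jX_{i+1}^{-\lambda}$, $|F_i|\ll X_j^2X_{i+1}^{-4\lambda}$, $|S_i|^2\ll X_i^4X_{i+1}^{-2\lambda}$, $|V_i|\ll X_j^2X_{j+1}^{-\lambda}$ and $|q_i|^3\asymp X_j^3X_{i+1}^{-3}$ to obtain
\[
  \left|\frac{T_iF_iS_i^2V_i}{q_i^3}\right|\ll X_i^5 X_j^2 X_{i+1}^{3-7\lambda} X_{j+1}^{-\lambda}.
\]
Applying the height bound $X_j\ll (X_{i+1}X_{j+1})^{1-\lambda}$ from Proposition \ref{prelim:prop:height} collapses this to $\ll X_i^5 X_{i+1}^{5-9\lambda} X_{j+1}^{2-3\lambda}$. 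The new difficulty compared with the earlier corollaries is that for $\lambda\ge 0.7034$ the exponent $2-3\lambda$ is \emph{negative}, which forces us to use a lower bound on $X_{j+1}$ rather than an upper one.

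The decisive step is to invoke Proposition \ref{NVF:prop:D2nonzero} \emph{twice}. Applied to the pair $(i,j)$ it yields the sharp lower bound $X_{j+1}\gg X_{i+1}^{(8\lambda-3)/(2-2\lambda)}$, so that $X_{j+1}^{2-3\lambda}\ll X_{i+1}^{(2-3\lambda)(8\lambda-3)/(2-2\lambda)}$. Applied to the predecessor pair $(h,i)$ it gives $X_{h+1}\ll X_i^{2/(6\lambda-1)}$, which together with $X_i\ll (X_{h+1}X_{i+1})^{1-\lambda}$ produces the sharpened estimate
\[
  X_i \ll X_{i+1}^{\alpha'},\qquad \alpha'=\frac{(1-\lambda)(6\lambda-1)}{8\lambda-3},
\]
which is strictly smaller than the usual $X_i\ll X_{i+1}^\alpha$ of Lemma \ref{F:lemma:tech_alpha_beta}. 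Inserting both improvements gives $|T_iF_iS_i^2V_i/q_i^3|\ll X_{i+1}^{\tau}$ with
\[
 \tau = 5\alpha' + (5-9\lambda) + \frac{(2-3\lambda)(8\lambda-3)}{2-2\lambda}.
\]

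It remains to check $\tau<0$. Multiplying by $2(1-\lambda)(8\lambda-3)>0$ and expanding, this reduces to the inequality $12\lambda^3-136\lambda^2+121\lambda-22<0$; the cubic is decreasing throughout the relevant interval $[0.7034,\,5/7]$ (its derivative has its smaller root near $\lambda=0.475$), and direct numerical evaluation shows it is already negative at $\lambda=0.7034$, the critical root being approximately $0.70338$. The main obstacle I anticipate is recognizing that one must apply the $D^{(2)}$-sharpened bounds on \emph{both} sides of $X_{i+1}$: using only the lower bound on $X_{j+1}$ or only the upper bound on $X_i$ (or neither) gives a positive exponent and falls just short of the threshold $0.7034$; the calibration of the hypothesis in the statement is precisely tuned to the point where the combined cubic changes sign.
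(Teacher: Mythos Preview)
Your proof is correct and follows essentially the same route as the paper's: both start from the estimate $|T_iF_iS_i^2V_i|/|q_i|^3 \ll X_i^5 X_{i+1}^{3-7\lambda} X_j^2 X_{j+1}^{-\lambda}$, eliminate $X_j$ via Proposition~\ref{prelim:prop:height}, and then invoke Proposition~\ref{NVF:prop:D2nonzero} at both $(h,i)$ and $(i,j)$ (available by Theorems~\ref{NVF:thm} and \ref{NVD2:thm}) to bound $X_{h+1}$ from above and $X_{j+1}$ from below. The paper keeps $X_{h+1}$ explicit and bounds it by $X_{i+1}^{(2-2\lambda)/(8\lambda-3)}$ directly, whereas you first fold this into a refined exponent $X_i\ll X_{i+1}^{\alpha'}$; a short computation shows your $\tau$ and the paper's $\tau$ coincide, and your cubic $12\lambda^3-136\lambda^2+121\lambda-22$ is exactly $2(1-\lambda)(8\lambda-3)\tau$.
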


\begin{proof}
Let $h<i<j$ be consecutive elements of $I$. Applying the usual estimates for $|q_i|$, $|T_i|$, $|F_i|$, $|S_i|$ and $|V_i|=|S_j|$, as in the proofs of Corollaries \ref{NVF:cor:oT} and \ref{NVF:cor:oq}, we find
\[
 |q_i|^{-3}|T_iF_iS_i^2V_i|
  \asymp X_i^{-1}X_{i+1}^3X_j^{-2} |F_iS_i^3V_i|
   \ll X_i^5 X_{i+1}^{3-7\lambda} X_j^2 X_{j+1}^{-\lambda}.
\]
Using the upper bounds $X_i\ll (X_{h+1}X_{i+1})^{1-\lambda}$ and $X_j \ll (X_{i+1}X_{j+1})^{1-\lambda}$ of Proposition \ref{prelim:prop:height} to eliminate $X_i$ and $X_j$, we obtain \begin{equation*}
 |q_i|^{-3}|T_iF_iS_i^2V_i|
   \ll X_{h+1}^{5-5\lambda} X_{i+1}^{10-14\lambda} X_{j+1}^{2-3\lambda}.
\end{equation*}
By Proposition \ref{NVF:prop:D2nonzero} and Theorem \ref{NVD2:thm}, we also have
\[
 X_{h+1} \ll X_{i+1}^{(2-2\lambda)/(8\lambda-3)}
 \et
 X_{i+1}^{(8\lambda-3)/(2-2\lambda)} \ll X_{j+1}.
\]
As $5-5\lambda>0>2-3\lambda$, we conclude that $|q_i|^{-3}|T_iF_iS_i^2V_i| \ll X_{i+1}^\tau$ with
\[
 \tau = (5-5\lambda)\frac{2-2\lambda}{8\lambda-3} + (10-14\lambda) + (2-3\lambda)\frac{8\lambda-3}{2-2\lambda}
      < 0.
\qedhere
\]
\end{proof}

We conclude this section with the following consequence of the above corollary.

\begin{prop}
\label{NVD2:prop:D3D6}
Suppose that $\lambda\ge 0.7034$.  Then, for each $i\in I$ large enough so that $T_i\neq 0$ and $F_i\neq 0$, we have
\[
 \begin{aligned}
  |q_i| &\ll |F_i| &&\text{if $D^{(3)}_i\neq 0$,}\\
  |q_i|^6 &\ll |T_iF_i^4| &&\text{if $D^{(3)}_i\neq 0$ and $D^{(6)}_i\neq 0$.}
 \end{aligned}
\]
\end{prop}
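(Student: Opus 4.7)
The proposition should follow rather directly by combining Proposition \ref{search:prop:D2D3D6} with Corollary \ref{NVD2:thm:cor}, so the plan is mostly bookkeeping.

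For the first assertion, the plan is to start from Proposition \ref{search:prop:D2D3D6}(b), which under the stated non-vanishing hypothesis gives
\[
 |q_i|^3 \ll |F_i|^3 + |T_i F_i S_i^2 V_i|.
\]
Corollary \ref{NVD2:thm:cor} tells us that $|T_iF_iS_i^2V_i|=o(|q_i|^3)$ as $i\to\infty$ in $I$. So for $i$ large enough, the second summand on the right is at most, say, $|q_i|^3/2$, and can be absorbed into the left-hand side to yield $|q_i|^3 \ll |F_i|^3$. Taking cube roots gives $|q_i|\ll |F_i|$.

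For the second assertion, the plan is to apply Proposition \ref{search:prop:D2D3D6}(c), which gives
\[
 |q_i|^6 \ll |T_iF_i^4| + |T_i^3(S_i^2V_i)^2|,
\]
and then to bound the last summand by $o(|T_iF_i^4|)$ using the first assertion together with Corollary \ref{NVD2:thm:cor}. Explicitly, from the first assertion we have $|q_i|^3 \ll |F_i|^3$, so Corollary \ref{NVD2:thm:cor} can be rephrased as $|T_iF_iS_i^2V_i|=o(|F_i|^3)$. Since $F_i$ is a non-zero integer, $|F_i|\ge 1$, and we may divide by $|F_i|$ to obtain $|T_iS_i^2V_i|=o(|F_i|^2)$. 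Squaring and multiplying by $|T_i|\ge 1$ (which is legitimate for $i$ large, since $T_i\neq 0$), this yields
\[
 |T_i^3(S_i^2V_i)^2| \;=\; |T_i|\cdot|T_iS_i^2V_i|^2 \;=\; o\bigl(|T_iF_i^4|\bigr).
\]
Inserting this in the estimate above and absorbing the small term into the left-hand side produces $|q_i|^6\ll |T_iF_i^4|$, as desired.

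The only place where one needs to be a little careful is in the step where $|F_i|$ is pulled out (and likewise $|T_i|$) as an integer factor; since both are non-zero integers for $i$ large, both are $\ge 1$, so these manipulations are valid. No new Diophantine input is required beyond Corollary \ref{NVD2:thm:cor} and Proposition \ref{search:prop:D2D3D6}, so there is no serious obstacle; the work is essentially a bootstrap from (b) to (c) through the sharpened estimate delivered by the first assertion.
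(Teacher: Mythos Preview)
Your proof is correct and follows essentially the same route as the paper: use Proposition~\ref{search:prop:D2D3D6}(b) together with Corollary~\ref{NVD2:thm:cor} to absorb the $|T_iF_iS_i^2V_i|$ term and deduce $|q_i|\ll|F_i|$, then bootstrap this into $|T_iS_i^2V_i|=o(|F_i|^2)$ and hence $|T_i^3(S_i^2V_i)^2|=o(|T_iF_i^4|)$, which combined with Proposition~\ref{search:prop:D2D3D6}(c) gives the second bound. A minor remark: the conditions $|F_i|\ge 1$ and $|T_i|\ge 1$ are not actually needed to justify dividing or multiplying an $o$-relation by a common nonzero factor, though invoking them does no harm.
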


\begin{proof}
Suppose that $D_i^{(3)}\neq 0$.  Then, the estimate $|T_iF_iS_i^2V_i|=o(|q_i|^3)$ of the previous corollary combined with Proposition \ref{search:prop:D2D3D6} (b) yields $|q_i| \ll |F_i|$.  From this we deduce that $|T_iF_iS_i^2V_i| = o(|F_i|^3)$, so $|T_iS_i^2V_i|=o(|F_i|^2)$ and thus $|T_i^3(S_i^2V_i)^2| = o(|T_iF_i^4|)$. Combining the latter estimate with Proposition \ref{search:prop:D2D3D6} (c) yields $|q_i|^6 \ll |T_iF_i^4|$ if $D^{(6)}_i\neq 0$.
\end{proof}

%
%

\section{Non-vanishing of $D^{(3)}$ and $D^{(6)}$}
\label{sec:NVD3D6}

We first prove non-vanishing results for $D^{(3)}$ and $D^{(6)}$, and then prove Theorem \ref{intro:thm:main} as a consequence of these and of the above Proposition \ref{NVD2:prop:D3D6}.

\begin{prop}
\label{NVD3D6:propD3}
Suppose that $\lambda>(5-\sqrt{13})/2 \cong 0.697$.  Then $D_i^{(3)}\neq 0$ and $D_i^{(6)}\neq 0$ for each sufficiently large $i\in I$.
\end{prop}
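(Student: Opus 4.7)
The strategy is to combine the algebraic input of Proposition \ref{search:prop:remark} with the analytic one of Theorem \ref{NVF:thm}.  Since $F$, $D^{(2)}$, $D^{(3)}$, $D^{(6)}$ are pairwise non-associate irreducible elements of $\cR=\bQ[T,F,S^2V]$, Proposition \ref{search:prop:remark} (applicable here since $\lambda>(5-\sqrt{13})/2>2/3$) together with the observation immediately preceding it ensures that, for each sufficiently large $i\in I$, at most one of the four integers $F_i$, $D^{(2)}_i$, $D^{(3)}_i$, $D^{(6)}_i$ is zero.  Theorem \ref{NVF:thm} gives $F_i\neq 0$ for large $i$, reducing the problem to showing that $D^{(3)}_i=0$ and $D^{(6)}_i=0$ each hold for at most finitely many large $i\in I$.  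A key preliminary is that $|F_i|=o(|T_i|)$ for large $i\in I$: indeed, Corollary \ref{NVF:cor:oT} gives $|F_iS_i^2V_i|=o(|T_i|)$, and $|S_i^2V_i|\ge 1$ holds for large $i$ because $S_i$ and $V_i=S_{i'}$ (where $i'$ denotes the successor of $i$ in $I$) are nonzero integers by Theorem \ref{prelim:thm:phi}.

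To handle $D^{(3)}$, I would view the relation $D^{(3)}_i=0$ as saying that the integer $Y=S_i^2V_i$ is a root of the quadratic $135\,Y^2+18T_iF_i\,Y-F_i^3=0$.  Its discriminant $36F_i^2(9T_i^2+15F_i)$ must therefore be a perfect square in $\bZ$, which forces $9T_i^2+15F_i=R_i^2$ for some $R_i\in\bN$.  Factoring $(R_i-3|T_i|)(R_i+3|T_i|)=15F_i$ and using $R_i+3|T_i|\gg|T_i|$ together with $|F_i|=o(|T_i|)$, the integer $R_i-3|T_i|$ has magnitude $o(1)$ and hence vanishes for $i$ sufficiently large; this yields $F_i=0$, contradicting Theorem \ref{NVF:thm}.

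To handle $D^{(6)}$, note that if $D^{(6)}_i=0$ then $D^{(3)}_i\neq 0$ by the previous step, and the identity $M^3-N^2=27\,S^2V\cdot D^{(6)}$ from Example \ref{search:example:D6} yields $M_i^3=N_i^2\neq 0$.  By unique factorization in $\bZ$ there is a positive integer $K_i$ with $M_i=K_i^2$ and $N_i=\pm K_i^3$.  Since $M\in J^{(2)}$ forces $q_i^2\mid M_i=K_i^2$, we have $q_i\mid K_i$ and hence $|M_i|\ge q_i^2$.  Combining this lower bound with the upper estimate $|M_i|\ll|F_i|^2+|T_iS_i^2V_i|$ and with the factorization $(F_i-K_i)(F_i+K_i)=3T_iS_i^2V_i$, I would exploit the growth estimates from Sections \ref{sec:prelim} and \ref{sec:F} (notably $X_j\gg X_{i+1}^{2\lambda}$ from Proposition \ref{F:prop:Fnotzero} and the inequalities of Lemma \ref{F:lemma:tech_alpha_beta}) to reach a contradiction.

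The main obstacle will be the $D^{(6)}$ case.  Unlike the $D^{(3)}$ analysis, where the single estimate $|F_i|=o(|T_i|)$ decisively collapses an integer gap, here both $|F_i|^2$ and $|T_iS_i^2V_i|$ can compete as the dominant contribution to $|M_i|$, and no single coarse estimate suffices.  The perfect-square structure of $M_i$ together with the divisibility $q_i\mid K_i$ must be balanced carefully against the full set of size estimates from Section \ref{sec:F}, so that the contradiction holds precisely at the threshold $\lambda>(5-\sqrt{13})/2$ that is also the hypothesis of Theorem \ref{NVF:thm}.
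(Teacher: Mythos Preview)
Your argument for $D^{(3)}$ is correct and rather elegant.  Viewing $D^{(3)}_i=0$ as a quadratic in $S_i^2V_i$ and forcing $9T_i^2+15F_i$ to be a perfect square, then collapsing the integer $R_i-3|T_i|$ via $|F_i|=o(|T_i|)$, is a clean route that the paper does not take.  The paper instead rewrites $D^{(3)}_i=0$ as $F_i(F_i^2-18T_iS_i^2V_i)=135S_i^4V_i^2$, uses $F_i\mid 135S_i^4V_i^2$ to get $|F_i|\ll S_i^4V_i^2$, and then shows that the term $-18T_iF_iS_i^2V_i$ dominates the other two in $D^{(3)}_i$, forcing it to be nonzero.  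Both arguments rest on the same analytic input $|F_iS_i^2V_i|=o(|T_i|)$ from Corollary~\ref{NVF:cor:oT}; yours exploits integrality via a discriminant, theirs via a divisor bound.

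The $D^{(6)}$ part, however, is a genuine gap.  You correctly extract $M_i=K_i^2$, $N_i=\pm K_i^3$ from $M_i^3=N_i^2$, and the observation $q_i\mid K_i$ (hence $|M_i|\ge q_i^2$) is valid, but it buys you nothing beyond what $M\in J^{(2)}$ and $M_i\neq 0$ already give directly.  More importantly, the inequality $q_i^2\ll |F_i|^2+|T_iS_i^2V_i|$ is too weak: if you chase the case $|q_i|\ll |F_i|$ through the estimates of Lemma~\ref{F:lemma:tech_alpha_beta}, you only reach $13\lambda^2-12\lambda+2\le 0$, i.e.\ $\lambda\le (6+\sqrt{10})/13\approx 0.705$, which does not contradict $\lambda>(5-\sqrt{13})/2\approx 0.697$.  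The other case is no better.  The paper's argument is substantially different: from $M_i=R_i^2$, $N_i=R_i^3$ it derives the pair of relations $F_i^2-R_i^2=3T_iS_i^2V_i$ and $F_i^3-R_i^3=18T_iF_iS_i^2V_i+135S_i^4V_i^2$, shows that $R_i/F_i$ accumulates only at irrational points (hence $|F_i-R_i|\asymp|F_i|$), and then uses the divisibility $(F_i-R_i)\mid 135S_i^4V_i^2$ to force $|F_i|\ll S_i^4V_i^2$.  This, combined with $F_i^2\asymp|T_iS_i^2V_i|$, yields the two inequalities $|T_i|\ll|S_i|^6|V_i|^3$ and $|T_i|^{4/3}\ll F_i^2$, which together with the growth estimates finally produce the contradiction at the stated threshold.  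Your sketch does not reach the crucial divisor step $(F_i-R_i)\mid 135S_i^4V_i^2$, and without it the argument cannot close.
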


\begin{proof}
Suppose that $D_i^{(3)}=0$ for some $i\in I$ large enough so that $S_i\neq 0$ and $V_i\neq 0$.  Then, we have $F_i(F_i^2-18T_iS_i^2V_i)=135S_i^4V_i^2$, so $F_i$ is a non-zero divisor of $135S_i^4V_i^2$ and therefore $1\le |F_i| \ll S_i^4V_i^2$.  On the other hand, Corollary \ref{NVF:cor:oT} shows that $|F_iS_i^2V_i|=o(|T_i|)$.  This yields
\[
 \max\{|F_i|^3,\,|S_i^4V_i^2|\}  \ll |F_i^2S_i^4V_i^2| = o(|T_iF_iS_i^2V_i|)
\]
and therefore $0 = D_i^{(3)} = -(18+o(1))T_iF_iS_i^2V_i$.  Thus $i$ must be bounded from above.  This proves the non-vanishing assertion for $D^{(3)}$.

For $D^{(6)}$, we proceed by contradiction assuming, on the contrary, that $I_6:=\{i\in I\,;\, D_i^{(6)}=0\}$ is an infinite set.  For each large enough $i\in I_6$, the integers $F_i$, $S_i$, $T_i$ and $V_i$ are all non-zero and the formula \eqref{search:eq:D6bis} of Example \ref{search:example:D6} yields $0=M_i^3-N_i^2$ where
\[
 M_i=F_i^2-3T_iS_i^2V_i
 \et
 N_i=D_i^{(3)}=F_i^3-18T_iF_iS_i^2V_i-135S_i^4V_i^2.
\]
Then, we can write $M_i=R_i^2$ and $N_i=R_i^3$ for a unique integer $R_i$ and the above formulas become
\begin{equation}
 \label{NVD3D6:propD6:eq1}
 \begin{aligned}
 F_i^2-R_i^2 &=3T_iS_i^2V_i, \\
 F_i^3-R_i^3 &=18T_iF_iS_i^2V_i+135S_i^4V_i^2.
 \end{aligned}
\end{equation}
In particular, we have $F_i-R_i\neq 0$.  Since Corollary \ref{NVF:cor:oT} shows that $S_i^2V_i=o(|T_i/F_i|)=o(|T_iF_i|)$, we deduce that
\[
 F_i^3-R_i^3 = (18+o(1))T_iF_iS_i^2V_i = (6+o(1))F_i(F_i^2-R_i^2)
\]
which, after division by $F_i^2(F_i-R_i)$, yields
\[
 (R_i/F_i)^2-(5+o(1))(R_i/F_i)-(5+o(1))=0.
\]
Thus the sequence of ratios $(R_i/F_i)_{i\in I_6}$ has at most two irrational accumulation points, and so $|R_i|\asymp |F_i| \asymp |F_i-R_i| \asymp |F_i+R_i|$ for each large $i\in I_6$.  The first equality in \eqref{NVD3D6:propD6:eq1} then gives
\begin{equation}
 \label{NVD3D6:propD6:eq2}
  F_i^2 \asymp |T_iS_i^2V_i|.
\end{equation}
The equalities \eqref{NVD3D6:propD6:eq1} also imply that $F_i-R_i$ divides $135S_i^4V_i^2$ and therefore
\[
  |F_i| \asymp |F_i-R_i| \le 135 |S_i^4V_i^2|.
\]
Substituting this upper bound for $|F_i|$ into \eqref{NVD3D6:propD6:eq2}, we obtain
\[
  |T_i| \ll |S_i^6V_i^3|.
\]
Viewing this as a lower bound for $|S_i^2V_i|$, we also deduce from \eqref{NVD3D6:propD6:eq2} that
\[
  |T_i|^{4/3} \ll F_i^2.
\]
Let $j$ denote the successor of $i$ in $I$.  Applying the usual estimates for $|F_i|$, $|S_i|$, $|T_i|$ and $|V_i|=|S_j|$, as in the proof of Corollary \ref{NVF:cor:oT}, the last two inequalities yield
\begin{align*}
 1 &\ll |T_i|^{-1}|S_i^6V_i^3|
   \asymp \frac{X_i}{X_j}|S_i|^5|V_i|^3
   \ll \frac{X_i}{X_j}(X_i^2L_i)^5(X_j^2L_j)^3
   = X_i^{11} X_j^5 L_i^5 L_j^3, \\
 1 &\ll |T_i|^{-1}|F_i|^{3/2}
   \asymp \frac{X_i}{X_j |S_i|} |F_i|^{3/2}
  \ll \frac{X_i}{X_j}(X_j^2L_i^4)^{3/2}
  = X_i X_j^2 L_i^6.
\end{align*}
Then, using $L_i\ll X_{i+1}^{-\lambda}$, $L_j\ll X_{j+1}^{-\lambda}$ and $X_j\ll (X_{i+1}X_{j+1})^{1-\lambda}$ to eliminate $L_i$, $L_j$ and $X_j$ from the above upper bounds, we obtain
\[
 1 \ll X_i^{11} X_{i+1}^{5-10\lambda} X_{j+1}^{5-8\lambda}
 \et
 1 \ll X_i X_{i+1}^{2-8\lambda} X_{j+1}^{2-2\lambda}.
\]
For $i$ large enough, we also have $F_h\neq 0$ where $h$ denotes the predecessor of $i$ of $I$, and so Lemma \ref{F:lemma:tech_alpha_beta} gives $X_i \ll X_{i+1}^\alpha$ where $\alpha = 2\lambda(1-\lambda)/(3\lambda-1)$.  Applying this to eliminate $X_i$ from the preceding estimates, we obtain
\[
 X_{j+1}^{8\lambda-5} \ll X_{i+1}^{11\alpha+5-10\lambda}
 \et
 X_{i+1}^{8\lambda-2-\alpha} \ll X_{j+1}^{2-2\lambda}
\]
and so $(8\lambda-2-\alpha)(8\lambda-5) \le (11\alpha+5-10\lambda)(2-2\lambda)$.  This is the required contradiction.
\end{proof}

In view of the above result, we may rewrite Proposition \ref{NVD2:prop:D3D6} as follows.

\begin{cor}
 \label{NVD3D6:cor}
Suppose that $\lambda\ge 0.7034$. Then, we have $|q_i|\ll |F_i|$ and $|q_i|^6 \ll |T_iF_i^4|$ for each $i\in I$ large enough so that $T_i\neq 0$ and $F_i\neq 0$.
\end{cor}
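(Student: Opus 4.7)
The plan is essentially a one-line combination of two results already established in the paper, since the corollary is explicitly presented as a rewriting of Proposition \ref{NVD2:prop:D3D6} in light of the non-vanishing results just proved. No new estimate is needed.

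First I would appeal to Proposition \ref{NVD3D6:propD3}, which under the weaker hypothesis $\lambda > (5-\sqrt{13})/2 \cong 0.697$ guarantees that $D^{(3)}_i \neq 0$ and $D^{(6)}_i \neq 0$ for every sufficiently large $i \in I$. Since $0.7034 > (5-\sqrt{13})/2$, this applies in the setting of the corollary, so both non-vanishing properties are available from some index onward.

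Second, I would invoke Proposition \ref{NVD2:prop:D3D6}, whose hypothesis $\lambda \ge 0.7034$ is precisely what we are assuming. That proposition delivers $|q_i| \ll |F_i|$ under the hypothesis $D^{(3)}_i \neq 0$, and $|q_i|^6 \ll |T_i F_i^4|$ under the joint hypothesis $D^{(3)}_i \neq 0$ and $D^{(6)}_i \neq 0$, provided also that $T_i\neq 0$ and $F_i\neq 0$. Combining this with the first step, both of these conditional hypotheses become automatic for large $i \in I$, so both conclusions hold unconditionally once $i$ is large enough and satisfies $T_i\neq 0$, $F_i \neq 0$.

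There is no genuine obstacle at this stage. The substantive work — the analytic estimates of Sections \ref{sec:F} and \ref{sec:NVD2}, the vanishing-of-$F$ analysis leading to Corollaries \ref{NVF:cor:oT} and \ref{NVD2:thm:cor}, and the algebraic identity \eqref{search:eq:D6bis} used in Proposition \ref{NVD3D6:propD3} — has already been carried out; the present corollary is simply the clean statement one obtains after those pieces are in place.
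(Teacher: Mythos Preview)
Your proposal is correct and matches the paper's approach exactly: the paper does not even write out a proof, stating only that ``in view of the above result, we may rewrite Proposition \ref{NVD2:prop:D3D6} as follows,'' which is precisely the combination of Proposition \ref{NVD3D6:propD3} with Proposition \ref{NVD2:prop:D3D6} that you describe.
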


\begin{proof}[Proof of Theorem \ref{intro:thm:main}]
Suppose on the contrary that $\lambda >\mu=2(9+\sqrt{11})/35$.  Applying the usual estimates, as in the proof of Corollary \ref{NVD2:thm:cor}, the above corollary yields, for each large enough $i\in I$,
\[
 1 \ll |q_i|^{-6}|T_iF_i^4|
   \ll X_i X_{i+1}^{6-17\lambda} X_j^3
   \ll X_{h+1}^{1-\lambda} X_{i+1}^{10-21\lambda} X_{j+1}^{3(1-\lambda)}
\]
where $h$ denotes the predecessor of $i$ in $I$ and $j$ its successor.  We also have
\[
 1 \le |S_j|
   \ll X_j^2X_{j+1}^{-\lambda}
   \ll X_{i+1}^{2(1-\lambda)} X_{j+1}^{2-3\lambda}.
\]
Since $\lambda>\mu$, these estimates remain valid if we substitute $\mu$ for $\lambda$ in both of them.  More precisely, for each large enough $i\in I$, we have
\[
 1 \le X_{h+1}^{1-\mu} X_{i+1}^{10-21\mu} X_{j+1}^{3(1-\mu)}
 \et
 1 \le X_{i+1}^{2(1-\mu)}(2X_{j+1})^{2-3\mu}.
\]
Put
\[
 \nu = \frac{2(1-\mu)}{3\mu-2} = 2+\sqrt{11}.
\]
Then, the last two inequalities are respectively equivalent to
\[
 \left(\frac{X_{i+1}^\nu}{X_{j+1}}\right)^{3\nu} \le \frac{X_{h+1}^\nu}{X_{i+1}}
 \et
 2 \le \frac{X_{i+1}^\nu}{X_{j+1}}\,.
\]
This is impossible since the second inequality shows that, for all large enough pairs of consecutive elements $i<j$ of $I$, the ratios $X_{i+1}^\nu/X_{j+1}$ are bounded below by $2$, while the first implies that they decrease to $1$ as $i$ goes to infinity in $I$.
\end{proof}

%
%

\section{A general family of auxiliary polynomials}
\label{sec:pol}

Let $d\in \bN$.  For each non-empty subset $E$ of the set
\[
 \cT_d:=\{ (m,n)\in\bN^2 \,;\, 2m+3n\le d \},
\]
we choose a non-zero polynomial $P_E$ of $\cR_d$ of the form
\[
 P_E = \sum_{(m,n)\in E} a_{m,n} T^{d-2m-3n}F^m(S^2V)^n
\]
whose $J$-valuation is maximal (i.e.\ which lies in $J^{(\ell)}$ for a largest possible $\ell\in \bN$).  We will not need its precise $J$-valuation but just the fact that, by Theorem \ref{search:thm:dim}, we have $P_E\in J^{(k+1)}$ if $|E|>\tau(k)$ for some integer $k\ge -1$.  We denote by $\cP_d$ the finite set of all polynomials $P_E$ as $E$ runs through the non-empty subsets of $\cT_d$.  The goal of this section is to prove the following result.

\begin{theorem}
\label{pol:thm}
Suppose that $\lambda >\lambda_0:=(1+3\sqrt{5})/11 \cong 0.7007$.  Then there exists a positive integer $d$ and a polynomial $P\in\cP_d$ such that $P(\ux_i,\ux_j)=0$ for infinitely many pairs of consecutive elements $i<j$ in $I$.
\end{theorem}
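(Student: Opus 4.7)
The plan is a pigeonhole argument in the graded ring $\cR$ that combines the codimension estimate of Theorem~\ref{search:thm:dim} with the divisibility in Proposition~\ref{search:prop:divJk}. Fix $d\ge 1$. For each pair of consecutive elements $i<j$ of $I$ and each $(m,n)\in\cT_d$, set
\[
 M_{m,n}(i) := T_i^{d-2m-3n}F_i^m(S_i^2V_i)^n \in \bZ.
\]
Only finitely many orderings of the multiset $\{|M_{m,n}(i)|:(m,n)\in\cT_d\}$ are possible, so a diagonal extraction yields an infinite $I'\subseteq I$ on which this ordering is constant. For each integer $k$ with $1\le k\le d+1$, let $E_k\subseteq\cT_d$ denote the set of the $\tau(k-1)+1$ pairs $(m,n)$ giving the smallest values of $|M_{m,n}(i)|$; this is a fixed subset on $I'$. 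Since $|E_k|>\tau(k-1)$, Theorem~\ref{search:thm:dim} forces $P_{E_k}\in\cR_d\cap J^{(k)}$, and then Proposition~\ref{search:prop:divJk} gives $q_i^k \mid P_{E_k}(\ux_i,\ux_j)$ for all $i\in I'$. The coefficients of $P_{E_k}$ are fixed, so
\[
 |P_{E_k}(\ux_i,\ux_j)| \ll \max_{(m,n)\in E_k}|M_{m,n}(i)|,
\]
and a strict inequality of the right-hand side below $|q_i|^k$ forces $P_{E_k}(\ux_i,\ux_j)=0$. Only finitely many subsets of $\cT_d$ exist, so a final application of the pigeonhole principle will then produce a single $P\in\cP_d$ vanishing at $(\ux_i,\ux_j)$ for infinitely many $i$.

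It remains to choose $d$ and $k$ so that $\max_{(m,n)\in E_k}|M_{m,n}(i)|=o(|q_i|^k)$ as $i\to\infty$ in $I'$. For this I will further refine $I'$ so that the ratios $a=\log X_i/\log X_{i+1}$, $b=\log X_j/\log X_{i+1}$, $c=\log X_{j+1}/\log X_{i+1}$ converge to limits, and likewise the normalized log-sizes $t$, $f$, $g$, $h$ of $|T_i|$, $|F_i|$, $|S_i^2V_i|$, $|q_i|$. The estimates of Sections~\ref{sec:prelim}--\ref{sec:NVD3D6} translate into a system of linear inequalities in these limits: crucial inputs include $|T_i|\asymp (X_j/X_i)|S_i|$ (Proposition~\ref{prelim:prop:est_Phi}), $|F_i|\ll X_j^2L_i^4$ (Proposition~\ref{F:prop:Fnotzero}), $|q_i|\asymp X_j/X_{i+1}$ (Proposition~\ref{search:prop:qi}), the height inequalities of Proposition~\ref{prelim:prop:height} and Corollary~\ref{prelim:cor:approximable}, the bounds of Lemma~\ref{F:lemma:tech_alpha_beta}, and the positivity $2t-f>0$, $3t-g>0$ implied by Corollary~\ref{NVF:cor:oT} and Lemma~\ref{search:lemma:oTcube}. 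Scaling $(m,n)=(du,dv)$ and letting $d\to\infty$ with $\kappa=k/d$, the desired inequality becomes the area comparison
\[
 \operatorname{area}\big(\{(u,v)\in\bR_{\ge 0}^2 : 2u+3v\le 1,\ (2t-f)u+(3t-g)v > t-\kappa h\}\big) > \frac{\kappa^2}{12},
\]
since $\tau(\ell)/\ell^2\to 1/12$ as $\ell\to\infty$.

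The principal obstacle will be to carry out the ensuing constrained optimization and to identify the critical threshold. Concretely, one must show that, subject to the system of inequalities linking $a,b,c,t,f,g,h$ to $\lambda$, the strict area inequality above is solvable in $\kappa$ for some admissible limit configuration precisely when $\lambda>\lambda_0=(1+3\sqrt{5})/11$. I expect the extremum to be attained where several of the background estimates become sharp simultaneously, producing a quadratic equation in $\lambda$ whose positive root is $\lambda_0$. Auxiliary technical points will include controlling the additive error between $\tau(\ell)$ and $\ell^2/12$, handling boundary cases in which several monomials tie in magnitude (so the set $E_k$ is not uniquely specified), and approximating the extremal $\kappa$ by rationals $k/d$ with $d$ large enough that the strict inequality survives the passage from the continuous limit back to integer exponents.
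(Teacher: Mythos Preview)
Your pigeonhole-plus-area scheme is the paper's approach. Its Proposition~\ref{pol:prop} carries out exactly your area comparison: normalizing so that $\log|q_i|=1$ and setting $\sigma=(2t-f)(3t-g)$, the optimal choice $\kappa=6t/(\sigma+6)$ collapses the area inequality to the clean criterion $\sigma\ge 6t^2-6+\epsilon$. The proof of the theorem then amounts to showing that $\sigma<6t^2-6+\epsilon$ for all large $i$ forces $\lambda\le\lambda_0$, which is precisely your ``principal obstacle''.

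Two points to watch at that step. First, your list of limit variables should include $\log X_{h+1}/\log X_{i+1}$, where $h$ is the predecessor of $i$ in $I$: the sharp bound on $X_i$ is $X_i\ll(X_{h+1}X_{i+1})^{1-\lambda}$ from Proposition~\ref{prelim:prop:height}, and the cruder $X_i\ll X_{i+1}^\alpha$ of Lemma~\ref{F:lemma:tech_alpha_beta} is not tight enough to reach $\lambda_0$. Second, and more seriously, after one rewrites the failed criterion in terms of $\rho_i=\log X_{i+1}/\log X_{h+1}$ and $\rho_j=\log X_{j+1}/\log X_{i+1}$, the resulting inequality \emph{does} admit solutions with $\rho_i,\rho_j$ chosen independently in the admissible range, so a bare subsequential limit will not contradict it. The paper exploits the recursion that $\rho_j$ at index $i$ equals $\rho_i$ at the next index $j$: either $\rho_j<\rho_i$ infinitely often, in which case one replaces $\rho_j$ by the larger $\rho_i$ (legitimate since all $\rho_j$-coefficients are positive) and obtains a cubic in $\rho_i$ that factors as $(\rho_i-\beta_0)$ times a quadratic positive on $[2,\infty)$, with $\beta_0=2(1-\lambda_0)/(3\lambda_0-2)$; or $\rho_i\le\rho_j$ eventually, whence the recursion makes $(\rho_i)$ nondecreasing and convergent, and one evaluates on the diagonal $\rho_i=\rho_j=\rho$. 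Your compactness extraction discards this dynamical link and must be supplemented by an argument of this kind.
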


We start with two simple lemmas.

\begin{lemma}
\label{pol:lemma:E}
Let $a,b,c>0$ and $E=\{(m,n)\in\bN^2 \,;\, am+bn\le c \}$.  Then,
\[
 \frac{c^2}{2ab} \le |E| \le \frac{(a+b+c)^2}{2ab}.
\]
In particular, for each $d\in \bN$, we have
\[
 \frac{d^2}{12} \le \tau(d)=|\cT_d| \le \frac{(d+5)^2}{12}.
\]
\end{lemma}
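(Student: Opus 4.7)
The plan is to use a standard lattice-point versus area comparison. The set $E$ consists of the lattice points contained in the closed triangular region
\[
 \Delta = \{(x,y) \in \bR^2 : x \ge 0,\ y \ge 0,\ ax + by \le c\},
\]
whose area is $c^2/(2ab)$. To each lattice point $(m,n) \in E$, I attach the closed unit square $Q_{m,n} = [m,m+1] \times [n,n+1]$. Distinct lattice points give squares with disjoint interiors, so the union $\bigcup_{(m,n) \in E} Q_{m,n}$ has total area equal to $|E|$.

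For the lower bound, I will verify the inclusion $\Delta \subseteq \bigcup_{(m,n) \in E} Q_{m,n}$. Given $(x,y) \in \Delta$, setting $m = \lfloor x \rfloor$ and $n = \lfloor y \rfloor$ produces non-negative integers with $am + bn \le ax + by \le c$, so $(m,n) \in E$ and $(x,y) \in Q_{m,n}$. Comparing areas then gives $|E| \ge c^2/(2ab)$.

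For the upper bound, I will conversely show that $\bigcup_{(m,n) \in E} Q_{m,n} \subseteq \Delta'$, where
\[
 \Delta' = \{(x,y) \in \bR^2 : x \ge 0,\ y \ge 0,\ ax + by \le a + b + c\}
\]
has area $(a+b+c)^2/(2ab)$. Indeed, for $(x,y) \in Q_{m,n}$ with $(m,n) \in E$, the bounds $x \le m+1$ and $y \le n+1$ yield $ax + by \le a(m+1) + b(n+1) = am + bn + a + b \le a + b + c$, while clearly $x, y \ge 0$. Comparing areas a second time produces $|E| \le (a+b+c)^2/(2ab)$.

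The particular statement follows by specialising to $(a,b,c) = (2,3,d)$, since $\cT_d$ is exactly the corresponding set $E$. The proof is entirely elementary, and I anticipate no real obstacle; the only minor care point is that the covering $\Delta \subseteq \bigcup Q_{m,n}$ relies on the floor function and therefore concerns closed squares whose overlaps have measure zero, which is immaterial for the area comparison.
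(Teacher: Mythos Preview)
Your proof is correct and follows essentially the same approach as the paper: both compare the union of unit squares $(m,n)+[0,1]^2$ for $(m,n)\in E$ with the triangles $\{x,y\ge 0,\ ax+by\le c\}$ and $\{x,y\ge 0,\ ax+by\le a+b+c\}$, then read off the inequalities from the areas. You have simply written out the covering verifications more explicitly than the paper does.
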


\begin{proof}
For each $z\in\bR$, set $\cT(z):=\{(x,y)\in\bR^2\,;\, x,y\ge 0 \text{ and } ax+by\le z \}$.  Then,
\[
 \cT(c) \subseteq \bigcup_{(m,n)\in E} \Big( (m,n)+[0,1]^2 \Big) \subseteq \cT(a+b+c)
\]
and the estimates for $|E|$ follow by computing the area of the three regions.  The estimates for $\tau(d)$ correspond to the choice of parameters $a=2$, $b=3$, $c=d$, since for these we have $|E|=\tau(d)$.
\end{proof}

\begin{lemma}
\label{pol:lemma:est}
Suppose that $\lambda>(5-\sqrt{13})/2\cong 0.697$.  For each sufficiently large element $i$ of $I$, we have
\[
 1\le |F_iS_i^2V_i| \le |T_i|
 \et
 |q_i|\le |T_i|\le |q_i|^3.
\]
\end{lemma}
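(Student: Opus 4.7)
The plan is to treat the four inequalities separately, drawing on the non-vanishing theorems and growth estimates built up in earlier sections. The first inequality $1 \le |F_iS_i^2V_i|$ will follow at once from the fact that $F_i$, $S_i$, $V_i$ are all non-zero integers for sufficiently large $i \in I$: under the hypothesis $\lambda > (5-\sqrt{13})/2 > 2/3$, Theorem \ref{prelim:thm:phi} forces $S_i = \varphi(\ux_i) \neq 0$ and $V_i = \varphi(\ux_j) \neq 0$ (where $j$ denotes the successor of $i$ in $I$), while Theorem \ref{NVF:thm} forces $F_i \neq 0$. The upper bound $|F_iS_i^2V_i| \le |T_i|$ is then immediate from Corollary \ref{NVF:cor:oT}.

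The middle inequality $|q_i| \le |T_i|$ is almost as direct. Combining $|T_i| \asymp (X_j/X_i)|S_i|$ from Proposition \ref{prelim:prop:est_Phi} (applied to the triple $i \le i \le j$) with $|q_i| \asymp X_j/X_{i+1}$ from Proposition \ref{search:prop:qi}(a) yields
\[
 \frac{|T_i|}{|q_i|} \asymp \frac{X_{i+1}}{X_i}\, |S_i|,
\]
and since $X_i^2 \ll X_{i+1}$ for $i \in I$ by Corollary \ref{prelim:cor:XjLj}, while $|S_i| \ge 1$, this ratio tends to infinity.

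The heart of the proof is the remaining inequality $|T_i| \le |q_i|^3$. I plan to use the same asymptotic formulas for $|T_i|$ and $|q_i|$ together with the upper bound $|S_i| \ll X_i^2 X_{i+1}^{-\lambda}$ from Corollary \ref{prelim:cor:est_varphi} to reduce the problem to showing that $X_j^2 X_{i+1}^{\lambda-3}/X_i \to \infty$. To close this, I will invoke the two sharpest available inequalities: $X_j \gg X_{i+1}^{2\lambda}$ from Proposition \ref{F:prop:Fnotzero} (applicable because $F_i \neq 0$) and $X_i \ll X_{i+1}^{(1-\lambda)/\lambda}$ from Corollary \ref{prelim:cor:approximable}. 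Substituting both produces a lower bound of the form $X_{i+1}^{(5\lambda^2-2\lambda-1)/\lambda}$, whose exponent is positive precisely when $\lambda > (1+\sqrt{6})/5 \cong 0.690$, a condition comfortably implied by $\lambda > (5-\sqrt{13})/2 \cong 0.697$.

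The one place where I expect some friction is matching the numerology. The cruder bound $X_i \ll X_{i+1}^{1/2}$ coming from $X_i^2 \ll X_{i+1}$ would yield the critical exponent $5\lambda - 7/2$ and force $\lambda > 7/10$, which is strictly stronger than the hypothesis. Thus it is essential to use the sharper estimate from Corollary \ref{prelim:cor:approximable}, where the exponent $(1-\lambda)/\lambda$ drops below $1/2$ for $\lambda > 2/3$. Once this is in place, the calculation is mechanical.
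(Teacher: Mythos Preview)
Your proof is correct and follows essentially the same approach as the paper's: non-vanishing plus Corollary~\ref{NVF:cor:oT} for the first chain, then the asymptotics $|T_i|\asymp X_iX_j\delta_i$ and $|q_i|\asymp X_j/X_{i+1}$ together with $X_j\gg X_{i+1}^{2\lambda}$ for the second. The only difference is that the paper bounds $X_i$ by $X_{i+1}^\alpha$ via Lemma~\ref{F:lemma:tech_alpha_beta} (implicitly using $F_h\neq 0$ for the predecessor $h$), whereas you use the slightly weaker but still sufficient $X_i\ll X_{i+1}^{(1-\lambda)/\lambda}$ from Corollary~\ref{prelim:cor:approximable}; your choice is a touch more self-contained and, as you note, still leaves a positive margin.
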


\begin{proof}
The first series of inequalities is a direct consequence of Corollary \ref{NVF:cor:oT} together with the fact that $F_i$, $S_i$ and $V_i$ do not vanish for each sufficiently large $i$ (by Theorems \ref{prelim:thm:phi} and \ref{NVF:thm}).  For the second series, we first recall that, by Proposition \ref{prelim:prop:est_Phi}, we have
$|T_i| \asymp X_iX_j\delta_i$ for each pair of consecutive elements $i<j$ in $I$ with $i$ large enough.  Combining this with the estimates $X_i^{-2} \ll \delta_i \ll X_{i+1}^{-\lambda}$ of Corollary \ref{prelim:cor:est_varphi}, and $X_i\ll X_{i+1}^\alpha$ of Lemma \ref{F:lemma:tech_alpha_beta}, we obtain
\[
 X_{i+1}^{-\alpha} X_j \ll |T_i| \ll X_{i+1}^{\alpha-\lambda} X_j.
\]
Since $|q_i|\asymp X_j/X_{i+1}$ and $\alpha < 0.4$, this yields $|q_i| =o(|T_i|)$.  Using the estimate $X_j\gg X_{i+1}^{2\lambda}$ from Proposition \ref{F:prop:Fnotzero}, it also yields
\[
 |T_i| \ll |q_i|^3 X_{i+1}^{3.4-\lambda} X_j^{-2}
       \ll |q_i|^3 X_{i+1}^{3.4-5\lambda}
       =o(|q_i|^3).
\qedhere
\]
\end{proof}

The next result provides the main tool in the proof of Theorem \ref{pol:thm}.

\begin{prop}
\label{pol:prop}
Suppose that $\lambda>(5-\sqrt{13})/2\cong 0.697$ and let $\epsilon>0$.  Then, there exist integers $d=d(\epsilon)$ and $i_0=i_0(\epsilon)$ with the following property.  For each pair of consecutive elements $i<j$ in $I$ with $i\ge i_0$ such that
\begin{equation}
\label{pol:prop:eq1}
 \log\left|\frac{T_i^2}{F_i}\right| \log\left|\frac{T_i^3}{S_i^2V_i}\right|
 \ge
 6(\log|T_i|)^2 - (6-\epsilon)(\log|q_i|)^2,
\end{equation}
there exists a polynomial $P\in \cP_d$ such that $P(\ux_i,\ux_j)=0$.
\end{prop}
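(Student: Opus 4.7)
The plan is to combine Lemma \ref{pol:lemma:E} with Theorem \ref{search:thm:dim}: for each pair $(i,j)$ of consecutive elements of $I$ satisfying \eqref{pol:prop:eq1}, I will exhibit an integer $s$ and a subset $E\subset \cT_d$ with $|E|>\tau(s)$, so that $P_E\in J^{(s+1)}$ by Theorem \ref{search:thm:dim}, and then show $|P_E(\ux_i,\ux_j)|<|q_i|^{s+1}$; Proposition \ref{search:prop:divJk} will then force $P_E(\ux_i,\ux_j)=0$.

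Write $a=\log|T_i|$, $b=\log|F_i|$, $c=\log|S_i^2V_i|$, $e=\log|q_i|$. Lemma \ref{pol:lemma:est} gives, for $i$ large, $b,c\ge 0$, $b+c\le a$, $e\le a\le 3e$, so $t:=a/e\in[1,3]$. The hypothesis \eqref{pol:prop:eq1} rewrites as $(2a-b)(3a-c)\ge 6a^2-(6-\epsilon)e^2$; dividing by $e^2$ and setting $P=(2a-b)/e$, $Q=(3a-c)/e$ we get $P\le 2t$, $Q\le 3t$, and $PQ\ge 6(t^2-1)+\epsilon$. For a parameter $\sigma\in(0,1)$ to be chosen and $s=\lfloor\sigma d\rfloor$, let $E$ consist of those $(m,n)\in\cT_d$ for which $|T_i|^{d-2m-3n}|F_i|^m|S_i^2V_i|^n\le|q_i|^s$. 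Taking logs, this condition becomes $Pm+Qn\ge dt-s$, so Lemma \ref{pol:lemma:E} applied to the complement yields $|\cT_d\setminus E|\le(P+Q+dt-s)^2/(2PQ)$, and combining with $\tau(d)\ge d^2/12$ and $\tau(s)\le(s+5)^2/12$,
\[
 |E|-\tau(s)\ge d^2\left[\frac{1-\sigma^2}{12}-\frac{(t-\sigma)^2}{2PQ}\right]-O(d),
\]
the implied constant being uniform in $t\in[1,3]$.

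The algebraic identity $6(t-\sigma)^2-6(t^2-1)(1-\sigma^2)=6(t\sigma-1)^2$ together with the hypothesis $PQ\ge 6(t^2-1)+\epsilon$ reduces positivity of the bracket to the condition $(t\sigma-1)^2<(1-\sigma^2)\epsilon/6$. Maximizing the defect in $\sigma$ gives the optimal choice $\sigma^*(t)=6t/(\epsilon+6t^2)$, which produces a margin bounded below by $\epsilon^2/(6(\epsilon+6))$ uniformly for $t\in[1,3]$ (the minimum occurring at $t=1$). Together with $PQ\le 54$, this yields a uniform lower bound $c(\epsilon)>0$ for the bracket, so for $d=d(\epsilon)$ large enough, $|E|>\tau(s)$ holds for every $(i,j)$ satisfying \eqref{pol:prop:eq1} with $i$ large, and Theorem \ref{search:thm:dim} gives $P_E\in J^{(s+1)}$.

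Finally, $P_E$ can be chosen with integer coefficients bounded by a constant $C_d$ depending only on $d$, so $|P_E(\ux_i,\ux_j)|\le C_d\tau(d)|q_i|^s$. Since $|q_i|\gg X_{i+1}^{2\lambda-1}\to\infty$ (by Proposition \ref{F:prop:Fnotzero}, using $\lambda>1/2$), enlarging $i_0$ ensures $|q_i|>C_d\tau(d)$, whence $|P_E(\ux_i,\ux_j)|<|q_i|^{s+1}$, and Proposition \ref{search:prop:divJk} forces $P_E(\ux_i,\ux_j)=0$. The main technical obstacle is the uniform positivity of the margin for $t\in[1,3]$, especially as $t\to 1^+$: the naive choice $\sigma=1/t$ collapses to zero margin at $t=1$, and only the optimal $\sigma^*$ yields a bound depending on $\epsilon$ alone, making a single $d=d(\epsilon)$ adequate for all pairs $(i,j)$ under consideration.
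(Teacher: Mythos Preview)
Your proof is correct and follows essentially the same route as the paper's: define $E\subset\cT_d$ by the size condition $|T_i|^{d-2m-3n}|F_i|^m|S_i^2V_i|^n\le|q_i|^s$, bound $|\cT_d\setminus E|$ via Lemma~\ref{pol:lemma:E}, deduce $|E|>\tau(s)$ so that $P_E\in J^{(s+1)}$ by Theorem~\ref{search:thm:dim}, and conclude with Proposition~\ref{search:prop:divJk}. The only cosmetic difference is the choice of threshold: the paper takes $k=\lfloor 6td/(PQ+6)\rfloor$ (adapted to the actual value of $PQ$), while you take $s=\lfloor\sigma^*(t)d\rfloor$ with $\sigma^*(t)=6t/(6t^2+\epsilon)$ (adapted to the worst case $PQ=6(t^2-1)+\epsilon$) and verify positivity through the identity $6(t-\sigma)^2-6(t^2-1)(1-\sigma^2)=6(t\sigma-1)^2$; both give a uniform positive margin in $t\in[1,3]$ and the arguments coincide thereafter.
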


\begin{proof}
Fix $i\in I$ satisfying \eqref{pol:prop:eq1}, with $i$ large enough so that we have $|q_i|\ge 3$ and that all the inequalities of Lemma \ref{pol:lemma:est} are satisfied.  Then, none of the integers $F_i$, $S_i$, $T_i$, $V_i$ is zero, and we may define
\[
 f=\frac{\log|F_i|}{\log|q_i|},
 \quad
 s=\frac{\log|S_i^2V_i|}{\log|q_i|},
 \quad
 t=\frac{\log|T_i|}{\log|q_i|}
 \et
 \sigma=(2t-f)(3t-s).
\]
More precisely, we have $|F_i|\ge 1$, $|S_i^2V_i| \ge 1$, and so Lemma \ref{pol:lemma:est} yields
\begin{equation}
\label{pol:prop:eqfst}
 0 \le f \le t, \quad 0 \le s\le t \et 1 \le t\le 3,
\end{equation}
while the hypothesis \eqref{pol:prop:eq1} becomes
\begin{equation}
\label{pol:prop:eqsigma}
 \sigma \ge 6t^2-(6-\epsilon).
\end{equation}
Let $d\in\bN^*$ and let $E$ denote the set of points $(m,n)\in\cT_d$ satisfying
\begin{equation}
\label{pol:prop:eq2}
 \left| T_i^{d-2m-3n} F_i^m (S_i^2V_i)^n \right| \le |q_i|^k,
 \quad\text{with}\quad
 k:=\left[\frac{6td}{\sigma+6}\right].
\end{equation}
We claim that we can choose $d$ so that $|E| > \tau(k)$ when $i$ is large enough.  If we take this for granted, then we have $P_E\in J^{(k+1)}$ and so $q_i^{k+1}$ divides $P_E(\ux_i,\ux_j)$, where $j$ denotes the successor of $i$ in $I$. On the other hand, the conditions \eqref{pol:prop:eq2} imply that $|P_E(\ux_i,\ux_j)| \le c(d) |q_i|^k$ with a constant $c(d)$ depending only on $d$.  Thus, for $i$ large enough, we have $|P_E(\ux_i,\ux_j)| < |q_i|^{k+1}$ and so $P_E(\ux_i,\ux_j)=0$, as requested by the theorem.

To prove the claim,  we first note that the condition \eqref{pol:prop:eq2} is equivalent to
\[
 (d-2m-3n)t+mf+ns \le k \ssi (2t-f)m +(3t-s)n \ge dt-k.
\]
Applying Lemma \ref{pol:lemma:E} with $a=2t-f$, $b=3t-s$, $c=dt-k$, and noting that these values of $a$, $b$, $c$ are positive in view of \eqref{pol:prop:eqfst} and \eqref{pol:prop:eq2}, we find that the set of points $(m,n)\in\bN^2$ which do not satisfy this condition has cardinality at most $(dt-k+5t)^2/(2\sigma)$.  So we obtain
\[
 |E| \ge |\cT_d|-\frac{(dt-k+5t)^2}{2\sigma}.
\]
The same lemma also gives $|\cT_d| = \tau(d) \ge d^2/12$ and $\tau(k) \le (k+5)^2/12$.  Therefore the condition $|E| > \tau(k)$ is fulfilled if
\begin{equation}
\label{pol:prop:eq3}
 \frac{d^2}{12}-\frac{(dt-k+5t)^2}{2\sigma} > \frac{(k+5)^2}{12}.
\end{equation}
We need to show that this inequality holds as soon as $d$ is large enough, independently of the values of $f$, $s$, $t$ and $\sigma$ satisfying \eqref{pol:prop:eqfst} and \eqref{pol:prop:eqsigma}, as these depend on $i$.  By \eqref{pol:prop:eqfst}, we have $2t^2\le \sigma\le 6t^2$ and $1\le t\le 3$.  Thus, \eqref{pol:prop:eq3} holds if
\[
 d^2 - \frac{6}{\sigma}\left(td-\frac{6td}{\sigma+6}\right)^2 -\left(\frac{6td}{\sigma+6}\right)^2
 \ge c_1d+c_2,
\]
for some absolute constants $c_1$ and $c_2$.  After simplifications, this becomes
\[
 \left(\frac{\sigma-6t^2+6}{\sigma+6}\right) d^2 \ge c_1d+c_2.
\]
Using \eqref{pol:prop:eqsigma} and the crude estimate $\sigma+6\le 6t^2+6\le 60$, we find that the latter inequality holds if $(\epsilon/60) d^2 \ge c_1d+c_2$.  So it holds as soon as $d$ is sufficiently large in terms of $\epsilon$ only.
\end{proof}

\begin{proof}[Proof of Theorem \ref{pol:thm}]
Suppose on the contrary that no such pair $(d,P)$ exists.  Then, for each integer $d\ge 1$, there are only finitely many pairs of consecutive elements $i<j$ of $I$ for which $(\ux_i,\ux_j)$ is a zero of at least one of the polynomials in the finite set $\cP_d$. Therefore, the preceding proposition shows that, for each $\epsilon >0$ and each $i\in I$ with $i\ge i_0(\epsilon)$, we have
\begin{equation}
 \label{pol:thm:eq:main}
 \log\left|\frac{T_i^2}{F_i}\right| \log\left|\frac{T_i^3}{S_i^2V_i}\right|
 < 6(\log|T_i|)^2 - (6-\epsilon)(\log|q_i|)^2.
\end{equation}
Our first goal is to replace this condition by an inequality involving only $\log X_{h+1}$, $\log X_{i+1}$ and $\log X_{j+1}$ where, as usual, $h$ denotes the element of $I$ that comes immediately before $i$, and $j$ the one that comes immediately after.  To this end, we first note that the hypothesis $\lambda>\lambda_0=(1+3\sqrt{5})/11$ implies that $L_i = o(X_{i+1}^{-\lambda_0})$ and $L_j = o(X_{j+1}^{-\lambda_0})$.  So, for $i$ is large enough, we have
\begin{align}
 &|T_i| \le \oT_i:=X_iX_jX_{i+1}^{-\lambda_0}
   &&\text{by Proposition \ref{prelim:prop:est_Phi}},
     \label{pol:thm:eq:majT}\\
 &|F_i| \le \oF_i:=X_j^2X_{i+1}^{-4\lambda_0}
   &&\text{by Proposition \ref{F:prop:Fnotzero}},
     \label{pol:thm:eq:majF}\\
 &|S_i^2V_i| \le \oG_i:=(X_i^2X_{i+1}^{-\lambda_0})^2(X_j^2X_{j+1}^{-\lambda_0})
   &&\text{by Corollary \ref{prelim:cor:est_varphi}},
     \label{pol:thm:eq:majSV}\\
 &X_i \le X_{h+1}^{1-\lambda}X_{i+1}^{1-\lambda_0},
 \quad
  X_j \le (X_{i+1}X_{j+1})^{1-\lambda_0}
   &&\text{by Proposition \ref{prelim:prop:height}}.
     \label{pol:thm:eq:majX}
\end{align}
Moreover, the estimates of Lemma \ref{F:lemma:tech_alpha_beta} combined with $|q_i| \asymp X_j/X_{i+1}$  from Proposition \ref{search:prop:qi} imply that
\[
 \log |q_i| =\log(X_j/X_{i+1})+\cO(1) \asymp \log X_{i+1} \asymp \log X_{h+1}.
\]
So, there is a constant $c>0$, independent of the choice of $\epsilon>0$, such that, for $i$ large enough, we have
\begin{equation}
 \label{pol:thm:eq:minq}
 (6-\epsilon)(\log |q_i|)^2 \ge 6(\log(X_j/X_{i+1}))^2-c\epsilon(\log X_{h+1})^2.
\end{equation}
Assume that $i$ is large enough so that \eqref{pol:thm:eq:main}--\eqref{pol:thm:eq:minq} hold. If we subtract the left hand side of \eqref{pol:thm:eq:main} from its right hand side and expand the resulting expression as a polynomial in $\log |T_i|$, we find a linear polynomial whose coefficient of $\log |T_i|$ is $\log |F_i^3(S_i^2V_i)^2| \ge 0$.  Therefore \eqref{pol:thm:eq:main} remains true if we replace everywhere $|T_i|$ by the upper bound $\oT_i$ given by \eqref{pol:thm:eq:majT}.  By \eqref{pol:thm:eq:majF} and \eqref{pol:thm:eq:majSV}, we also have
\begin{equation}
 \label{pol:thm:eq:factors}
 \frac{\oT_i^2}{|F_i|} \ge \frac{\oT_i^2}{\oF_i} = X_i^2X_{i+1}^{2\lambda_0}
 \et
 \frac{\oT_i^3}{|S_i^2V_i|} \ge \frac{\oT_i^3}{\oG_i} = X_i^{-1}X_{i+1}^{-\lambda_0}X_jX_{j+1}^{\lambda_0}.
\end{equation}
As both of these lower bounds are greater than $1$, they have a positive logarithm. So, we may further replace $|F_i|$ by $\oF_i$ and $|S_i^2V_i|$ by $\oG_i$, and thus
\[
 \log\frac{\oT_i^2}{\oF_i} \log\frac{\oT_i^3}{\oG_i}
 < 6(\log\oT_i)^2 -(6-\epsilon)(\log |q_i|)^2.
\]
Using \eqref{pol:thm:eq:majF}, \eqref{pol:thm:eq:minq} and \eqref{pol:thm:eq:factors}, this yields
\[
 \begin{aligned}
 0
  &< -\log(X_i^2X_{i+1}^{2\lambda_0})\log(X_i^{-1}X_{i+1}^{-\lambda_0}X_jX_{j+1}^{\lambda_0}) \\
     &\qquad + 6 \big(\log(X_iX_jX_{i+1}^{-\lambda_0})\big)^2
             - 6 \big(\log(X_jX_{i+1}^{-1})\big)^2 + c\epsilon (\log X_{h+1})^2 \\
  &= 2(\log X_i)\log \big( X_i^4 X_{i+1}^{-4\lambda_0} X_j^5 X_{j+1}^{-\lambda_0} \big) \\
     &\qquad + 2(\log X_{i+1})\log \big( X_{i+1}^{4\lambda_0^2-3} X_j^{6-7\lambda_0} X_{j+1}^{-\lambda_0^2} \big)
             + c\epsilon (\log X_{h+1})^2.
 \end{aligned}
\]
Note that, in this last expression, the first product is positive for $i$ large enough because, using $X_i\gg X_{i+1}^{\lambda_0/2}$ and $X_j\gg X_{j+1}^{\lambda_0/2}$ (Lemma \ref{F:lemma:tech_alpha_beta}), we find that
\[
 X_i^4 X_{i+1}^{-4\lambda_0} X_j^5 X_{j+1}^{-\lambda_0}
  \gg X_{i+1}^{-2\lambda_0} X_j^3
  \ge X_j^{3-2\lambda_0}
\]
tends to infinity with $i$.  This allows us to use \eqref{pol:thm:eq:majX} to eliminate both $X_i$ and $X_j$.  After simplifications, this yields
\begin{equation}
 \label{pol:thm:eq:mainbis}
 \begin{aligned}
 0 < &(4(1-\lambda)^2+c\epsilon/2)(\log X_{h+1})^2 \\
     &+ (1-\lambda)(13-17\lambda_0)(\log X_{h+1})(\log X_{i+1})\\
     &+ (1-\lambda)(5-6\lambda_0)(\log X_{h+1})(\log X_{j+1})\\
     &+ (12-35\lambda_0+24\lambda_0^2)(\log X_{i+1})^2\\
     &+ (11-24\lambda_0+12\lambda_0^2)(\log X_{i+1})(\log X_{j+1})
 \end{aligned}
\end{equation}
Now, put
\[
 \epsilon=\frac{8(1-\lambda_0)^2-8(1-\lambda)^2}{c},\quad
 \rho_i=\frac{\log X_{i+1}}{\log X_{h+1}}, \quad
 \rho_j=\frac{\log X_{j+1}}{\log X_{i+1}},
\]
thus fixing the choice of $\epsilon>0$.  We substitute this value of $\epsilon$ into \eqref{pol:thm:eq:mainbis} and note that the resulting inequality remains valid if we replace $\lambda$ by $\lambda_0$.  After dividing both sides by $(\log X_{h+1})^2$, it yields
\begin{equation}
 \label{pol:thm:eq:mainter}
 \begin{aligned}
 0 < 4(1-\lambda_0)^2
          &+ (1-\lambda_0)(13-17\lambda_0)\rho_i + (1-\lambda_0)(5-6\lambda_0)\rho_i\rho_j\\
          &+ (12-35\lambda_0+24\lambda_0^2)\rho_i^2 + (11-24\lambda_0+12\lambda_0^2)\rho_i^2\rho_j.
 \end{aligned}
\end{equation}

Suppose that there are arbitrarily large pairs of consecutive elements $i<j$ in $I$ with $\rho_j<\rho_i$.  Then, \eqref{pol:thm:eq:mainter} holds with $\rho_j$ replaced by $\rho_i$ because in the right hand side of this inequality all terms involving $\rho_j$ have positive coefficients.  This means that
\begin{equation}
 \label{pol:thm:eq:rho}
 \begin{aligned}
 0 < 4(1-\lambda_0)^2
          &+ (1-\lambda_0)(13-17\lambda_0)\rho_i\\
          &+ (17-46\lambda_0+30\lambda_0^2)\rho_i^2 + (11-24\lambda_0+12\lambda_0^2)\rho_i^3.
 \end{aligned}
\end{equation}
On the other hand, Lemma \ref{F:lemma:tech_alpha_beta} gives $\rho_i \le \beta+o(1)$ with $\beta=2(1-\lambda)/(3\lambda-2)$ and so, for $i$ large enough, we have
\[
 \rho_i < \beta_0:=\frac{2(1-\lambda_0)}{3\lambda_0-2}=\frac{5+3\sqrt{5}}{2}.
\]
This is a contradiction because \eqref{pol:thm:eq:rho} can be rewritten in the form
\begin{equation}
 \label{pol:thm:eq:factor}
 0 < (\rho_i-\beta_0)(a\rho_i^2-b\rho_i-c)
 \quad
 \text{with $a>b>c>0$}
\end{equation}
while it follows from Corollary \ref{prelim:cor:XjLj} that $\rho_i\ge 2+o(1)$ and so $a\rho_i^2-b\rho_i-c>0$ for each sufficiently large $i$.

Thus we have $\rho_i\le \rho_j$ for each sufficiently large pairs of consecutive elements $i<j$ in $I$.  Then $(\rho_i)_{i\in I}$ converges to a limit $\rho$ with $2\le \rho\le \beta < \beta_0$ and, by continuity, the inequality \eqref{pol:thm:eq:mainter} holds with $\rho_i$ and $\rho_j$ replaced by $\rho$.  So \eqref{pol:thm:eq:factor} holds with $\rho_i$ replaced by $\rho$.  Again, this is impossible.
\end{proof}

\begin{remark}
\label{pol:remark:rel_prime}
As the proof of Proposition \ref{pol:prop} shows, when \eqref{pol:prop:eq1} holds, we get several polynomials $P$ satisfying $P(\ux_i,\ux_j)=0$ for the same pairs $(i,j)$ by varying the integer $d$.  If we could make these relatively prime as a set, this would contradict Proposition \ref{search:prop:remark}, meaning that \eqref{pol:thm:eq:main} holds for any given $\epsilon>0$ and any sufficiently large $i\in I$.  Then, the above argument would yield $\lambda \le (1+3\sqrt{5})/11$.
\end{remark}

\begin{remark}
Assuming that $\lambda = (1+3\sqrt{5})/11$, we also note that \eqref{pol:thm:eq:main} holds for any given $\epsilon>0$ and any sufficiently large $i\in I$ when
\[
 X_j \asymp X_i^{(5+3\sqrt{5})/2}, \quad
 X_{i+1} \asymp X_i^{(3\sqrt{5}-1)/2}, \quad
 L_i \asymp X_{i+1}^{-\lambda} \asymp X_i^{-2},
\]
where $j$ stands for the successor of $i$ in $I$.  Then, one finds that $|S_i|\asymp |V_i|\asymp 1$, $|T_i| \asymp X_i^{3(1+\sqrt{5})/2}$, $|F_i|\asymp X_i^{3(\sqrt{5}-1)}$ and $|q_i|\asymp X_i^3$.
\end{remark}

%
%

\section{A special family of auxiliary polynomials}
\label{sec:special}

In view of Remark \ref{pol:remark:rel_prime} above, we would get $\lambda \le (1+3\sqrt{5})/11$ if we could prove for example that the polynomials of $\cP_d$ are irreducible for arbitrarily large values of $d$.  This is probably too much to ask.  Nevertheless, this can be done for certain polynomials of the type $P_E$ as the next result illustrates.

\begin{theorem}
\label{special:thm}
Let $d=12\ell+2$ for some $\ell\in\bN$, and let
\[
 E_\ell=\left\{ (m,n)\in\bN^2\,;\, 2m+3n\le d, \ \frac{m}{6\ell+1}+\frac{n}{3\ell}\ge 1 \right\}.
\]
Then the set of polynomials of $\cR_d \cap J^{(6\ell+2)}$ of the form
\begin{equation}
 \label{special:thm:eq:P}
 \sum_{(m,n)\in E_\ell} a_{m,n} T^{d-2m-3n}F^m(S^2V)^n
\end{equation}
is a one-dimensional vector space generated by an irreducible polynomial $P_\ell$ of $\cR$ which has $a_{6\ell+1,0}\neq 0$ and $a_{0,3\ell}\neq 0$.
\end{theorem}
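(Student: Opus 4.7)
The plan has three parts: a Pick-theorem count establishing $|E_\ell|=\tau(6\ell+1)+1$; a dimension-theoretic argument that simultaneously gives uniqueness of $P_\ell$ up to scalar and the non-vanishing of $a_{6\ell+1,0}$ and $a_{0,3\ell}$; and a Newton-polygon argument for irreducibility.

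For the count, consider the triangle $\cT\subseteq\bR^2$ with vertices $(0,0)$, $(6\ell+1,0)$, $(0,3\ell)$. Since $\gcd(3\ell,6\ell+1)=1$, the hypotenuse of $\cT$ has only its two endpoints as lattice points, and a quick check shows $\cT\subseteq\cT_d$ (the function $2m+3n$ attains its maximum on $\cT$ only at $(6\ell+1,0)$, with value $d$). Pick's theorem then gives $|\cT\cap\bZ^2|=9\ell^2+6\ell+2$, and removing the two hypotenuse points (which lie in $E_\ell$) yields $|\cT_d\setminus E_\ell|=9\ell^2+6\ell$, so $|E_\ell|=\tau(d)-(9\ell^2+6\ell)=3\ell^2+4\ell+2=\tau(6\ell+1)+1$, using $\tau(d)=12\ell^2+10\ell+2$. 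Combined with Theorem~\ref{search:thm:dim}, which gives $\dim(\cR_d\cap J^{(6\ell+2)})=\tau(d)-\tau(6\ell+1)$, an inclusion-exclusion dimension count in $\cR_d$ forces $\dim(V_\ell\cap J^{(6\ell+2)})\ge 1$, where $V_\ell\subseteq\cR_d$ denotes the span of the monomials of \eqref{special:thm:eq:P}.

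For uniqueness together with the two non-vanishing claims, let $V_\ell'\subset V_\ell$ denote the span of $E_\ell\setminus\{(6\ell+1,0)\}$ (respectively $E_\ell\setminus\{(0,3\ell)\}$). Since $\dim V_\ell'=\tau(6\ell+1)=\dim Q$ for $Q:=\cR_d/(\cR_d\cap J^{(6\ell+2)})$, the two claims reduce to showing that the restricted quotient map $V_\ell'\to Q$ is an isomorphism; this would simultaneously give $\dim(V_\ell\cap J^{(6\ell+2)})\le 1$ and $a_{6\ell+1,0}\ne 0$ (respectively $a_{0,3\ell}\ne 0$) for any $P_\ell\in V_\ell\cap J^{(6\ell+2)}$. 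To establish the isomorphism, I would work in the basis $\{T^{d-2m'-3n'}A^{m'}B^{n'}\}$ of $\cR_d$ (using $A=(T^2+3F)/4$ and $B=(T^3-9TF-108S^2V)/4$), in which $Q$ has the explicit basis indexed by $\cT_{6\ell+1}$. Each monomial $T^{d-2m-3n}F^m(S^2V)^n$ for $(m,n)\in E_\ell$ expands with support in the triangle $\{(m',n'):n'\le n,\ m'+n'\le m+n\}$ and a nonzero leading coefficient at $(m,n)$ itself. The goal is to exhibit an appropriate pivoting structure on $E_\ell\setminus\{(6\ell+1,0)\}$ compatible with the projection to $\cT_{6\ell+1}$, so that the resulting $\tau(6\ell+1)\times\tau(6\ell+1)$ transition submatrix is nonsingular.

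Finally, irreducibility of $P_\ell$ in $\cR$ follows from a Newton polygon argument in the $(F,S^2V)$-exponent plane. The support of $P_\ell$ contains $(6\ell+1,0)$ and $(0,3\ell)$, both extremal for the linear functional $3\ell m+(6\ell+1)n$, so they are vertices of $N(P_\ell)$; because $\gcd(3\ell,6\ell+1)=1$, the edge between them has primitive lattice length $1$. If $P_\ell=Q_1Q_2$ in $\cR$ with both factors nonconstant and weighted-homogeneous, then $N(P_\ell)=N(Q_1)+N(Q_2)$ as Minkowski sum, and this primitive edge must decompose as a parallel primitive edge of one factor (say $Q_1$) plus a vertex of the other. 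Nonnegativity of the exponents together with the fact that $S^2V\nmid P_\ell$ (visible from the monomial $F^{6\ell+1}$) forces the endpoints of $Q_1$'s edge to be $(6\ell+1,0)$ and $(0,3\ell)$, making $\deg_w Q_1=2(6\ell+1)=d$ and hence $\deg_w Q_2=0$, a contradiction. The main obstacle is establishing the nonsingularity of the transition matrix in the second step: while the support structure makes the rough shape of the matrix visible, naive monomial orders collapse too many rows to a common ``leading'' column in $Q$, so producing an effective pivoting or an alternative rank argument is the delicate combinatorial heart of the proof.
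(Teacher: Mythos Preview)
Your counting argument for $|E_\ell|=\tau(6\ell+1)+1$ and the resulting existence of a nonzero element in $V_\ell\cap J^{(6\ell+2)}$ are fine, and your Newton-polygon irreducibility argument is essentially the paper's own (the paper's ``index'' $\imath(m,n)=m/(6\ell+1)+n/(3\ell)$ is exactly your linear functional).

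The gap is precisely where you locate it: you have not proved that the two restricted quotient maps $V_\ell'\to Q$ are isomorphisms, and your proposed pivoting scheme in the $(A,B)$-basis is only a hope, not an argument. The paper does \emph{not} attack this via a transition matrix. Instead it introduces the graded subalgebra $\cS=\bQ[F,G,H]$ with $G=TS^2V$ and $H=S^4V^2$, and observes (Lemma~\ref{specvail:lemma:decomp}) that $V_\ell=\cS_{12\ell+2}\oplus\langle G^{3\ell}T^2\rangle_\bQ$. The point is that $\cS$ has a second set of generators $F,M,N$ (with $M=F^2-3G$ and $N=D^{(3)}$) of $J$-valuations $0,2,3$, so an analogue of Theorem~\ref{search:thm:dim} gives $\cS_{12\ell+2}\cap J^{(6\ell+2)}=\{0\}$ immediately. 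This simultaneously yields one-dimensionality and $a_{0,3\ell}\neq 0$ (since $P_\ell\notin\cS_{12\ell+2}$). In effect, the paper replaces your hoped-for matrix argument for $E_\ell\setminus\{(0,3\ell)\}$ by the observation that this set indexes a piece of a \emph{different} polynomial subring that has its own clean $J$-filtration.

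For $a_{6\ell+1,0}\neq 0$ the paper does something you are unlikely to find by pivoting: it multiplies $P_\ell$ by $H$ to land in $\cS_{12\ell+8}\cap J^{(6\ell+2)}$, expands in the $F,M,N$ basis, and then reduces \emph{modulo $2$}. Using $M\equiv F^2+G$ and $N\equiv F^3\pmod{(2,H)}$ together with $M\equiv F^2$ and $N\equiv F^3+H\pmod{(2,G)}$, the coefficients are identified mod $2$ with binomial coefficients $\binom{3\ell+2}{j}$, and the coefficient of $F^{6\ell+1}$ is shown to be congruent to $\sum_k\binom{3\ell+2}{3k}=(2^{3\ell+2}-(-1)^\ell)/3$, hence odd. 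This arithmetic trick is the genuine content you are missing; a purely linear-algebraic rank argument over $\bQ$ seems unlikely to recover it.
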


Before going into its proof, we deduce the following consequence.

\begin{cor}
 \label{special:cor}
Suppose that $\lambda>2/3$, and let $\epsilon>0$.  For each $i\in I$ large enough so that $S_iT_iV_i\neq 0$, we have $|F_i|\gg |q_i|$ or $|T_iS_i^2V_i|\gg |q_i|^{2-\epsilon}$ with implied constants depending only on $\xi$ and $\epsilon$.
\end{cor}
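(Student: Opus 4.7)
The plan is to argue by contradiction using Theorem~\ref{special:thm} with two distinct values of $\ell$. Suppose the conclusion fails; then we may extract a subsequence $(i_k)_{k\ge 1}$ of indices in $I$ with $S_{i_k}T_{i_k}V_{i_k}\ne 0$ along which
\[
 \eta_k:=\max\Bigl(|F_{i_k}|/|q_{i_k}|,\ |T_{i_k}S_{i_k}^2V_{i_k}|/|q_{i_k}|^{2-\epsilon}\Bigr)\longrightarrow 0.
\]
Pick two distinct integers $\ell_1<\ell_2$, both at least a threshold $L_0(\epsilon)$ of order $1/\epsilon$ to be specified. Theorem~\ref{special:thm} furnishes irreducible polynomials $P_{\ell_r}\in\cR_{d_r}\cap J^{(6\ell_r+2)}$ of distinct degrees $d_r=12\ell_r+2$, hence relatively prime in the UFD $\cR$. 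The goal is to show $P_{\ell_r}(\ux_{i_k},\ux_{j_k})=0$ for both $r\in\{1,2\}$ and all $k$ large (where $j_k$ denotes the successor of $i_k$ in $I$); this contradicts Proposition~\ref{search:prop:remark}.

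Fix $\ell\in\{\ell_1,\ell_2\}$. By Proposition~\ref{search:prop:divJk}, $q_{i_k}^{6\ell+2}$ divides $P_{\ell}(\ux_{i_k},\ux_{j_k})$, so vanishing follows once I prove $|P_{\ell}(\ux_{i_k},\ux_{j_k})|<|q_{i_k}|^{6\ell+2}$ for $k$ large. I will bound each monomial $T_{i_k}^{d-2m-3n}F_{i_k}^m(S_{i_k}^2V_{i_k})^{n}$ with $(m,n)\in E_\ell$ individually, drawing on the elementary upper bounds $|F_{i_k}|\le\eta_k|q_{i_k}|$ and $|T_{i_k}S_{i_k}^2V_{i_k}|\le\eta_k|q_{i_k}|^{2-\epsilon}$ (together with $|T_{i_k}|,|S_{i_k}^2V_{i_k}|\ge 1$, from which one derives $|T_{i_k}|,|S_{i_k}^2V_{i_k}|\le\eta_k|q_{i_k}|^{2-\epsilon}$ separately), as well as the crucial lower bound $|T_i|\gg X_i|q_i|$. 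This last estimate is where the hypothesis $\lambda>2/3$ enters beyond its role in Proposition~\ref{search:prop:remark}: combining $|T_i|\asymp X_iX_j\delta_i$ from Proposition~\ref{prelim:prop:est_Phi}, $\delta_i\gg X_i^{-2}$ from Corollary~\ref{prelim:cor:est_varphi}, $X_i^2\ll X_{i+1}$ from Corollary~\ref{prelim:cor:XjLj}, and $|q_i|\asymp X_j/X_{i+1}$ from Proposition~\ref{search:prop:qi}, one finds $|T_i|/|q_i|\gg X_{i+1}/X_i\gg X_i$.

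For a monomial indexed by $(m,n)\in E_\ell$, set $a:=d-2m-3n$ and split on whether $a\ge n$ (Case~A, equivalent to $m+2n\le 6\ell+1$) or $a<n$ (Case~B, $m+2n\ge 6\ell+2$). In Case~A, factoring $T^a(S^2V)^{n}=(TS^2V)^{n}T^{a-n}$ and applying the upper bounds on $|T|$ and $|TS^2V|$ yields a bound of the form $\eta_k^{a+m}|q_{i_k}|^{(2-\epsilon)a+m}$; using the defining inequality $3\ell m+(6\ell+1)n\ge 3\ell(6\ell+1)$ of $E_\ell$, a short direct computation shows that $(2-\epsilon)a+m\le 6\ell+2$ provided $\ell\ge L_0(\epsilon)$. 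In Case~B, I write $(S^2V)^{n}=(TS^2V)^{n}/T^{n}$ and dominate the negative power $|T|^{a-n}$ using the lower bound $|T|\gg X_i|q_i|$, obtaining a bound of the form $X_{i_k}^{a-n}\eta_k^{m+n}|q_{i_k}|^{d-m-(2+\epsilon)n}$; the inequality $m+2n\ge 6\ell$ on $E_\ell$ (built into its definition) yields $d-m-(2+\epsilon)n\le 6\ell+2$, and $X_{i_k}^{a-n}\le 1$ since $a<n$. In both cases the exponent of $\eta_k$ is at least $1$, since $(0,0)\notin E_\ell$ and $a+m=d-m-3n\ge 1$ on $\cT_d\cap\bN^2$.

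Summing over the finitely many monomials of $P_\ell$ therefore gives $|P_\ell(\ux_{i_k},\ux_{j_k})|\le C_\ell\,\eta_k\,|q_{i_k}|^{6\ell+2}$, which is strictly less than $|q_{i_k}|^{6\ell+2}$ once $\eta_k<1/C_\ell$; the divisibility then forces $P_\ell(\ux_{i_k},\ux_{j_k})=0$. Applying this with $\ell=\ell_1$ and $\ell=\ell_2$ yields infinitely many common zeros $(\ux_i,\ux_j)$ of the coprime polynomials $P_{\ell_1}$ and $P_{\ell_2}$, contradicting Proposition~\ref{search:prop:remark}. The principal obstacle is Case~B: with only $|T_i|\ge 1$, the resulting exponent $(2-\epsilon)n+m$ on $|q_{i_k}|$ exceeds $6\ell+2$ at points like $(1,4\ell)$ once $\epsilon<1/2$; it is the lower bound $|T_i|\gg X_i|q_i|$ that rescues the argument.
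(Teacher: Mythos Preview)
Your argument is correct and follows the same overall strategy as the paper: pick two values of $\ell$, use the irreducibility of the $P_\ell$'s from Theorem~\ref{special:thm} to make them coprime, and invoke Proposition~\ref{search:prop:remark} to force at least one of $P_\ell(\ux_i,\ux_j)$, $P_{\ell+1}(\ux_i,\ux_j)$ to be non-zero for large $i$. The difference lies entirely in how the bound $|P_\ell(\ux_i,\ux_j)|<|q_i|^{6\ell+2}$ is obtained. The paper does not argue by contradiction and does not split into cases: since $E_\ell$ is the set of integer points in the triangle with vertices $(6\ell+1,0)$, $(0,3\ell)$, $(0,4\ell+2/3)$, every monomial value $|T_i|^{d-2m-3n}|F_i|^m|S_i^2V_i|^n$ is bounded by the maximum of the three corner values, giving
\[
 |P_\ell(\ux_i,\ux_j)|\ll |F_i|^{6\ell+1}+|T_i|^{3\ell+2}|S_i^2V_i|^{3\ell}+|S_i^2V_i|^{4\ell+2/3}.
\]
Lemma~\ref{search:lemma:oTcube} ($|S_i^2V_i|=o(|T_i|^3)$) absorbs the third term into the second, and then $|S_i^2V_i|\ge 1$ gives $|P_\ell|\ll |F_i|^{6\ell+1}+|T_iS_i^2V_i|^{3\ell+2}$ directly. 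Your Case~B accomplishes the same task --- controlling the monomials near the vertex $(0,4\ell+2/3)$ --- but via the lower bound $|T_i|\gg X_i|q_i|$ assembled from Propositions~\ref{prelim:prop:est_Phi} and~\ref{search:prop:qi} and Corollaries~\ref{prelim:cor:est_varphi},~\ref{prelim:cor:XjLj}. This works, but the paper's route is shorter: one convexity observation plus one ready-made lemma in place of your case split and chain of estimates.
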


\begin{proof}
Each $E_\ell$ is the set of integral points in the triangle with vertices $(6\ell+1,0)$, $(0,3\ell)$ and $(0,4\ell+(2/3))$. So, for consecutive $i<j$ in $I$, we have
\begin{align*}
 |P_\ell(\ux_i,\ux_j)|
   &\ll_{\xi,P} |F_i|^{6\ell+1} + |T_i|^{3\ell+2}|S_i^2V_i|^{3\ell} + |S_i^2V_i|^{4\ell+(2/3)}\\
   &\ll |F_i|^{6\ell+1} + |T_iS_i^2V_i|^{3\ell+2}
\end{align*}
where the second inequality uses the estimate $|S_i^2V_i|=o(|T_i|^3)$ from Lemma \ref{search:lemma:oTcube}.
If $P_\ell(\ux_i,\ux_j)\neq 0$, Proposition \ref{search:prop:divJk} also gives $|q_i|^{6\ell+2}\le |P_\ell(\ux_i,\ux_j)|$, thus
\begin{equation}
 \label{special:cor:eq}
 |F_i| \gg_{\xi,\ell} |q_i|^{(6\ell+2)/(6\ell+1)} \ge |q_i|
 \quad\text{or}\quad
 |T_iS_i^2V_i| \gg_{\xi,\ell} |q_i|^{(6\ell+2)/(3\ell+2)}.
\end{equation}
Choose $\ell$ to be the smallest positive integer such that $(6\ell+2)/(3\ell+2) \ge 2-\epsilon$. Since $P_\ell$ and $P_{\ell+1}$ are irreducible of distinct degrees, they are relatively prime and Proposition \ref{search:prop:remark} shows that at least one of them does not vanish at the point $(\ux_i,\ux_j)$ for $i$ sufficiently large.  Then \eqref{special:cor:eq} holds with the given value of $\ell$ or with $\ell$ replaced by $\ell+1$, and the result follows.
\end{proof}

As a first step towards the proof of Theorem \ref{special:thm}, we first note that the irreducibility of $P_\ell$ derives simply from the non-vanishing of its coefficients of indices $(6\ell+1,0)$ and $(0,3\ell)$.

\begin{lemma}
\label{special:lemma:irred}
With the notation of Theorem \ref{special:thm}, let $P$ be a non-zero polynomial of the form \eqref{special:thm:eq:P} with non-zero coefficients of indices $(6\ell+1,0)$ and $(0,3\ell)$.  Then $P$ is an irreducible element of $\cR$.
\end{lemma}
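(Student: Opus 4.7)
The plan is to view $P$ as a polynomial in $F$ with coefficients in the UFD $\bQ[T,G]$, where I abbreviate $G:=S^2V$, and to apply a Newton polygon argument with respect to the $G$-adic valuation $v_G$. Writing $P=\sum_m P_m(T,G)\,F^m$, three facts follow directly from the shape of $E_\ell$: the leading coefficient is $P_{6\ell+1}=a_{6\ell+1,0}\in\bQ^*$, since $(6\ell+1,0)$ is the unique element of $E_\ell$ with $n=0$; the constant-in-$F$ coefficient equals $P_0=\sum_{n=3\ell}^{4\ell}a_{0,n}T^{12\ell+2-3n}G^n$ and has $v_G(P_0)=3\ell$ by the hypothesis $a_{0,3\ell}\neq 0$; and for $0<m<6\ell+1$, the defining inequality $m/(6\ell+1)+n/(3\ell)\ge 1$ of $E_\ell$ forces every $(m,n)\in E_\ell$ to satisfy $n\ge 3\ell(6\ell+1-m)/(6\ell+1)$, so $v_G(P_m)$ lies on or above the line through $(0,3\ell)$ and $(6\ell+1,0)$. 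Thus the Newton polygon $N(P)$, formed from the points $\{(m,v_G(P_m))\}$, consists of a single edge of slope $-3\ell/(6\ell+1)$ joining these two endpoints.

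Now I suppose for contradiction that $P=QR$ is a nontrivial factorization in $\cR=\bQ[T,F,G]$. Matching leading $F$-coefficients over the domain $\bQ[T,G]$ shows $\mathrm{lc}_F(Q),\mathrm{lc}_F(R)\in\bQ^*$ and $u+v=6\ell+1$, where $u=\deg_F Q$ and $v=\deg_F R$. Since $P(T,0,G)\neq 0$, $F$ divides neither $P$ nor $Q$, so the $F$-constant term $Q_0$ is nonzero and $v_G(Q_0)$ is finite. By the standard multiplicativity of Newton polygons over a valued domain (applied to $\bQ[T,G]$ via the embedding into $\bQ(T)((G))$), the multiset of slopes of $N(P)$ is the union of those of $N(Q)$ and $N(R)$; since $N(P)$ has the single slope $-3\ell/(6\ell+1)$, the same must hold for $N(Q)$. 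Hence $N(Q)$ is a single segment from $(0,v_G(Q_0))$ to $(u,0)$ of slope $-3\ell/(6\ell+1)$, which gives $v_G(Q_0)=3\ell u/(6\ell+1)$.

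Since $6\ell+1-2\cdot 3\ell=1$ yields $\gcd(3\ell,6\ell+1)=1$, the integrality of $v_G(Q_0)$ forces $(6\ell+1)\mid u$, and together with $0\le u\le 6\ell+1$ this leaves only $u=0$ or $u=6\ell+1$. In either case, the weighted homogeneity of $Q$ (of weight $2u$) or of $R$ (of weight $2v$) forces one factor to lie in $\cR_0=\bQ$, contradicting nontriviality. The conceptual heart of the argument, and the observation around which everything pivots, is that the edge of $N(P)$ from $(0,3\ell)$ to $(6\ell+1,0)$ is \emph{primitive}: the coprimality $\gcd(3\ell,6\ell+1)=1$ is precisely what blocks any nontrivial Minkowski decomposition of $N(P)$. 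The verifications that I would check routinely are the $n_{\min}$ inequality from the definition of $E_\ell$, the scalar nature of $\mathrm{lc}_F(Q)$, and the Newton-polygon multiplicativity over $\bQ[T,G]$; I do not expect any of these to present a real obstacle.
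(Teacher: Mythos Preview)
Your argument is correct. The key insight---that the segment from $(6\ell+1,0)$ to $(0,3\ell)$ in the $(F,G)$-exponent plane is primitive because $\gcd(6\ell+1,3\ell)=1$, so it admits no nontrivial Minkowski decomposition---is exactly the same as the paper's, but the packaging differs. The paper does not invoke Newton polygons or their multiplicativity; instead it defines an \emph{index} $\imath(m,n)=m/(6\ell+1)+n/(3\ell)$ on monomials, observes that the part of $P$ of minimal index factors as the product of the minimal-index parts of any two factors $A,B$, and then notes that $\imath$ is \emph{injective} on the set of pairs $(m,n)$ with $2m+3n<12\ell+2$ (its kernel on $\bZ^2$ being generated by $(6\ell+1,-3\ell)$). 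Since proper factors have degree $<d$, their minimal-index parts are forced to be single monomials, contradicting the fact that the minimal-index part of $P$ contains the two monomials indexed by $(6\ell+1,0)$ and $(0,3\ell)$. Your route trades this bare-hands initial-form argument for a black-box appeal to Newton polygon multiplicativity over $\bQ(T)((G))$; in exchange you get a phrasing that readers will recognize instantly and that makes the role of primitivity explicit. One small typo: where you write ``$F$ divides neither $P$ nor $Q$'' you mean ``neither $Q$ nor $R$.''
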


\begin{proof}
Suppose on the contrary that $P$ is not irreducible.  Then it factors as a product $P=AB$ where $A$ and $B$ are homogeneous elements of $\cR$ of degree less than $d$.  For the present purpose, we define the index of each monomial $T^kF^m(S^2V)^n$ with $(k,m,n)\in\bN^3$ as $\imath(m,n)=m/(6\ell+1)+n/(3\ell)$.  We also define the index $\imath(Q)$ of an arbitrary non-zero element $Q$ of $\cR$ as the \emph{smallest} index of its monomials.  Then, we have $\imath(P)=\imath(A)+\imath(B)$ and the homogeneous part $P_0$ of $P$ of smallest index $\imath(P)$ is the product of the homogeneous parts $A_0$ of $A$ and $B_0$ of $B$ with smallest index.  However, the function $\imath$ is injective on the set of pairs $(m,n)\in\bN^2$ with $2m+3n<12\ell+2$ because its kernel on $\bZ^2$ is the subgroup generated by $(6\ell+1,-3\ell)$.  Thus $A_0$ and $B_0$ are monomials.  This is impossible because the hypothesis implies that $\imath(P)=1$ with $P_0$ involving the monomials associated with $(6\ell+1,0)$ and $(0,3\ell)$.
\end{proof}

The polynomials $F$, $G:=TS^2V$, $H:=S^4V^2$ are homogeneous elements of $\cR$ of respective degrees $2$, $4$, $6$.  They generate a graded subalgebra of $\cR$,
\[
 \cS:=\bQ[F,G,H]=\bigoplus_{\ell=0}^\infty \cS_{2\ell},
\]
where $\cS_{2\ell}=\cS\cap\cR_{2\ell}$ admits, as a basis, the products $F^kG^mH^n$ with  $(k,m,n)\in\bN^3$ satisfying $k+2m+3n=\ell$.  In particular, we have
\begin{equation}
 \label{special:eq:dimS}
 \dim_\bQ\cS_{2\ell}=\tau(\ell).
\end{equation}
Moreover, the formulas \eqref{search:eq:MN} show that $\cS=\bQ[F,M,N]$.  As $F$, $M$ and $N$ are homogeneous of respective degrees $2$, $4$ and $6$, it follows that the products $F^kM^mN^n$ with $(k,m,n)\in\bN^3$, $k+2m+3n=\ell$, form another basis of $\cS_{2\ell}$ over $\bQ$.  The connection with the current situation is the following.

\begin{lemma}
\label{specvail:lemma:decomp}
With the notation of Theorem \ref{special:thm}, the set of polynomials of the form \eqref{special:thm:eq:P} constitutes the vector space $\cV_\ell:=\cS_{12\ell+2} \oplus \langle G^{3\ell}T^2 \rangle_\bQ$.
\end{lemma}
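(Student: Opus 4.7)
The approach is to show that $\cV_\ell$ coincides with the subspace of $\cR_d$ spanned by the monomials $T^{d-2m-3n} F^m (S^2V)^n$ with $(m, n) \in E_\ell$, by producing a basis of $\cV_\ell$ that maps bijectively onto those monomials via the inclusion $\cV_\ell \hookrightarrow \cR_d$.

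First I would verify that $F, G, H$ are algebraically independent over $\bQ$: $F$ lies in the subring $\bQ[T, F]$ while $G, H$ lie in the complementary subring $\bQ[T, S^2V]$, and within the latter the map $\bQ[y, z] \to \bQ[T, S^2V]$ sending $y \mapsto T\cdot S^2V$, $z \mapsto (S^2V)^2$ is injective because the exponent map $(i, j) \mapsto (i, i+2j)$ is. Hence $\cS_{12\ell+2}$ has basis $\{F^k G^m H^n : k + 2m + 3n = 6\ell+1\}$. Since $G = TS^2V$ and $H = (S^2V)^2$, each such basis element expands to the single monomial $T^m F^k (S^2V)^{m+2n}$ of $\cR_d$, with index $(b, c) = (k, m+2n)$. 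The substitution $(k, m, n) \mapsto (k, m+2n)$ is a bijection onto $\tilde E_\ell := \{(b, c) \in \cT_d : b + 2c \ge 6\ell+1\}$, with explicit inverse $(b, c) \mapsto (b,\, 12\ell+2 - 2b - 3c,\, b + 2c - 6\ell - 1)$. Thus $\cS_{12\ell+2}$ equals the span of those basis monomials of $\cR_d$ indexed by $\tilde E_\ell$.

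Next, the monomial $G^{3\ell} T^2 = T^{3\ell+2}(S^2V)^{3\ell}$ has index $(0, 3\ell)$, which does not lie in $\tilde E_\ell$ because $0 + 2\cdot 3\ell = 6\ell < 6\ell+1$. Hence the sum $\cS_{12\ell+2} + \langle G^{3\ell} T^2 \rangle_\bQ$ is a genuine direct sum, and $\cV_\ell$ equals the span of the basis monomials of $\cR_d$ indexed by $\tilde E_\ell \cup \{(0, 3\ell)\}$.

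The remaining and most delicate step is the purely combinatorial identity $E_\ell = \tilde E_\ell \cup \{(0, 3\ell)\}$. The inclusion $\tilde E_\ell \cup \{(0, 3\ell)\} \subseteq E_\ell$ is routine: for $(b, c) \in \tilde E_\ell$, substituting $b \ge 6\ell+1 - 2c$ into $3\ell b + (6\ell+1) c$ produces the lower bound $3\ell(6\ell+1) + c \ge 3\ell(6\ell+1)$, which (after clearing denominators) is the defining inequality of $E_\ell$; and $(0, 3\ell)$ lies in $E_\ell$ by direct check. For the reverse inclusion I would pick $(m, n) \in E_\ell$ with $m + 2n \le 6\ell$ and set $k := 6\ell+1-m-2n \ge 1$. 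Eliminating $m$ from the defining inequality of $E_\ell$ gives $n \ge 3\ell k$, while $2m + 3n \le 12\ell+2$ gives $n \le (6\ell+1-k)/2$. For $k = 1$ these pin down $n = 3\ell$ and $m = 0$; for $k \ge 2$ they are incompatible since $3\ell k \ge 6\ell > (6\ell+1-k)/2$. Geometrically, the sliver between the lines $m/(6\ell+1) + n/(3\ell) = 1$ and $m + 2n = 6\ell+1$ has width strictly less than $1$ in $n$ and contains the single lattice point $(0, 3\ell)$; this lattice-point count is the main obstacle to handle.
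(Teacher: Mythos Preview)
Your approach is essentially the same as the paper's: both identify $\cS_{12\ell+2}$ with the span of the monomials $T^{d-2m-3n}F^m(S^2V)^n$ indexed by $\{(m,n)\in\cT_d : m+2n\ge 6\ell+1\}$ (your $\tilde E_\ell$, the paper's $\cT_{12\ell+2}^*$), and then verify the combinatorial identity $E_\ell=\tilde E_\ell\cup\{(0,3\ell)\}$, which the paper simply asserts and you prove in detail.

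One small slip: in the reverse inclusion you write that $2m+3n\le 12\ell+2$ gives $n\le(6\ell+1-k)/2$. It does not; substituting $m=6\ell+1-k-2n$ into $2m+3n\le 12\ell+2$ yields only the trivial $n\ge -2k$. The bound $n\le(6\ell+1-k)/2$ you need comes instead from $m\ge 0$. With that correction your argument goes through unchanged.
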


\begin{proof}
For any $\ell\in \bN$, the vector space $\cS_{2\ell}$ is generated by the products $T^kF^m(S^2V)^n$ with $(k,m,n)\in\bN^3$ satisfying $k+2m+3n=2\ell$ and $k\le n$.  So, equivalently, it is generated by the products $T^{2\ell-2m-3n}F^m(S^2V)^n$ with $(m,n)\in\bN^2$ satisfying $2m+3n\le 2\ell$ and $\ell \le m+2n$.  Let $\cT_{2\ell}^*$ denote this subset of $\bN^2$.  Then, the conclusion follows by observing that we have $E_\ell = \cT_{12\ell+2}^* \cup \{(0,3\ell)\}$.
\end{proof}

The proof of the next result is very similar to that of Theorem \ref{search:thm:dim}, based on the fact that $M$ and $N$ have respective $J$-valuations $2$ and $3$, so we omit its proof.

\begin{lemma}
\label{special:lemma:dimS}
For each choice of integers $k,\ell\ge 0$, we have
\[
 \dim_\bQ \cS_{2\ell} = \tau(\ell)
 \et
 \dim_\bQ \frac{\cS_{2\ell}}{\cS_{2\ell}\cap J^{(k)}}
  = \begin{cases}
     \tau(k-1) &\text{if $k\le \ell$,}\\
     \tau(\ell) &\text{if $k > \ell$.}
    \end{cases}
\]
Moreover, if $k\le \ell$, a basis of $\cS_{2\ell}\cap J^{(k)}$ over $\bQ$ is given by the products $F^{\ell-2m-3n}M^mN^n$ with $(m,n)\in\bN^2$ satisfying $k\le 2m+3n\le \ell$.
\end{lemma}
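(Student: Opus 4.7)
The plan is to mirror the proof of Theorem~\ref{search:thm:dim} line by line, replacing the triple $(T, A, B)$ by $(F, M, N)$ and $\cR_\ell$ by $\cS_{2\ell}$. The polynomial change of variables $(F, G, H) \leftrightarrow (F, M, N)$ given by
\[
 G = \frac{F^2 - M}{3}, \qquad H = \frac{-5F^3 + 6 F M - N}{135}
\]
is invertible over $\bQ$, so the algebraic independence of $F, G, H$ (implicit in \eqref{special:eq:dimS}) transfers to $F, M, N$, and since $F, M, N$ have respective degrees $2, 4, 6$ in $\cS$, the products $F^{\ell-2m-3n} M^m N^n$ indexed by $(m,n) \in \cT_\ell$ form a basis of $\cS_{2\ell}$. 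This proves the first equality.

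The additivity of $v_J$ together with $v_J(F) = 0$, $v_J(M) = 2$, $v_J(N) = 3$ gives $v_J(F^{\ell-2m-3n} M^m N^n) = 2m+3n$, so every product with $2m+3n \ge k$ lies in $\cS_{2\ell} \cap J^{(k)}$. This already yields the upper bound on the quotient dimension and reduces the lemma to proving linear independence modulo $J^{(k)}$ of the $\tau(k-1)$ remaining products (when $k \le \ell$). Assume on the contrary that $P = \sum_{2m+3n<k} a_{m,n} F^{\ell-2m-3n} M^m N^n$ lies in $J^{(k)}$ with not all $a_{m,n}$ equal to zero, and let $r = \min\{2m+3n \,;\, a_{m,n} \neq 0\} < k$. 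The higher-order part of $P$ lies in $J^{(r+1)}$, so
\[
 P_0 := \sum_{2m+3n=r} a_{m,n} F^{\ell-r} M^m N^n \in J^{(r+1)}.
\]

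The crucial step is to compute $\rho(P_0) \bmod q^{r+1}$. Substituting $F = (4A - T^2)/3$ and $S^2V = (T^3 - 3TA - B)/27$ into the definitions of $M$ and $N$ yields
\[
 9M = 16 A^2 - 5 T^2 A + T B, \qquad 27 N = 64 A^3 - 21 T^2 A^2 - 6 TAB + 4 T^3 B - 5 B^2.
\]
Applying $\rho(T) = 3S + qT$, $\rho(A) = q^2 A$, $\rho(B) = q^3 B$ to these expressions and isolating the leading $q$-contribution gives $\rho(M) \equiv -5 q^2 S^2 A \bmod q^3$ and $\rho(N) \equiv 4 q^3 S^3 B \bmod q^4$; combined with $\rho(F) \equiv -3 S^2 \bmod q$, we obtain
\[
 \rho(P_0) \equiv q^r (-3 S^2)^{\ell-r} \sum_{2m+3n=r} a_{m,n} (-5 S^2 A)^m (4 S^3 B)^n \bmod q^{r+1}.
\]
Since $P_0 \in J^{(r+1)}$, this sum must vanish in $\bQ[\ux, \uy]$. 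As $S\neq 0$ and $A, B$ are algebraically independent over $\bQ$ (being among the generators of the polynomial ring $\cR = \bQ[T, A, B]$), the monomials $S^{2m+3n} A^m B^n$ with $2m+3n = r$ are $\bQ$-linearly independent, forcing every $a_{m,n}$ with $2m+3n=r$ to vanish and contradicting the choice of $r$. Hence $r \ge k$, which is what was needed.

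The main obstacle I anticipate is the bookkeeping behind the identification of the leading coefficients $\rho(M)/q^2|_{q=0} = -5 S^2 A$ and $\rho(N)/q^3|_{q=0} = 4 S^3 B$. A direct expansion from $M = F^2 - 3TS^2V$ and $N = F^3 - 18 TFS^2V - 135 S^4 V^2$ requires tracking terms up to orders $q^2$ and $q^3$ with considerable cancellation (in particular the $T^6$ and $T^4 A$ terms of $27N$ must cancel exactly, which is precisely the certificate that $N \in J^{(3)}$). Rewriting $M$ and $N$ in the $(T, A, B)$-basis of $\cR$ before applying $\rho$ collapses this to a one-line calculation and is probably the cleanest route.
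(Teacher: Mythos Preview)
Your proof is correct and follows precisely the approach the paper indicates: the paper omits the proof entirely, stating only that it is ``very similar to that of Theorem~\ref{search:thm:dim}, based on the fact that $M$ and $N$ have respective $J$-valuations $2$ and $3$''. Your explicit computation of the leading terms $\rho(M)\equiv -5q^2S^2A$ and $\rho(N)\equiv 4q^3S^3B$ via the $(T,A,B)$-expressions for $M$ and $N$ is exactly the right way to make this precise, and the linear-independence argument then goes through verbatim as in Theorem~\ref{search:thm:dim}.
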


\begin{proof}[Proof of Theorem \ref{special:thm}]
In view of Lemmas \ref{special:lemma:irred} and \ref{specvail:lemma:decomp}, we need to show that $\cV_\ell \cap J^{(6\ell+2)}$ is one-dimensional generated by a polynomial $P_\ell$ whose coefficients of $F^{6\ell+1}$ and $G^{3\ell}T^2$ are both non-zero.

By the formulas of Lemma \ref{special:lemma:dimS}, we have $\cS_{2\ell}\cap J^{(\ell+1)}=\{0\}$ for each $\ell \ge 0$.  In particular, this gives $\cS_{12\ell+2}\cap J^{(6\ell+2)}=\{0\}$, and so $\dim_\bQ(\cV_\ell\cap J^{(6\ell+2)})\le 1$.  Moreover, since $\cV_\ell$ has dimension $\tau(6\ell+1)+1$, Theorem \ref{search:thm:dim} implies that it contains a non-zero element of $J^{(6\ell+2)}$.  From this, we conclude that $\cV_\ell\cap J^{(6\ell+2)}$ is one-dimensional generated by a polynomial $P_\ell$ outside of $\cS_{12\ell+2}$.  So, the coefficient of $G^{3\ell}T^2$ in $P_\ell$ is non-zero, and it remains to show the same for the coefficient of $F^{6\ell+1}$.

For this purpose, we note that, since $HT^2=G^2$, we have $H\cV_\ell \subseteq \cS_{12\ell+8}$ and thus $HP_\ell$ belongs to $\cS_{12\ell+8} \cap J^{(6\ell+2)}$.  By Lemma \ref{special:lemma:dimS}, the latter vector space
is generated by the products $F^{6\ell+4-2m-3n}M^mN^n$ with $(m,n)\in\bN^2$ satisfying $6\ell+2\le 2m+3n \le 6\ell+4$,  thus
\begin{equation}
\label{special:thm:eq:HP}
 HP_\ell =
 \sum_{k=0}^\ell (r_kFN + s_kF^2M + t_kM^2)M^{3k}N^{2\ell-2k}
\end{equation}
with $r_k,s_k,t_k\in \bQ$ ($0\le k\le \ell$) not all zero.  In order to fix the choice of $P_\ell$ up to multiplication by $\pm 1$, we request that these coefficients are relatively prime integers.  Since $P_\ell \in \cS_{12\ell+2}+\langle G^{3\ell}T^2 \rangle_\bQ$, they must satisfy
\[
 \sum_{k=0}^\ell (r_kFN + s_kF^2M + t_kM^2)M^{3k}N^{2\ell-2k} \equiv a G^{3\ell+2} \mod H,
\]
for some $a\in\bZ$.  A priori, this is a congruence in $\cS$ but we may view it as a congruence in $\bZ[F,G,H]$ because both sides belong to that ring.  Let $(2,H)$ denote the ideal of $\bZ[F,G,H]$ generated by $2$ and $H$.  Since $M$ and $N$ are respectively congruent to $F^2+G$ and $F^3$ modulo $(2,H)$, this yields
\begin{align*}
 \sum_{k=0}^\ell
   \Big(r_kF^{6\ell-6k+4}(F^2+G&)^{3k}
       + s_kF^{6\ell-6k+2}(F^2+G)^{3k+1}\\
       &+ t_kF^{6\ell-6k}(F^2+G)^{3k+2}\Big) \equiv aG^{3\ell+2} \mod (2,H).
\end{align*}
Substituting $G+F^2$ for $G$ in this congruence, it becomes
\begin{align*}
 \sum_{k=0}^\ell
   \Big(r_kF^{6\ell-6k+4}G^{3k}
       &+ s_kF^{6\ell-6k+2}G^{3k+1}\\
       &+ t_kF^{6\ell-6k}G^{3k+2}\Big) \equiv a(G+F^2)^{3\ell+2} \mod (2,H),
\end{align*}
which by comparing coefficients on both sides yields
\[
 r_k\equiv a\binom{3\ell+2}{3k},\quad
 s_k\equiv a\binom{3\ell+2}{3k+1},\quad
 t_k\equiv a\binom{3\ell+2}{3k+2}
 \mod 2.
\]
As $r_k,s_k,t_k$ are not all even, $a$ must be odd and the above congruences determine the parity of all coefficients. We also observe that $M$ and $N$ are respectively congruent to $F^2$ and $F^3+H$ modulo $(2,G)$, and so
\[
 HP_\ell
 \equiv
 \sum_{k=0}^\ell (r_k(F^3+H) + s_kF^3 + t_kF^3)F^{6k+1}(F^3+H)^{2\ell-2k} \mod (2,G).
\]
In particular, the coefficient $b$ of $F^{6\ell+1}$ in $P_\ell$ is an integer with
\[
 b
 \equiv \sum_{k=0}^\ell \big(r_k(2\ell-2k+1) + (s_k+t_k)(2\ell-2k)\big)
 \equiv \sum_{k=0}^\ell r_k
 \mod 2.
\]
On the other hand, it is known that $\sum_{k=0}^\ell \binom{3\ell+2}{3k}=(2^{3\ell+2}-(-1)^\ell)/3$. Thus, $b$ is odd and therefore non-zero.
\end{proof}

%
%

\end{document}